\newcommand{\N}{{\mathbb N}}
\newcommand{\Z}{{\mathbb Z}}
\newcommand{\C}{{\mathbb C}}
\newcommand{\R}{{\mathbb R}}
\renewcommand{\P}{{\mathbb P}}
\newcommand{\AAA}{{\mathcal A}}
\newcommand{\DD}{{\mathcal D}}
\newcommand{\KK}{{\mathcal K}}
\newcommand{\OO}{{\mathcal O}}
\newcommand{\QQ}{{\mathcal Q}}
\newcommand{\TT}{{\mathcal T}}
\newcommand{\UU}{{\mathcal U}}
\newcommand{\VV}{{\mathcal V}}
\newcommand{\ddd}{{\rm d}}
\newcommand{\www}{\widetilde}
\newcommand{\wwh}{\widehat}
\newcommand{\oooo}{\overline}
\newcommand{\uuuu}{\underline}
\newcommand{\iiii}{\infty}
\newcommand{\nmmm}{{\{0\}\times M}}
\newcommand{\immm}{{\{\infty\}\times M}}
\newcommand{\cmmm}{{\C\times M}}
\newcommand{\csmmm}{{\C^*\times M}}
\newcommand{\pmmm}{{\P^1\times M}}
\newcommand{\oomm}{\Omega}
\newcommand{\nnn}{\nabla}
\DeclareMathOperator{\End}{End}
\DeclareMathOperator{\id}{id}
\DeclareMathOperator{\rank}{rank}
\theoremstyle{plain}
\newtheorem{lemma}{Lemma}[section]
\newtheorem{theorem}[lemma]{Theorem}
\newtheorem{proposition}[lemma]{Proposition}
\newtheorem{corollary}[lemma]{Corollary}
\newtheorem{conjecture}[lemma]{Conjecture}
\theoremstyle{definition}
\newtheorem{definition}[lemma]{Definition}
\newtheorem{remark}[lemma]{Remark}
\newtheorem{remarks}[lemma]{Remarks}
\newtheorem{example}[lemma]{Example}
\begin{document}

\title[Hermitian metrics on $F$-manifolds]
{Hermitian metrics on $F$-manifolds}

\author[L. David and C. Hertling]{Liana David and Claus Hertling}

\address{Institute of Mathematics Simion Stoilow
of the Romanian Academy, Research Unit 4, Calea
Grivitei no. 21, Sector 1, Bucharest, Romania}
\email{liana.david@imar.ro}

\address{Lehrstuhl f\"ur Mathematik VI,
Universit\"at Mannheim, A5,6, 68131 Mannheim, Germany}
\email{hertling\char64 math.uni-mannheim.de}

\thanks{This work was supported by the
Alexander von Humboldt Foundation and the DFG grant He2287/4-1
(SISYPH). Partial financial support from a CNCS-UEFISCDI grant
PN-II-ID-PCE-2011-3-0362 is also acknowledged.}

\keywords{$F$-manifold, meromorphic connection, Frobenius manifold,
$tt^*$ geometry, hermitian metric, hyperbolicity}

\subjclass[2010]{34M56, 53C07, 53B35, 32Q45}

\date{April 05, 2016}


\begin{abstract}
An $F$-manifold is complex manifold with a multiplication
on the holomorphic tangent bundle, which satisfies a certain integrability condition.
Important examples are Frobenius manifolds and especially
base spaces of universal unfoldings of isolated hypersurface
singularities. This paper reviews the construction
of hermitian metrics on $F$-manifolds from $tt^*$ geometry.
It clarifies the logic between several notions.
It also introduces a new {\it canonical} hermitian metric.
Near irreducible points it makes the manifold almost hyperbolic.
This holds for the singularity case and will hopefully lead
to applications there.
\end{abstract}

\maketitle


\tableofcontents


\section{Introduction}\label{s1}
\setcounter{equation}{0}

The notion of an $F$-manifold was introduced in the literature by Hertling and
Manin \cite{HM99} as a complex manifold  $M$  with a (fiber preserving)  associative, commutative, with unit field multiplication on $TM$, which satisfies a certain integrability
condition. 
There are various motivations to define and study this geometrical structure:  it is closely related to the notion of a Frobenius manifold, 
defined  by B. Dubrovin in 1991, as a geometrization of the so-called WDVV
(Witten-Dijkgraaf-Verlinde-Verlinde) equations. $F$-manifolds arise  naturally in the theory of 
meromorphic connections (and this is the view-point we shall adopt in this paper). 
Examples can be found  in integrable systems
and quantum cohomology as well.  This paper has three main purposes:

\medskip

(a)  Review the relation between $F$-manifolds (and stronger
structures, namely Saito and CV-structures) on the one hand, and meromorphic  connections on the other.
Review the existing notions
from  \cite{He03,Sa08,Li11,LS12}  (and sketch the results) of hermitian
metrics on $F$-manifolds and clarify their logic.

\medskip

(b)  Improve Theorem 4.5 in \cite{He03} and thus simplify
the definition of a $CDV$-structure there.

\medskip

(c) Introduce a new {\sf canonical hermitian metric } $h^{M}$  on
$M$, starting with a $CV$-structure on an abstract bundle $K \to M$, 
with the so-called unfolding condition (Definition \ref{t3.2}). 
Show that $h^{M}$ makes $M$ almost hyperbolic near points $t\in M$ where
$(T_tM,\circ_{t}, e_{t})$ is irreducible.

\medskip

The paper is structured as follows.   Section \ref{s2} is, in large part, an overview of the theory of certain meromorphic connections and the geometrical data they induce on the parameter space. We are interested in three classes of  connections: $(TEP)$-structures,
$(TLEP)$-structures and $(TERP)$-structures (see e.g.
\cite{He03}). A $(TEP)$-structure is a meromorphic connection $\nabla$ on a
holomorphic vector bundle
$H \to \mathbb{C}\times M$, with poles of 
Poincar\'e rank one along $\{ 0\}\times M$, and  a (holomorphic) flat pairing $P$, satisfying a set of natural conditions (see Definition \ref{t2.1}).
A $(TLEP)$-structure is essentially a $(TEP)$-structure which  extends  to $\mathbb{P}^{1}\times M$ and has  logarithmic poles along 
$ \{\infty\}\times M$. A $(TERP)$-structure is essentially  a $(TEP)$-structure together with a compatible real structure. 
In the first part of this section we  define 
 these notions, we discuss the (formal) isomorphy of semisimple $(TEP)$-structures and we recall how a (pure) $(TLEP)$-structure  and a (pure) $(TERP)$-structure on a bundle 
 $H\to \P^{1} \times M$
gives rise to a Saito structure $(K,C, \UU , g, \nabla^{r}, \VV )$,  respectivey to a 
CV-structure $(K,C, \UU , \kappa , h, D, \QQ )$, where  $K:= H\vert_{\{ 0\} \times M}$ (the definitions of Saito and CV-structures are recalled in
Definitions \ref{t2.8} and \ref{t2.10}).  Our original contribution in this section lies in Subsection \ref{s2.4}, where we consider 
a  Saito structure and a CV-structure, with the same underlying bundle, which share the same data $(C, \UU  ,g)$, and we ask to which extent they are isomorphic.  
In Definition \ref{var-equiv}  we propose various types of isomorphisms, in terms of the associated $(TEP)$-structures, and we discuss two particular cases in more 
detail -  the semisimple case and the conditions leading to the notion of a harmonic potential real Saito structure, introduced by Sabbah in \cite{Sa08} (see Definition \ref{harm}).

In Section \ref{s3} we study the geometry induced on the base space $M$
of an abstract holomorphic vector bundle $K \to M$, which underlies 
a Saito or a CV-structure, with the unfolding condition. By definition, 
the unfolding condition holds if there is  a (suitably chosen) holomorphic section 
$\zeta$ of $K$ (called a primitive section) such that
the map $TM \ni X \to - C_{X}(\zeta )\in K$ is an isomorphism, where $C$ is the Higgs field of the Saito or CV-structure. Using this isomorphism,  
the tangent bundle $TM$ inherits the data from $K$. The manifold $M$ becomes, in both cases
(i.e. when $K$ underlies a Saito or a CV-structure), 
an $F$-manifold with Euler field (see Definition \ref{t3.1} and Lemma \ref{t3.3}). Frobenius manifolds are recalled in Definition \ref{t3.6}. 
They arise from Saito structures. In particular, the  tangent bundle of any Frobenius manifold underlies a Saito structure.   
CV-structures whose underlying bundles are  tangent bundles  were studied in detail in \cite{He03},
where  the notion of a CDV-structure (a unification of a Saito and a CV-structure on a tangent bundle, with certain compatibility conditions, motivated by singularity theory), was introduced.
Here we prove Lemma \ref{help} and Theorem \ref{t3.8}, which are new.
Lemma \ref{help} shows that the operator $\QQ$ of a CV-structure
$(K,C, \UU , \kappa , h, D, \QQ )$,  for which the unfolding condition holds, is uniquely determined by 
$(K,C, \UU , h,D)$. Using this fact,  in Theorem \ref{t3.8}  we determine the expression of $\QQ$
when $K = TM.$ This leads to a simpler formulation of the 
defining conditions for a CDV-structure (see Definition \ref{t3.9} and 
Remarks \ref{t3.10}).

In Section  \ref{s4} we define the so-called {\sf canonical data}  on the base space $M$ of a CV-structure $(K\rightarrow M, C, \UU , \kappa , h, D, \QQ )$ with the unfolding 
condition   (see Definition \ref{t4.1}).
It consists of the system $(\circ , E, h^{M}, g^{M},\QQ^{M})$, formed by the multiplication $\circ$ and Euler field $E$ 
of the induced $F$-manifold structure on $M$  (see Lemma \ref{t3.3} mentioned above),  a hermitian (assumed to be non-degenerate, but not necessarily positive definite) 
metric $h^{M}$, a complex bilinear
(possibly degenerate) form $g^{M}$ and an endomorphism $\QQ^{M}.$ While in the previous sections (and in other references from the literature) the geometry of $M$
was inherited from $K$ using  the isomorphism $TM \ni X \to - C_{X} (\zeta )\in K$ determined by the choice 
of a primitive section, the system  $(\circ , E, h^{M}, g^{M},\QQ^{M})$ is independent of such a choice (this is the reason why it is called canonical).
The canonical data is obtained by identifying 
$TM$ with $F:= \{ C_{X},\ X\in TM\}$ 
(via the isomorphism  $X \to - C_{X}$)  and by considering $F$ as a subbundle of  
$\mathrm{End}(K)$ with induced structures. The geometry of $F$ is described in Lemmas 
\ref{t4.4} and \ref{t4.5}. Various compatibility relations between the tensors from the canonical data are collected in Proposition \ref{canonical-TM}.
In Theorem \ref{t4.6} we prove our main result about the canonical data, namely,  that if $h^{M}$ is positive definite, then  
it has non-positive holomorphic sectional
curvature and  the only obstruction
to be complex hyperbolic is the unit field $e$ of $M$.
In the main family of examples which we have in mind, $M$ is the base space
of a universal unfolding of an isolated hypersurface singularity.
We conclude with remarks on this case.

\section{$(TEP)$-structures and richer structures on
abstract bundles}\label{s2}
\setcounter{equation}{0}

Let us fix our  notation. 
Throughout the paper, $w$ is an integer and
$M$ is a complex manifold.
The sheaf of holomorphic 
functions on $M$ is ${\mathcal O}_{M}$, the sheaf of
holomorphic vector fields  is $\TT_M$,
the sheaf of $C^\iiii$ vector fields is
$\TT^\C_M=\TT^{1,0}_M\oplus\TT^{0,1}_M$. Similarly, 
the sheaf of holomorphic $k$-forms  is $\Omega^k_M$
and the sheaf of smooth $(p,q)$-forms is  $\Omega^{p,q}_{M}.$ 
We  denote by $L_{X}$ the Lie derivative on various tensor bundles on $M$, in the direction of the vector field $X\in {\mathcal T}_{M}$.   
When needed, vector fields on $M$, $\C$ or $\P^{1}$ will be identified (without stating explicitly) 
with their natural lifts to 
$\mathbb{C}\times M$ or $\mathbb{P}^{1}\times M$.
The coordinate on $\C$ is called $z$. 
The vector field $z\partial_{z}$ will be denoted by $U$.
We denote by $\oomm^1_{\C\times M} (\log(\nmmm))$ the sheaf of
holomorphic $1$-forms on $\C^{*}\times M$, which are logarithmic
along $\{ 0\} \times M$. Any such $1$-form is given locally by $\omega =
\frac{f_{0}}{z} dz + \sum_{i}f_{i}dt^{i}$, where $(t^{i})$ are
local coordinates on $M$ and $f_{i}\in {\mathcal O}_{\C \times
M}.$ 

For a holomorphic vector bundle $K$, we denote by $\mathcal O (K)$ the sheaf of holomorphic sections, by
$\Gamma (K)$ the sheaf of $C^{\infty}$-sections and by $\Omega^{k}(K)$
and $\Omega^{p,q} (K)$,  the sheaf of $K$-valued holomorphic $k$-forms, respectively, the sheaf of
$K$-valued smooth $(p,q)$-forms.  Always $p:\P^1\times M\to M$ is the natural projection and 
\begin{eqnarray}\label{2.1}
j:\P^1\times M\to \P^1\times M,\quad (z,t)\mapsto (-z,t).
\end{eqnarray}
A hermitian metric is a sesquilinear (= linear $\times$ conjugate
linear) hermitian non-degenerate form, but possibly with signature.
Often  a vector bundle $K$ will come with a non-degenerate complex bilinear
form $g$ or  with a non-degenerate hermitian metric $h$. We denote by $A^{*}$
and $A^{\flat}$ the $g$-adjoint, respectively the $h$-adjoint, of
any $A\in \mathrm{End}(K).$

\subsection{$(TEP)$-structures}\label{s2.1}

\begin{definition}\label{t2.1}
(a) Let $H\to\C\times M$ be a holomorphic vector bundle
and $\nnn$  a flat holomorphic  connection on the restriction
of $H$ to $\csmmm$. The connection $\nabla$ has {\sf a pole of Poincar\'e rank
$r\in\Z_{\geq 0}$} along $\nmmm$ if
\begin{eqnarray}\label{2.2}
\nnn:\OO(H)\to\frac{1}{z^r}\left(\oomm^1_{\C\times M}
(\log(\nmmm))\right)\otimes \OO(H).
\end{eqnarray}
A pole of Poincar\'e rank 0 is called a {\sf logarithmic pole}.

\medskip
(b) \cite[Definition 2.12]{He03}
A {\sf $(TE)$-structure} on $M$ is a tuple $(H\to\C\times M,\nnn)$ as in (a)
with a pole of Poincar\'e rank 1 along $\{ 0\} \times M.$

\medskip
(c) \cite[Definition 2.12]{He03} A {\sf $(TEP)$-structure} on $M$ of weight $w\in\Z$
(short: $(TEP)(w)$-structure) and rank $\mu$ is a tuple $(H\to \C\times
M,\nnn,P,w)$ such that $\mu=\rank H$, $(H\to\C\times M,\nnn)$ is a
$(TE)$-structure  and
\begin{eqnarray}\label{2.3}
P_{(z,t)}: H_{(z,t)}\times H_{(-z,t)}\to\C ,\ (z,t)\in \csmmm,
\end{eqnarray}
is a bilinear non-degenerate map which satisfies the following
conditions:

(i) it is $(-1)^{w}$-symmetric, i.e. for any  $(z,t)\in \C^*\times M$,
$$
P_{(z,t)}(a, b)= (-1)^{w} P_{(-z,t)}(b, a), \ a\in H_{(z,t)},\ b\in H_{(-z,t)}.
$$

(ii) for any $s, \tilde{s}\in {\mathcal O}(H)$,
$$
P(s, \tilde{s}):\C \times M \rightarrow \mathbb{C},\quad P(s,
\tilde{s}) (z,t):= P_{(z,t)} (s_{(z,t)}, \tilde{s}_{(-z,t)})
$$
belongs to $z^{w} {\mathcal O}_{\C \times M}$, 
and the maps 
\begin{equation}\label{p-concrete}
z^{-w} P_{(z,t)}: H_{(z,t)} \times H_{(-z,t)}\rightarrow \mathbb{C},\ (z,t)\in \C^{*}\times M
\end{equation}
extend to a non-degenerate pairing on $H_{(0, t)}$, for any $t\in M.$

(iii) $P$ is $\nabla$-flat, i.e.   for any $s, \tilde{s}\in {\mathcal O}(H)$ and $X\in {\mathcal T}_{M}$,
\begin{eqnarray*}
X (P(s, \tilde{s}) )= P ( \nabla_{X}(s),\tilde{s}) +
P (s, \nabla_{X}(\tilde{s}))\\
U (P(s, \tilde{s}) ) = P (\nabla_{U}(s), \tilde{s}) +
P (s,\nabla_{U}(\tilde{s})).
\end{eqnarray*}
\end{definition}

The non-degeneracy conditions from the above definition can be
written shortly by asking that  the map
\begin{eqnarray}\label{2.4} P:\OO(H)\otimes j^*\OO(H)\to
z^w\OO_\cmmm ,\ (s, j^{*}(\tilde{s}))\to P(s,\tilde{s})
\end{eqnarray}
is a non-degenerate pairing (non-degenerate along $\nmmm$ means
that $z^{-w}P$ is non-degenerate there).

\begin{lemma}\label{t2.2} \cite[Lemmas 2.4 and 2.14]{He03}
(a) Let $(H\to \cmmm,\nnn)$ be a $(TE)$-structure.
Define the holomorphic vector bundle $K:= H_{|\nmmm}$
and the maps 
\begin{equation}\label{2.5}
C=[z\nnn]:\OO(K)\to\oomm^1_M\otimes \OO(K),\  C_X[a]:=[z\nnn_Xa],
\end{equation}
where  $X\in \TT_M$ and $a\in\OO(H)$, 
and
\begin{eqnarray}\label{2.6}
\UU=[z\nabla_{U} ]:\OO(K)\to \OO(K).
\end{eqnarray}
Here $[\ ]$ means the restriction to $\nmmm$.  The maps $C_X$, $X\in \TT_M$, and $\UU$ are commuting, i.e. 
\begin{eqnarray}\label{2.7}
C\land C=0,\quad [C,\UU]=0,
\end{eqnarray}
where 
$$ (C\wedge C)_{X, Y} := C_{X}C_{Y} - C_{Y} C_{X},\ [C, {\mathcal U}]_{X}:= [ C_{X}, \mathcal U ],\quad X, Y\in {\mathcal T}_{M}. 
$$
In particular,  $C$ is a Higgs field.

(b) If there is a pairing $P$ which enriches the data to a
$(TEP)(w)$-structure, then
\begin{align}\label{2.8}
&g: \OO(K)\times \OO(K)\to\OO_M,\\
&g([a],[b]):= z^{-w}P(a,b)\mod z\OO_{\cmmm},\quad a,b\in\OO(H)\nonumber
\end{align}
is $\OO_M$-bilinear, symmetric, non-degenerate,
and  satisfies
\begin{align}\label{2.9}
&g(C_Xa,b) = g(a,C_Xb) \quad X\in\TT_M,a,b\in \OO(H),\\
& g(\UU a,b)= g(a,\UU b)\quad a,b\in\OO(K).
\label{2.10}
\end{align}
\end{lemma}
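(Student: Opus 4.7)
The plan is to verify each claim by direct computation, using only the pole estimates coming from the Poincar\'e rank $1$ condition, the flatness of $\nabla$, and the three axioms on $P$. The conceptual input is minimal; the main subtlety will be the sign produced by the $j$-twist in the second argument of $P$.

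For part (a), I first check that $C_X[a]:=[z\nabla_X a]$ and $\UU[a]:=[z\nabla_U a]$ are well-defined on $\OO(K)=\OO(H)/z\OO(H)$. The pole condition sends $\nabla_X$ and $\nabla_{\partial_z}$ into $z^{-1}\OO(H)$ and $z^{-2}\OO(H)$ respectively, so premultiplication by $z$ (resp.\ $z^2$) gives operators preserving $\OO(H)$; a one-line check using $X(z)=0$ and $U(z)=z$ shows they also preserve $z\OO(H)$. For the commutativity statements, the computation
\[
[z\nabla_X, z\nabla_Y]a = z^2(\nabla_X\nabla_Y-\nabla_Y\nabla_X)a = z^2\nabla_{[X,Y]}a \in z\OO(H)
\]
follows from flatness and $X(z)=Y(z)=0$, giving $C\wedge C=0$. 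An analogous computation for $[z\nabla_X, z\nabla_U]$ uses flatness together with $[X,U]=0$ for the natural lifts, yielding $[C,\UU]=0$.

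For part (b), well-definedness of $g$ follows because axiom (ii) gives $P(\OO(H),\OO(H))\subset z^w\OO_{\cmmm}$, so replacing $a$ (or $b$) by $a+zc$ shifts $P(a,b)$ by a term in $z^{w+1}\OO_{\cmmm}$ which dies after dividing by $z^w$ and reducing modulo $z\OO_{\cmmm}$. Symmetry comes from axiom (i): the relation $P(a,b)(z,t)=(-1)^w P(b,a)(-z,t)$ gives $z^{-w}P(a,b)(z,t)=(-z)^{-w}P(b,a)(-z,t)$, which at $z=0$ yields $g([a],[b])=g([b],[a])$. Non-degeneracy of $g$ on each fiber $K_t$ is precisely the non-degenerate extension statement in axiom (ii).

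The compatibility identities are where the $j$-twist is felt. The crucial observation is that evaluating the second argument at $(-z,t)$ produces a sign when a factor of $z$ is pulled out of that slot, so $P(z\nabla_X s,\tilde s)=zP(\nabla_X s,\tilde s)$ but $P(s, z\nabla_X\tilde s)=-zP(s,\nabla_X\tilde s)$. Multiplying the flatness identity (iii) by $z$ therefore rearranges to
\[
P(z\nabla_X s,\tilde s) - P(s, z\nabla_X\tilde s) = zX(P(s,\tilde s)) \in z^{w+1}\OO_{\cmmm},
\]
and dividing by $z^w$ and restricting to $z=0$ yields $g(C_X[s],[\tilde s])=g([s],C_X[\tilde s])$. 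The identity for $\UU$ is parallel, with $X$ replaced by $U$ and using that $U$ preserves $z^w\OO_{\cmmm}$, so the right-hand side is again in $z^{w+1}\OO_{\cmmm}$. The main obstacle will be purely notational: keeping the $j$-signs in the second slot of $P$ consistent throughout. Once that is handled, both $g$-symmetries of $C$ and $\UU$ drop out mechanically.
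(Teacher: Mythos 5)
Your proof is correct and is essentially the standard direct verification; the paper itself does not prove this lemma but cites \cite[Lemmas 2.4 and 2.14]{He03}, where the argument is exactly this kind of computation with the pole estimates, flatness, and the sign from the $j$-twist in the second slot of $P$. The one point worth making explicit is that in the bracket $[z\nnn_X,z\nnn_U]$ the term coming from $U(z)=z$ contributes $z\cdot z\nnn_Xa\in z\OO(H)$ and so also dies modulo $z$, which your ``analogous computation'' glosses over but which causes no problem.
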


\begin{definition}\label{t2.3}
(a) A $(TEP)$-structure of rank $n$ on $M=\{\textup{pt}\}$ is {\sf semisimple}
if the eigenvalues $u_{1}, \cdots  u_{n}$  of $\UU$ are pairwise different.

\medskip

(b) A $(TEP)$-structure on an arbitrary manifold $M$ is {\sf semisimple}
if it is semisimple above any point $t\in M$. It is {\sf generically
semisimple} if it is semisimple above generic points $t\in M$.
\end{definition}

\begin{example}\label{t2.4}
(i)   Up to isomorphism (see the next subsection), there is a unique $(TEP)(w)$-structure 
of rank $1$ on $M= \C$, such that the eigenvalue of $\UU$  over $u\in \mathbb{C}$ is $u.$ It  has only the automorphisms $\pm\id$.
It is defined as follows. It is generated by a holomorphic section $s$, $
\OO(H)=\OO_{\C\times M}\cdot s$, with the properties
$$
\nonumber \ \nnn_{\frac{\partial}{\partial u}}(s) = -\frac{1}{z}\cdot s,\ 
\label{2.11}\nnn_{U}s  = (\frac{u}{z}+\frac{w}{2})\cdot s,\ P(s,s)(z, u)=i^w\cdot z^w.
$$
Let $\sigma$ be the (multivalued, if $w$ is odd) section of $H$, related to $s$ by 
$\sigma  =z^{-w/2}\cdot e^{u/z}\cdot s.$ Then $\sigma$ is $\nabla$-flat and  
$P(\sigma ,\sigma )=1.$

\medskip
(ii) Two $(TEP)$-structures $(H^{(i)}\to\C\times M^{(i)},
\nnn^{(i)},P^{(i)})$ ($i=1,2$) of weight  $w^{(1)}=w^{(2)}$
can be joined to form in an obvious way a
$(TEP)$-structure on $M=M^{(1)}\times M^{(2)}$ 
whose rank is $\rank H^{(1)}+\rank H^{(2)}$.

\medskip
(iii) $n$ rank 1 $(TEP)$-structures as in (i) can be joined
as in (ii) to form a semisimple $(TEP)$-structure of rank $n$
on the universal covering $N$ of
$\C^n-\{(u_1,....,u_n)\, |\,
\exists\ i\neq j\textup{ with }u_i=u_j\}$.
This is the universal semisimple $(TEP)$-structure with
trivial Stokes structure.
It has only the $2^n$ automorphisms which are in each factor
$\pm\id$.
\end{example}

\subsection{Equivalence of $(TEP)$-structures}

In this subsection we discuss  the formal equivalence (of germs) of $(TEP)$-structures.
We return to a related topic in Subsection \ref{s2.4}, where we consider various weaker equivalences between the 
meromorphic connections  associated to a Saito and a CV-structure.
Since we are interested in germ equivalences, we always assume that the 
bundles underlying our 
$(TEP)$-structures are (holomorphically) trivial. In particular, any formal automorphism between two such bundles is a power series expansion as in
(\ref{power-series}).

\begin{definition} \label{equiv-f}
Two $(TEP)(w)$-structures $(H^{(i)}\rightarrow \mathbb{C}\times M, \nabla^{(i)} , P^{(i)})$ 
($i=1,2$) are 
{\sf formally isomorphic} (respectively, {\sf isomorphic}), if there is  a  formal isomorphism (respectively, a holomorphic isomorphism)
\begin{equation}\label{power-series}
\Psi = \mathrm{id} + z\cdot A_{1} + z^{2}\cdot A_{2} +\cdots ,\quad A_{i}\in {\mathcal O}(\mathrm{Hom}(K_{1}, K_{2})),\  K_{i}:= H^{(i)}\vert_{\{ 0\} \times M}
\end{equation}
which satisfies
\begin{eqnarray}\label{ad1}
\nnn^{(2)}_{{X}}\circ \Psi =\Psi\circ \nnn^{(1)}_{{X}},\quad \nnn^{(2)}_{U}\circ \Psi = \Psi\circ \nnn^{(1)}_{U},\quad X\in {\mathcal T}_{M}
\end{eqnarray}
and 
\begin{eqnarray}\label{ad2}
{P}^{(2)}(\Psi a,\Psi b)=P^{(1)}(a,b)\quad a, b\in {\mathcal O}(H).
\end{eqnarray}

\end{definition}

The theorem of Hukuhara and Turrittin
and others on the formal decomposition of a holomorphic  vector bundle
on $(\C,0)\times (M,0)$ with a meromorphic connection
with an irregular pole along $\nmmm$, and
the theory of Stokes structures, take a particularly simple form, in the semisimple case.
(For a general discussion, see \cite{Ma83a}, \cite{Ma83b} or
\cite{Sa02}). Semisimple $(TEP)$-structures 
were studied sistematically in \cite[Chapter 8]{HS07}.
We will not review the general  theory here. We only state Theorem \ref{t2.5} (see below), which will be applied  in
Subsection \ref{s2.4}. Part (a) of this  theorem is a refinement of Theorem II.5.7
in \cite{Sa02}. Part (b) can be extracted from \cite{Ma83b},
up to the additional rigidity determined by the pairing.
A closely related statement can be found in \cite{LS12}. A detailed proof of Theorem \ref{t2.5} is postponed for a future paper
\cite{DH16},  where the equivalence of $(TEP)$-structures (not necessarily semisimple)  will
be studied systematically.

\begin{theorem}\label{t2.5}
(a) Let $(H^{(i)}\to\C\times M,\nnn^{(i)},P^ {(i)})$ ($i={1,2}$)
be two semisimple  $(TEP)(w)$-structures
on the same simply connected base space $M$, such
that $\UU^{(1)}$ and $\UU^{(2)}$ have everywhere the same
eigenvalues.
Then there are $2^n$ formal isomorphisms
\begin{eqnarray}\label{2.15}
\Psi&:&(\OO(H^{(1)})_{|(\C,0)\times M},\nnn^{(1)},P^{(1)})
\otimes\OO_M[[z]]\\
&&\to (\OO(H^{(2)})_{|(\C,0)\times M},
\nnn^{(2)},P^{(2)})\otimes\OO_M[[z]].\nonumber
\end{eqnarray}
If $H^{(2)}$ is the pull-back by a map $M\to N$ of 
the $(TEP)$-structure in Example \ref{t2.4} (iii)
(so it has trivial Stokes structure), then the $2^n$
isomorphisms arise from one isomorphism by composing it
with the $2^n$ automorphisms of $H^{(2)}$, mentioned  in Example \ref{t2.4}
(iii).

\medskip
(b) If one of the isomorphisms in (a) maps the Stokes structure
of the first $(TEP)$-structure to the Stokes structure of the
second $(TEP)$-structure (it is sufficient to have this
above one point $t_0\in M$), then it is holomorphic.
\end{theorem}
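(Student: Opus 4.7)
The plan is to exploit the classical theory of formal decomposition of irregular meromorphic connections: each semisimple $(TEP)(w)$-structure is, in the formal neighborhood of $\{0\}\times M$, an orthogonal direct sum of rank-$1$ pieces indexed by the eigenvalues of $\UU$, and each such rank-$1$ piece is formally isomorphic, up to a sign, to the model of Example~\ref{t2.4}(i). Matching the $n$ factors of $H^{(1)}$ with those of $H^{(2)}$ one by one, with independent sign choices, then yields the $2^n$ formal isomorphisms.

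First I would globalize the eigendecomposition of $\UU^{(i)}$ on $K^{(i)}:=H^{(i)}|_{\{0\}\times M}$: since the eigenvalues are pairwise distinct on the simply connected $M$, there is a holomorphic splitting $K^{(i)}=\bigoplus_{j=1}^n K_j^{(i)}$ with $\UU^{(i)}|_{K_j^{(i)}}$ scalar equal to a holomorphic function $u_j:M\to\C$, labeled consistently for $i=1,2$. Next I would invoke the parametric Hukuhara--Turrittin theorem (cf.\ \cite{Ma83a,Ma83b,Sa02} and its adaptation to $(TEP)$-structures in \cite[Ch.~8]{HS07}) to lift this to a formal direct sum decomposition
$$
\OO(H^{(i)})_{|(\C,0)\times M}\otimes_{\OO_{\cmmm,0}}\OO_M[[z]]\;\cong\;\bigoplus_{j=1}^n\widehat L_j^{(i)},
$$
where each $\widehat L_j^{(i)}$ is a formal rank-$1$ $(TE)$-structure with $\UU^{(i)}$-eigenvalue $u_j$. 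I would then argue that this splitting is automatically $P^{(i)}$-orthogonal: its leading ($z=0$) term is $g^{(i)}$ from \eqref{2.8}, which pairs distinct $\UU^{(i)}$-eigenspaces trivially by \eqref{2.10}, and higher-order orthogonality is forced inductively in powers of $z$ by the flatness of $P^{(i)}$ and the invertibility of $u_j-u_k$ for $j\neq k$.

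Given the orthogonal splitting, each summand $\widehat L_j^{(i)}$ is a formal rank-$1$ $(TEP)(w)$-structure with $\UU$-eigenvalue $u_j$, and an order-by-order computation produces a formal generator $s_j^{(i)}$ normalized by $P^{(i)}(s_j^{(i)},s_j^{(i)})=i^w z^w$, unique up to an overall sign; this identifies $\widehat L_j^{(i)}$ with the pull-back of the rank-$1$ model in Example~\ref{t2.4}(i) attached to the function $u_j$. Setting $\Psi(s_j^{(1)})=\pm s_j^{(2)}$ factor by factor then yields the claimed $2^n$ formal isomorphisms, and when $H^{(2)}$ has trivial Stokes structure its full automorphism group is $\{\pm\id\}^n$ by Example~\ref{t2.4}(iii), so these $2^n$ isomorphisms arise from one by composition with those automorphisms. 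For (b), I would appeal to the Malgrange--Sibuya theorem: a formal isomorphism between germs of meromorphic connections with irregular singularities is realized holomorphically on each Stokes sector, and glues to a genuine holomorphic isomorphism near $\{0\}\times\{t_0\}$ exactly when it respects the Stokes data at $t_0$; simple connectedness of $M$ and local constancy of Stokes data then propagate this holomorphic isomorphism to all of $M$. The main obstacle, and the reason the proof is deferred to \cite{DH16}, is the parametric Hukuhara--Turrittin step together with the orthogonality refinement: one must control the formal splitting of $z$-power series while simultaneously keeping holomorphic dependence on the multidimensional parameter $t\in M$ and preserving the bilinear pairing $P^{(i)}$.
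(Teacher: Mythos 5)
The paper does not actually prove Theorem \ref{t2.5}: the authors state that part (a) is a refinement of Theorem II.5.7 in \cite{Sa02}, that part (b) can be extracted from \cite{Ma83b}, and that a detailed proof is deferred to \cite{DH16}. So there is no in-paper argument to measure yours against; what can be assessed is whether your sketch follows the route those sources take, and it does. The chain --- parametric Hukuhara--Turrittin decomposition into rank-one formal summands indexed by the eigenvalues of $\UU$, $P$-orthogonality of that splitting, reduction of each summand to the model of Example \ref{t2.4}(i) up to sign, and sectorial realization (Malgrange--Sibuya) plus local constancy of Stokes data for part (b) --- is the standard argument, and you correctly identify the parametric, pairing-preserving version of Hukuhara--Turrittin as the step that carries the real weight. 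Your inductive orthogonality argument is also sound: for $j\neq k$ the pairing $\phi=P(a_j,a_k)$ satisfies $z\partial_z\phi=\bigl((u_j-u_k)/z+O(1)\bigr)\phi$, which kills the lowest-order coefficient of any nonzero formal solution.

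The one step you pass over too quickly is the identification of each formal rank-one summand $\widehat L_j^{(i)}$ with the model. Normalizing a generator by $P(s,s)=i^wz^w$ is not by itself enough: a rank-one formal $(TE)$-structure with exponential factor $e^{u_j/z}$ still carries a residue, i.e.\ $\nnn_U s=(u_j/z+\alpha)s$ with $\alpha$ a priori arbitrary, and it is precisely the flatness of $P$ under $\nnn_U$ together with $P(s,s)\in i^wz^w\OO^\times$ that forces $\alpha=w/2$ (this is the ``additional rigidity determined by the pairing'' the authors allude to); one must then also check that the remaining closed, pole-free part of the connection one-form is exact and can be gauged into the generator. Uniqueness up to sign follows only from combining both normalizations: the connection conditions force the gauge factor $f$ to be constant, and $P(fs,fs)=f^2P(s,s)$ then forces $f=\pm 1$ --- the pairing alone would leave the much larger ambiguity $f(z)f(-z)=1$. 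With that step filled in, your outline is a faithful sketch of the proof the paper postpones to \cite{DH16}.
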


\subsection{Saito structures}\label{s2.2}

This subsection recalls a correspondence which underlies
one construction of Frobenius manifolds.
On one side is a meromorphic connection on a family of
trivial bundles on $\P^1\times M$ (a pure $(TLEP)$-structure),
on the other side is a differential geometric structure
on the restriction of the bundle to $\nmmm$ (a Saito structure).

\begin{definition}\label{t2.6} \cite[Definition 5.5]{He03}
A {\sf $(TLEP)$-structure} on $M$ of weight $w\in \Z$
(short: $(TLEP)(w)$-structure) is a tuple
$(H\to\pmmm,\nnn,P)$ such that its restriction
to $\cmmm$ is a $(TEP)(w)$-structure on $M$,
the pole of $\nabla$  along $\immm$ is logarithmic,
and the pairing $P$ of the
$(TEP)(w)$-structure extends to a non-degenerate pairing
\begin{eqnarray}\label{2.16}
P:\OO(H)\otimes j^*\OO(H)\to z^w\OO_\pmmm
\end{eqnarray}
on $\pmmm$. The $(TLEP)$-structure is {\sf pure} if the bundle $H$ is pure,
i.e.  each bundle $H_t\to \P^1\times \{t\}$ is  (holomorphically) trivial.
\end{definition}

This definition requires some comments.
The statement that $\nabla$ has a 
logarithmic pole along $\{\infty\} \times M$ 
means that $i^*(\nnn)$ has a logarithmic pole at $\{0\}\times M$
(Definition \ref{t2.1} (a)),
where $i(z,t)  = (\frac{1}{z}, t)$.
The statements on $P$ can be expressed more concretely as follows:
the map (\ref{p-concrete}) extends to $H_{(\infty , t)}$ (for any $t\in M$) and is non-degenerate there. Moreover, the extension is holomorphic. This means that for
any two sections $s$, $\tilde{s}$ of $H$, defined in  neighbourhood of $(\infty , t)$, the map $z^{-w} P(s, \tilde{s})\in {\mathcal O}_{\C \times M}$ belongs to 
${\mathcal O}_{\P^{1}\times M}.$

\begin{lemma}\label{t2.7} \cite[Lemma 5.3]{He03}
Let $(H\to\P^1\times M,\nnn,P,w)$ be a $(TLEP)(w)$-structure.
Let $\www K:=H_{|\immm}$ and denote by $[\ ]$ the restriction to $\immm$.
Define a holomorphic
connection $\nnn^{res}$ and a holomorphic endomorphism $\VV^{res}$
on $\www K$, by: for any $X\in \TT_{M}$ and $a\in \OO (H)$, 
\begin{eqnarray}\label{2.17}
\nnn^{res}_X [a] := [\nnn_X a],\quad 
\VV^{res}([a]):= - [\nnn_{U} a].
\end{eqnarray}
The connection $\nnn^{res}$ is flat and the endomorphism
$\VV^{res}$ is $\nnn^{res}$-flat:
\begin{eqnarray}\label{2.19}
(\nnn^{res})^2=0,\quad \nnn^{res}(\VV^{res})=0.
\end{eqnarray}
\end{lemma}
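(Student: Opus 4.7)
The plan is to work in the coordinate $\tilde{z}=1/z$ near $\{\infty\}\times M$, unpack the logarithmic pole condition there, and then carry out three short verifications: well-definedness of the two operators, flatness of $\nabla^{res}$, and $\nabla^{res}$-flatness of $\VV^{res}$. The pairing $P$ plays no role in this lemma and can be ignored throughout.

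First I would record that $\partial_{z}=-\tilde{z}^{2}\partial_{\tilde{z}}$, so $U=z\partial_{z}=-\tilde{z}\partial_{\tilde{z}}$. The logarithmic pole condition along $\{\infty\}\times M$ then means concretely that for any $X\in\TT_{M}$ and any local section $a\in\OO(H)$ near $\{\infty\}\times M$, both $\nabla_{X}a$ and $\nabla_{\tilde{z}\partial_{\tilde{z}}}a=-\nabla_{U}a$ again lie in $\OO(H)$. This is precisely what is needed for $[\nabla_{X}a]$ and $[\nabla_{U}a]$ to make sense as holomorphic sections of $\widetilde{K}$.

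For well-definedness, I would argue that if $a,a'\in\OO(H)$ have the same restriction to $\{\infty\}\times M$, then locally $a-a'=\tilde{z}\,b$ for some $b\in\OO(H)$. For $X\in\TT_{M}$, since $X(\tilde{z})=0$ we have $\nabla_{X}(\tilde{z}\,b)=\tilde{z}\,\nabla_{X}b$, so $[\nabla_{X}(a-a')]=0$. For $\VV^{res}$, using $U(\tilde{z})=-\tilde{z}$ and the logarithmic pole condition applied to $b$, one finds $\nabla_{U}(\tilde{z}\,b)=-\tilde{z}\,b+\tilde{z}\,\nabla_{U}b\in\tilde{z}\,\OO(H)$, whence $[\nabla_{U}(a-a')]=0$. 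The Leibniz rule for $\nabla^{res}$ in the argument $X$ and the $\OO_{M}$-linearity of $\VV^{res}$ are inherited directly from the corresponding properties of $\nabla$.

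Finally, both flatness statements reduce to flatness of $\nabla$ together with the identity $[X,U]=0$ for $X\in\TT_{M}$ (which holds because $U$ is the lift of a vector field on $\P^{1}$ and $X$ the lift of one on $M$). For $(\nabla^{res})^{2}=0$, given $X,Y\in\TT_{M}$, the expression $[\nabla^{res}_{X},\nabla^{res}_{Y}]-\nabla^{res}_{[X,Y]}$ is the restriction to $\{\infty\}\times M$ of the corresponding curvature expression for $\nabla$, which vanishes. For $\nabla^{res}(\VV^{res})=0$, given $X\in\TT_{M}$ one computes
$$\nabla^{res}_{X}\VV^{res}[a]-\VV^{res}\nabla^{res}_{X}[a]=-\bigl[\nabla_{X}\nabla_{U}a-\nabla_{U}\nabla_{X}a\bigr]=-[\nabla_{[X,U]}a]=0,$$
again by flatness of $\nabla$ and $[X,U]=0$. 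I do not expect a genuine obstacle: the only subtle point is the passage to the coordinate $\tilde{z}$ and the observation that the logarithmic pole condition is exactly what forces $\nabla_{U}$ to preserve $\OO(H)$ near $\{\infty\}\times M$; after that, both flatness assertions are formal consequences of the vanishing of the curvature of $\nabla$.
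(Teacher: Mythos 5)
Your proof is correct and is essentially the standard argument: the paper itself gives no proof of this lemma, deferring to \cite[Lemma 5.3]{He03}, and your route (pass to $\tilde z=1/z$, observe that the logarithmic pole condition means $\nabla_X$ and $\nabla_U$ preserve $\OO(H)$ near $\{\infty\}\times M$, check well-definedness on $\tilde z\,\OO(H)$, then deduce both flatness statements from the flatness of $\nabla$ on $\C^*\times M$ together with $[X,U]=0$ and continuity of the holomorphic curvature expressions up to $\{\infty\}\times M$) is exactly the intended one. No gaps.
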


We now turn to the differential geometric view-point: Saito structures.

\begin{definition}\label{t2.8} \cite[Section 1.b]{Sa08},\cite[Definition 5.6]{He03}
A {\sf Saito structure} is a tuple $(K\to M,C,\UU,g,\nnn^{r},\VV)$,
where $K\to M$ is a holomorphic vector bundle, $C\in\Omega^{1}(\mathrm{End}(K))$, $\UU , \VV\in \OO (\mathrm{End}(K)),$ and
$g$ is a holomorphic symmetric non-degenerate bilinear form on $K$, such
that \eqref{2.7}, \eqref{2.9}, \eqref{2.10}, \eqref{2.19} (the
latter with $\nnn^{res}$, $\VV^{res}$ replaced by 
 $\nnn^{r}$,  $\VV$) and
\begin{eqnarray}\label{2.20a}
\nnn^{r}(C)&=&0,\\
\nnn^{r}(\UU)-[C,\VV]+C&=&0,\label{2.20b}\\
\nnn^{r}(g)&=&0,\label{2.21}\\
g(\VV a,b)+g(a,\VV b)&=&0,\ a,b\in \OO(K)
\label{2.22}
\end{eqnarray}
hold.   (In \cite[Definition 5.6]{He03} it is called {\sf Frobenius type
structure}.)
\end{definition}

In the above definition the $2$-form $\nabla^{r}(C) \in \Omega^{2}(M, \mathrm{End}(K))$ is defined by
\begin{equation}\label{saito-pot}
(\nabla^{r}(C))(X, Y) = \nabla^{r}_{X} (C_{Y}) - \nabla^{r}_{Y} (C_{X}) -  C_{[X,Y]},\ X, Y\in {\mathcal T}_{M},
\end{equation}
and the relation  (\ref{2.20a}) is called the {\sf potentiality condition}. 

\medskip

The relation between $(TEP)$-structures and Saito structures is expressed in the next theorem. 

\begin{theorem}\label{t2.9} \cite[Theorem 5.7]{He03}
Fix $w\in\Z$. There is a natural 1-1 correspondence between pure
$(TLEP)(w)$-structures $(H\to\pmmm,\nnn,P)$ and Saito structures
$(K\to M,C,\UU,g,\nnn^{r},\VV)$.
\end{theorem}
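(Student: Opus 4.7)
The plan is to realize the correspondence by two inverse operations: restriction to $\nmmm$ one way, pull-back via $p$ the other, with purity providing the bridge. Given a pure $(TLEP)(w)$-structure $(H,\nnn,P)$, set $K := H|_{\nmmm}$; Lemma~\ref{t2.2} already furnishes tensors $C \in \Omega^{1}(\End(K))$, $\UU \in \OO(\End(K))$ and a symmetric non-degenerate $g$ on $K$ satisfying \eqref{2.7}, \eqref{2.9}, \eqref{2.10}. Apply Lemma~\ref{t2.7} to $\widetilde K := H|_{\immm}$ to obtain $\nabla^{res}$ and $\VV^{res}$ satisfying \eqref{2.19}. The purity hypothesis---that every fibre $H|_{\P^{1}\times\{t\}}$ is trivial---means $p_{*}\OO(H)$ is locally free of rank $\mu$ and the adjunction $p^{*}p_{*}\OO(H)\to\OO(H)$ is an isomorphism; restriction to $\nmmm$ and to $\immm$ both identify $p_{*}\OO(H)$ with $\OO(K)$ and $\OO(\widetilde K)$ respectively, giving a canonical isomorphism $K \cong \widetilde K$. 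Transport $(\nabla^{res},\VV^{res})$ along this isomorphism to obtain $(\nabla^{r},\VV)$ on $K$.

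The remaining axioms \eqref{2.20a}--\eqref{2.22} emerge by expanding the flatness of $\nnn$ and of $P$ in powers of $z$, applied to sections $s$ of $K$ viewed, via purity, as constant along the fibres of $p$. The pole structure of $\nnn$ forces schematic formulas
\begin{equation*}
\nnn_{X}s = \nabla^{r}_{X}s + \tfrac{1}{z}C_{X}s,\qquad \nnn_{U}s = \tfrac{1}{z}\UU s - \VV s
\end{equation*}
(allowing a weight-dependent shift in the second). Successive coefficients of $[\nnn_{X},\nnn_{Y}]s = 0$ give $C\wedge C = 0$ (already known), the potentiality \eqref{2.20a}, and $(\nabla^{r})^{2}=0$; those of $[\nnn_{U},\nnn_{X}]s = 0$ give $[C,\UU]=0$ (already known), the identity \eqref{2.20b}, and $\nabla^{r}(\VV)=0$. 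The $\nnn$-flatness of $P$ expanded on such sections yields \eqref{2.21} and \eqref{2.22}.

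For the converse, starting from a Saito structure on $K$, set $H := p^{*}K$, which is pure by construction, and \emph{define} $\nnn$ on pulled-back sections by the displayed formulas and $P$ by $P(p^{*}s,p^{*}s') := z^{w}g(s,s')$; extend to arbitrary sections of $\OO(H)$ by the Leibniz rule and $z$-linearity. The built-in pole profile realises Poincaré rank one along $\nmmm$ and a logarithmic pole along $\immm$; non-degeneracy of $z^{-w}P$ at both divisors is that of $g$; the required symmetry of $P$ is that of $g$. Flatness of $\nnn$ and of $P$ then amounts, via the same power-of-$z$ expansion read in reverse, to the Saito axioms, which are now hypotheses.

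The main difficulty is the bookkeeping around $\immm$. One must check that the canonical identification $K \cong \widetilde K$ arising from purity is compatible with the pairing and with the $U$-part of the connection, and that the two constructions are genuinely inverse. The latter is essentially a rigidity statement: a connection on $p^{*}K$ with Poincaré rank one along $\nmmm$ and regular along $\immm$ is determined by its two principal parts $(C,\UU)$ and $(\nabla^{r},\VV)$, and a pairing into $z^{w}\OO_{\pmmm}$ holomorphic and non-degenerate on both divisors is determined by its restriction to $\nmmm$. Pinning down any weight-dependent shift in the $\nnn_{U}$-formula so that \eqref{2.22} holds on the nose is the remaining detail.
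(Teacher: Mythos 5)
Your proposal is correct and follows essentially the same route as the paper's own sketch: restriction to $\{0\}\times M$ and pull-back via $p$ as inverse operations, with purity supplying the canonical identification $K\cong\widetilde{K}$, the explicit connection formula with a Poincar\'e rank one pole along $\{0\}\times M$ and a logarithmic pole along $\{\infty\}\times M$, the pairing $z^{w}g$, and the $z$-expansion of flatness producing the Saito axioms. The one detail you leave open --- the weight-dependent shift --- is resolved in the paper by setting $\VV:=\VV^{res}+\frac{w}{2}$ and $\nnn_{U}=\frac{1}{z}\UU-\VV+\frac{w}{2}$, which is exactly what reconciles the skew-symmetry \eqref{2.22} with the $\nnn$-flatness of a pairing taking values in $z^{w}\OO_{\pmmm}$.
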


{\bf Sketch of the proof:} 
Consider a pure $(TLEP)(w)$-structure and define $K,C,\UU$ and $g$ as in Lemma \ref{t2.2}. Define $\www
K,\nnn^{res},\VV^{res}$ as in Lemma \ref{t2.7}. Due to the pureness, $K$
and $\www K$ are canonically isomorphic. 
Therefore, the connection  $\nnn^{res}$ and the endomorphism
$\VV^{res}+\frac{w}{2}$ 
of $\tilde{K}$ induce a connection $\nnn^{r}$ and an endomorphism $\VV$ of $K$.
Then $(K, C, \UU , g, \nabla^{r}, \VV )$ is a Saito structure. 

\medskip

Conversely, consider  a Saito structure $(K, C, \UU , g, \nabla^{r}, \VV )$.
Lift $C,\UU,\nnn^{r}$ and
$\VV$ canonically to $H:= p^{*}(K)$. Define the connection $\nnn$ on $H$ by
\begin{eqnarray}\label{2.23}
\nnn a &:=& \left(\nnn^{r} + \frac{1}{z}C +
\left(\frac{1}{z}\UU-\VV+\frac{w}{2}\right)\frac{\ddd z}{z}\right)(a),
\end{eqnarray}
for $a\in p^{-1}\OO(K)\subset\OO(H)$,
and the pairing $P$  by
\begin{eqnarray}\label{2.24}
P(a(z,t),b(-z,t))&:=& z^w\cdot g(a,b).
\end{eqnarray}
for $a,b\in K_t \cong H_{(z,t)}\cong H_{(-z,t)}$.
Then $(H, \nabla , P)$ is a pure $(TLEP)(w)$-structure.
\hfill$\Box$

\subsection{$tt^*$ geometry}\label{s2.3}

\noindent
$tt^*$ geometry was created in \cite{CV91,CV93}
and put into a framework  in \cite{He03}. It generalizes
variations of Hodge structures. 
Like before, there are two ways to approach $tt^{*}$-geometry: one by a meromorphic 
connection (a pure $(TERP)$-structure), the other by a
differential geometric structure (a $CV$-structure). 

A new feature is that the meromorphic connection can be 
defined in two ways: either on a holomorphic bundle on $\C\times M$
with an additional real structure on its restriction to $\C^*\times M$, or 
on a real analytic family of holomorphic bundles
on $\P^1$, parametrized by $M$.  Here we adopt the first way as the definition; the second way
will then be obtained  as an extension of the data to $\{\infty\}\times M$.
In fact, this is how $(TERP)$-structures arise in singularity theory:
the data on $\C\times M$ comes from geometry (by a 
Fourier-Laplace transformation of a Gau{\ss}-Manin connection) and 
the extension to $\{\infty\}\times M$ comes as an afterthought.
After presenting the basic facts on $(TERP)$-structures, we define the CV-structures and explain the correspondence between these and pure
$(TERP)$-structures.

\begin{definition}\label{t2.10a}
\cite[Definition 2.12]{He03}
A {\sf $(TERP)$-structure} of weight $w\in\Z$ is a tuple
$(H\to\C\times M,H'_\R,\nnn,P,w)$ such that 
$(H\to\C\times M,\nnn,P,w)$ is a $(TEP)$-structure of weight
$w$ and $H'_\R\to\C^*\times M$ is a real subbundle
of $H':=H|_{\C^*\times M}$, with the following properties:

\medskip

(a) $H_\R'$  is flat, i.e.  in a neighborhood of any point there is 
a basis of sections of $H_\R'$ which are $\nnn$-parallel and holomorphic.

\medskip 

(b) For any 
$(z,t)\in\C^*\times M$, 
$H_{(z,t)}=(H'_{\R})_{(z,t)}\oplus i(H'_{\R})_{(z,t)}$ and  
\begin{eqnarray}\label{2.25a}
P:(H'_{\R})_{(z,t)}\times (H'_{\R})_{(-z,t)}\to i^w\R.
\end{eqnarray}
\end{definition}

We remark that relation (\ref{2.25a}) is equivalent to
\begin{equation}\label{2.25a-1}
P (\kappa_{H} (s ), \kappa_{H}(\tilde{s })) = (-1)^{w} \overline{ P(s , \tilde{s })},\ s  , \tilde{s} \in {\mathcal O}(E).
\end{equation}

Following \cite{He03} (Lemma 2.14), we now describe the structure
at $\infty$ induced by a $(TERP)$-structure (the second way mentioned above).
Let $(H\to\C\times M,H'_\R,\nnn,P,w)$ be a $(TERP)$-structure
of weight $w$. 
As above, denote by $\kappa_H: H^{\prime} \rightarrow H^{\prime}$ the $\C$-antilinear involution with
$\kappa_H|_{H'_\R}=\id$. Define the antiholomorphic 
involution 
\begin{eqnarray}\label{2.25b}
\gamma:\P^1\times M\to\P^1\times M,\quad (z,t)\mapsto
(\frac{1}{\oooo{z}},t).
\end{eqnarray}
The points $(z,t)\in\C^*\times M$ and $\gamma(z,t)\in\C^*\times M$
are contained in the real half-line 
$\{(\zeta,t)\in\C^*\times M\, |\, \arg\zeta=\arg z\}$.
Define the $\C$-antilinear isomorphism 
\begin{eqnarray}\label{2.25d}
\tau : H^{\prime} \rightarrow H^{\prime},\  \tau_{(z,t)}:H_{(z,t)}&\to& H_{\gamma(z,t)},\quad (z,t)\in \mathbb{C}^{*} \times M,
\end{eqnarray}
where, for any $a\in H_{(z,t)}$,  $\tau_{(z,t)}(a):= [(\kappa_{H})_{(z,t)} (z^{-w}a)]^{\nabla} 
\in H_{\gamma (z,t)}$ is the 
$\nabla$-parallel transport 
of $(\kappa_{H})_{(z,t)} (z^{-w}a)\in H_{(z,t)}$ along this half line
(we always denote by a superscript $\nabla$ the $\nabla$-parallel transport of vectors along such lines).
Then $\tau$ satisfies $\tau^2=\id$. 
It maps a holomorphic section $\sigma$ of $H$
defined on $U\subset\C^*\times M$ to the section 
$\tau(\sigma)$ of $H$ defined on $\gamma(U)$, where
$$\tau(\sigma)(z,t):=\tau(\sigma(\gamma(z,t))).$$
This section is holomorphic in $z$ and real analytic in $t$.
The union of all sections $\tau(\sigma)$, where $\sigma$
are sections of $H$ near $\{0\}\times M$, defines an extension
of $H\to\C\times M$ to a vector bundle $\wwh{H}\to\P^1\times M$.
It is a real analytic family with respect to $t\in M$ of holomorphic 
vector bundles on $\P^1$.
The connection and the pairing of the $(TERP)$-structure extend to $\wwh{H}$ as follows.

\begin{lemma}\label{t2.10b}  
The pairing $z^{-w}P$ extends to a nondegenerate pairing
near $\{\infty\}\times M$.
The map $\tau$ extends to an automorphism of $\wwh{H}$,
which lifts $\gamma$.

The behaviour of the connection near $\{\infty\}\times M$ requires some 
diligence.
For fixed $t\in M$, it is a meromorphic connection on $\wwh{H}|_{\P^1\times\{t\}}$ 
with a pole of order $\leq 2$ at $\infty$.
If $\sigma$ is a holomorphic section on $U$ near $\{0\}\times M$, and
$X\in\TT_M$, then $\nnn_X\tau(\sigma)=0$, and $\nnn_{\oooo X}\tau(\sigma)$
can be written as a combination of sections $\tau(\sigma_1),...\tau(\sigma_\mu)$
where $\sigma_1,...,\sigma_\mu$ are a basis of $\OO(H)|_U$ and where the
coefficients are antiholomorphic in $t$ and meromorphic in $z$ with poles
of order $\leq 1$ along $\{\infty\}\times M$.
\end{lemma}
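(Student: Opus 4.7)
The approach is to pick, on each simply connected subset of $\csmmm$, a local $\nnn$-flat $\R$-basis $s_1,\ldots,s_\mu$ of $H'_\R$ and to reduce all three assertions to computations in this frame, exploiting the $\C$-antilinearity of $\tau$ and the $\nnn$-flatness of $P$. The cornerstone is the identity
\begin{equation*}
\tau(s)(z,t)=z^{w}s(z,t)\quad\text{for every $\nnn$-flat }s\in\OO(H'_\R),
\end{equation*}
which comes from unwinding \eqref{2.25d}: $\kappa_H$ fixes $s$, the scalar $(1/\oooo z)^{-w}=\oooo z^{w}$ is sent by $\kappa_H$ to $z^{w}$, and $\nnn$-parallel transport preserves $s$. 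By $\C$-antilinearity of $\tau$, any holomorphic section $\sigma=\sum_j f_j s_j$ of $H$ near $\nmmm$ then satisfies
\begin{equation*}
\tau(\sigma)(z,t)=z^{w}\sum_{j=1}^{\mu}g_j(z,t)\,s_j(z,t),\qquad g_j(z,t):=\oooo{f_j(1/\oooo z,t)},
\end{equation*}
with each $g_j$ holomorphic in $1/z$ near $\immm$ and antiholomorphic in $t$.

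From this identity the bundle extension and the automorphism $\tau$ are immediate: in the $\P^1$-coordinate $u=1/z$ at $\infty$, the transition from the $H$-chart to the $\tau$-chart is multiplication by $u^{-w}$ composed with the assignment $f\mapsto\oooo{f\circ\gamma}$ on coefficients; both ingredients are holomorphic in $u$ and real analytic in $t$, so $\wwh H\to\pmmm$ is a well-defined real analytic family of holomorphic $\P^1$-bundles, and $\tau$ extends tautologically by swapping chart systems, with $\tau^{2}=\id$ inherited from $\csmmm$ and the lift of $\gamma$ built in. For the pairing, the matrix $\bigl(P(s_i,s_j)\bigr)$ is a constant matrix in $i^{w}\R$ by $\nnn$-flatness of $P$ and the reality \eqref{2.25a-1}, and substituting the expansion above gives, for two holomorphic sections $\sigma,\widetilde\sigma$,
\begin{equation*}
z^{-w}P\bigl(\tau(\sigma),\tau(\widetilde\sigma)\bigr)(z,t)=(-1)^{w}z^{w}\sum_{i,j}g_i(z,t)\,\widetilde g_j(-z,t)\,P(s_i,s_j).
\end{equation*}
The factor $z^{w}$ is cancelled by the $z^{-w/2}$-asymptotics of $g_j$ at $\infty$ (traced back through $g_j=\oooo{f_j\circ\gamma}$ to the $z^{w/2}$-type behavior of holomorphic sections expressed in the flat frame near $\nmmm$), and the exponential Stokes factors cancel between $g_i(z)$ and $\widetilde g_j(-z)$, leaving a function holomorphic in $u$ near $u=0$; non-degeneracy on $\immm$ transfers from that on $\nmmm$ via the same formula.

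Finally, from $\nnn s_j=0$ and the expansion of $\tau(\sigma)$ one checks $\nnn_{X}\tau(\sigma)=z^{w}\sum_j(Xg_j)\,s_j=0$ for every $X\in\TT_M$ (since $g_j$ is antiholomorphic in $t$), while for antiholomorphic $\oooo X$ one gets $\nnn_{\oooo X}\tau(\sigma)=z^{w}\sum_j(\oooo X g_j)\,s_j$; re-expressing the flat frame $s_j$ in the holomorphic basis $\sigma_1,\ldots,\sigma_\mu$ of $\OO(H)|_U$ (hence in the $\tau(\sigma_k)$) introduces a Stokes change-of-frame matrix which after the $z^{w}$-twist contributes at worst a simple pole at $\immm$, and for $\nnn_{z\paa_z}$ at fixed $t$ the product rule together with the Poincar\'e rank one pole of $\nnn$ at $\nmmm$ yields a pole of order at most $2$ at $\immm$. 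I expect the main obstacle to be the cancellation in the pairing formula, where the $z^{w}$-twist must be balanced precisely against the irregular-singularity behavior of the flat frame at $\nmmm$ using the reality condition; the bookkeeping for $\nnn$ is routine once the identity $\tau(s)=z^{w}s$ for flat real $s$ is in hand.
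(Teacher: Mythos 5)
Your cornerstone identity $\tau(s)=z^{w}s$ for $\nnn$-flat real sections is correct, and the resulting expansion $\tau(\sigma)=z^{w}\sum_j g_j s_j$ with $g_j(z,t)=\oooo{f_j(1/\oooo z,t)}$ is a legitimate starting point. The gap is in the asymptotics you attribute to the $g_j$. Since $\nnn$ has an \emph{irregular} (Poincar\'e rank one) pole along $\nmmm$, the coefficients $f_j$ of a holomorphic section of $H$ in a flat frame do not extend to $z'=0$: generically they carry factors of the type $z'^{\alpha}e^{-u_i/z'}$ (see Example \ref{t2.4}(i), where $f=z'^{w/2}e^{-u/z'}$). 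After $z'=1/\oooo z$ and conjugation these become $z^{-\alpha}e^{-\oooo{u_i}z}$, so the $g_j$ have essential singularities at $\immm$; they are neither ``holomorphic in $1/z$ near $\immm$'' nor of ``$z^{-w/2}$-asymptotics''. Consequently the pairing argument as written fails: in the sum $\sum_{i,j}g_i(z)\www g_j(-z)P(s_i,s_j)$ there is no term-by-term cancellation of exponentials that you can read off from the individual $g_j$ (and in the non-semisimple or nontrivial-Stokes case there is no reason for $P(s_i,s_j)$ to be supported on pairs whose exponentials cancel). The same defect infects the connection part: the change of frame between $(s_j)$ and $(\tau(\sigma_k))$ is an irregular (exponential) matrix, so the claim that it ``contributes at worst a simple pole'' is unjustified.

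The obstacle you flag dissolves if you do not expand in the flat frame at all, which is what the paper does for the pairing. By $\nnn$-flatness of $P$ and flatness of $H'_{\R}$, parallel transport along the half-line carries $P_{(z,t)}$ back to $P_{\gamma(z,t)}$, and the reality condition \eqref{2.25a-1} then gives directly
\begin{equation*}
z^{-w}P\bigl(\tau(\sigma),\tau(\www\sigma)\bigr)(z,t)
=(-1)^{w}\,\overline{(1/\oooo z)^{-w}\,P_{\gamma(z,t)}\bigl(\sigma(\gamma(z,t)),\www\sigma(\gamma(-z,t))\bigr)},
\end{equation*}
and the right-hand side extends holomorphically and nondegenerately to $z=\infty$ because $z'^{-w}P(\sigma,\www\sigma)$ is holomorphic and nondegenerate at $z'=0$ by Definition \ref{t2.1}(c)(ii). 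Equivalently, in your notation the double sum \emph{resums}, by constancy of $P(s_i,s_j)$, to $(-1)^{w}\overline{P(\sigma,\www\sigma)(1/\oooo z,t)}$, which is $z^{-w}$ times a function holomorphic at $\immm$ — the cancellation is a global consequence of $P(\sigma,\www\sigma)\in z^{w}\OO_{\cmmm}$, not of asymptotics of individual coefficients. For the connection, the analogous shortcut is the identity $\nnn_{\oooo X}\tau(\sigma)=\tau(\nnn_X\sigma)$ (and its $z$-analogue), which converts the order-$\leq 1$ (resp.\ order-$\leq 2$) pole of $\nnn_X$ (resp.\ $\nnn_{\partial_{z'}}$) at $\nmmm$ into the asserted pole orders at $\immm$ without ever passing through the flat frame.
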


{\bf Proof:} To keep the text short, we only explain the statements about the pairing.
For any two holomorphic sections $\sigma$, $\tilde{\sigma}$ of $E$ 
in a neighbourhood of $(0,t)$, we can write
\begin{align*}
\nonumber & z^{-w}P( \tau (\sigma ), \tau (\tilde{\sigma }))(z,t) 
= z^{-w} P_{(z,t)}( \tau (\sigma )(z,t), \tau (\tilde{\sigma })(-z,t))\\
\nonumber & = (-z)^{w} P_{(z,t)} ( [\kappa_{H} \sigma ( \gamma (z,t) )]^{\nabla}, 
[\kappa_{H}  \sigma ( \gamma (-z,t))  ]^{\nabla}).
\end{align*}
From the flatness of $P$ and $H^{\prime}_{\mathbb{R}}$, we obtain
\begin{eqnarray*}
&&P_{(z,t)} ( [\kappa_{H} \sigma ( \gamma (z,t) )]^{\nabla}, 
[\kappa_{H}  \sigma ( \gamma (-z,t)) ]^{\nabla}) \\
&=& P_{\gamma (z,t)} ( \kappa_{H}\sigma ( \gamma (z,t)), 
\kappa_{H} \sigma (\gamma (-z,t))).
\end{eqnarray*}
Combining this relation with relation (\ref{2.25a-1}), we deduce that
$$
z^{-w}P( \tau (\sigma ), \tau (\tilde{\sigma }))(z,t) 
= (-1)^w \overline{ (1/\oooo{z})^{-w}
P_{\gamma (z,t)} (\sigma (\gamma (z,t)), \tilde{\sigma }(\gamma (-z,t)))}.
$$
For every fixed $t$, this function extends holomorphically to $z=\infty$.
\hfill$\Box$

\begin{definition} A $(TERP)$-structure is {\sf pure}  
if the bundle $\wwh{H}$  is pure, that means, each bundle 
$\wwh{H}|_{\P^1\times\{t\}}$, $t\in M$, is holomorphically trivial.
\end{definition}

We now turn to the differential geometrical side of $tt^{*}$-geometry: CV-structures.

\begin{definition}\label{t2.10} \cite[Definition 2.16]{He03}
A {\sf CV-structure} is a tuple
$(K,C,\UU,g,\kappa,h,D,\QQ)$ with the following properties:

\medskip 

(a) $K\to M$ is a holomorphic vector bundle, 
$C$ is a Higgs field, $\UU$ is a holomorphic endomorphism, and 
$g$ is  a holomorphic, symmetric, non-degenerate bilinear form, such that \eqref{2.7}, \eqref{2.9} and \eqref{2.10} hold.

\medskip

(b)  $\kappa:K\to K$ is a smooth fiberwise
$\C$-antilinear automorphism,  with $\kappa^2=\id$, and
\begin{eqnarray}\label{2.25}
g(\kappa a,\kappa b)&=&\oooo{g(a,b)}.
\end{eqnarray}

\medskip

(c) $h$ is a sesquilinear, hermitian and non-degenerate pairing, related to $g$ and $\kappa$ by
\begin{equation}\label{2.25f}
h= g(.,\kappa.)
\end{equation}
and whose Chern connection $D$  satisfies
\begin{align}\label{2.26}
D(\kappa)&=0,\\
D^{(1,0)}(C)&=0,\label{2.27}\\
D^{(1,0)}D^{(0,1)}+D^{(0,1)}D^{(1,0)}&= - ( C\kappa C\kappa  + \kappa C \kappa C) .\label{2.28}
\end{align}

(d)  $\QQ: K\to K$ is a smooth  endomorphism which  satisfies
\begin{align}
&g(\QQ a,b)= -g(a,\QQ b),\label{2.29}\\
&D^{(1,0)}(\UU)-[C,\QQ]+C=0,\label{2.30}\\
&D^{(1,0)}(\QQ)+[C,\kappa\UU\kappa]=0,\label{2.31}\\
&h(\QQ a,b) = h(a,\QQ b).\label{2.32}
\end{align}
The $CV$-structure is a {\sf $CV\oplus$-structure} if $h$ is positive
definite.
\end{definition}

In  relation (\ref{2.27}) $D^{(1,0)}(C)$ (often denoted $D(C)$, for simplicity)
is defined by (\ref{saito-pot})  (with $\nabla^{r}$ replaced by $D^{(1,0)}$).  
Relation (\ref{2.28}) can be written in the following equivalent way
\begin{equation}\label{alt-curv}
R^{D}(X, \bar{Y}) + [C_{X}, \kappa C_{Y}\kappa ]=0,\quad X, Y\in {\mathcal T}_{M}
\end{equation}
where $R^{D}$ is the curvature of $D$.

\begin{remark}\label{t2.11} 
(i) The above definition is a slightly modified (but equivalent) 
version of  the usual definition of CV-structures 
(see Definition 2.16 of \cite{He03}). In Definition 2.16 of \cite{He03} 
(as opposed to the above definition)
there is no holomorphic metric $g$.
But above $g$  is determined
by $h$ and $\kappa$ by $ g = h (\cdot , \kappa \cdot ).$ 
Instead,  Definition 2.16 of \cite{He03} 
contains a new endomorphism valued 1-form $\www{C}$. 
This is determined by $\kappa$ and $C$ by  
$\www{C}=\kappa C\kappa$.

\medskip
(ii) From  \cite[Lemma 2.18 (a)]{He03}, any CV-structure
is real analytic.
\end{remark}

The next theorem states the relation between $(TERP)$-structures and CV-structures.

\begin{theorem}\label{t2.12} \cite[Theorem 2.19]{He03}
Fix $w\in\Z$. There is a natural 1-1 correspondence between
pure $(TERP)(w)$-structures 
and $CV$-structures. 
\end{theorem}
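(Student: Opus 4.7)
The plan is to mirror the strategy of Theorem \ref{t2.9} (the Saito $\leftrightarrow$ $(TLEP)$ correspondence), but making essential use of Lemma \ref{t2.10b} and the extension $\wwh{H}\to\pmmm$ described above it. Two natural maps in opposite directions will be constructed from the data already on the table; the nontrivial content is that each axiom on one side matches an identity on the other.

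For the direction $(TERP)\to CV$, start with a pure $(TERP)(w)$-structure $(H,H'_\R,\nnn,P,w)$. By Lemma \ref{t2.2} the restriction to $\nmmm$ yields $(K,C,\UU,g)$ satisfying \eqref{2.7},\eqref{2.9},\eqref{2.10}. Since $\wwh{H}$ is pure, evaluation of global sections on each $\P^1\times\{t\}$ gives two smooth (not holomorphic in $t$) trivializations, and their comparison produces a canonical $C^{\infty}$-isomorphism $K\cong\www{K}:=\wwh{H}|_{\immm}$. Composing the restriction $\tau|_{\nmmm}:K\to\www{K}$ with this isomorphism defines $\kappa:K\to K$; the identities $\tau^2=\id$ and the $(TERP)$-reality of $P$ from \eqref{2.25a-1} give $\kappa^2=\id$ and \eqref{2.25}. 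Put $h:=g(\cdot,\kappa\cdot)$, so $D(\kappa)=0$ holds tautologically for its Chern connection $D$. The holomorphic connection $D^{(1,0)}$ and the endomorphism $\QQ$ are then read off from the $z=\infty$-side via the purity identification: $D^{(1,0)}$ corresponds to the restriction of $\nnn^{res}$ (in the sense of Lemma \ref{t2.7}) to $\www{K}$, and $\QQ$ corresponds to $\VV^{res}-\tfrac{w}{2}$, both transported back to $K$.

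For the reverse direction $CV\to(TERP)$, define $H$ as the $C^{\infty}$-bundle $p^{*}K$ on $\cmmm$ equipped with the holomorphic structure whose $\oooo{\paa}$-operator is $D^{(0,1)}$ in the $M$-direction and trivial in the $z$-direction. Define
\begin{equation*}
\nnn \;:=\; D^{(1,0)}+\frac{1}{z}C+\Bigl(\frac{1}{z}\UU-\QQ+\frac{w}{2}\Bigr)\frac{\ddd z}{z}
\end{equation*}
on $p^{-1}\OO(K)\subset\OO(H)$ and $P(a,b):=z^{w}g(a,b)$ for $a,b\in K_{t}$. The CV-axioms \eqref{2.7}--\eqref{2.10},\eqref{2.27},\eqref{2.28},\eqref{2.30},\eqref{2.31} together are exactly $\nnn^{2}=0$ split according to $z$-pole order, so $(H,\nnn,P)$ is a $(TEP)(w)$-structure. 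The real subbundle $H'_{\R}$ is constructed by radial $\nnn$-parallel transport: in each half-line $\{(\zeta,t):\arg\zeta=\arg z\}$ in $\csmmm$ use $\nnn$ to transport an antilinear involution whose asymptotic data at $z=0$ is $\kappa$ and glue these along angular directions. The compatibility \eqref{2.32} and the hermitian axiom of $h$ guarantee that the resulting $\tau$ is an involution with $P(\tau a,\tau b)=(-1)^{w}\oooo{P(a,b)}$, i.e.\ \eqref{2.25a-1}, and its $+1$-eigenbundle is $H'_{\R}$. Purity of $\wwh{H}$ is then equivalent to the statement that the $\tau$-parallel extensions near $\immm$ match the holomorphic extensions of sections near $\nmmm$, and this is essentially the content of \cite[Theorem 2.19]{He03}.

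The main obstacle is the matching of \eqref{2.28}. On the CV side it is a curvature equation for $D$; on the $(TERP)$ side it becomes a statement about mixed holomorphic/antiholomorphic commutators on $\wwh{H}$, accessible only through the delicate description of $\nnn$ near $\immm$ given in Lemma \ref{t2.10b} (a pole of order at most $2$ in $z$, antiholomorphic in $t$ against the $\tau$-basis). Concretely, the commutator $D^{(1,0)}D^{(0,1)}+D^{(0,1)}D^{(1,0)}$ has to be matched, after conjugation by $\tau$ and the purity isomorphism, with the term coming from $C\kappa C\kappa +\kappa C\kappa C$; tracking how $z^{\pm 1}$-contributions cancel in the curvature is the point where the weight $w$ and the factor $i^{w}$ in \eqref{2.25a} enter. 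Once this is checked, all remaining axioms follow by direct comparison of coefficients in the pole expansions, and the two constructions are manifestly inverse, establishing the bijection.
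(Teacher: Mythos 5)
Your reverse direction ($CV\to(TERP)$) contains a genuine error, and it is precisely the direction the paper actually writes out. You equip $p^{*}K$ with the holomorphic structure given by $D^{(0,1)}$ alone and take $\nnn = D^{(1,0)}+\tfrac1z C+(\tfrac1z\UU-\QQ+\tfrac w2)\tfrac{\ddd z}{z}$; this is just the Saito-structure formula \eqref{2.23} with $\nnn^r,\VV$ replaced by $D^{(1,0)},\QQ$. For this to define a flat holomorphic connection you would need the $(1,1)$-curvature of $D$ to vanish, but the $tt^*$-equation \eqref{2.28} says exactly that $R^{D}(X,\bar Y)=-[C_X,\kappa C_Y\kappa]$, which is nonzero in general. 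So \eqref{2.28} is not a ``matching obstacle'' to be overcome at the end, as you suggest; with your definitions it kills the construction at the first step. The paper's connection \eqref{2.33} carries two extra terms, $z\kappa C\kappa$ in the $M$-directions and $z\kappa\UU\kappa$ in the $z$-direction, and the holomorphic structure is the kernel of $\wwh\nnn^{(0,1)}_{\bar X}=D^{(0,1)}_{\bar X}+z\kappa C_X\kappa$, not of $D^{(0,1)}$. It is only after this twist that flatness of $\wwh\nnn$ becomes equivalent to the full list of CV-axioms, and these same terms are what produce a pole of order $\leq 2$ at $\immm$ and hence the pure extension $\wwh H\to\pmmm$.

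A second, related problem is your construction of the real structure by parallel transport of ``an antilinear involution whose asymptotic data at $z=0$ is $\kappa$'': the pole at $\nmmm$ is irregular, so there is no well-defined limit of parallel transport at $z=0$ to anchor this. The paper instead defines $\tau_{(z,t)}:\wwh H_{(z,t)}\to\wwh H_{\gamma(z,t)}$ fiberwise as $\kappa$ via the canonical $C^\infty$-identifications of both fibers with $K_t$ (using that $\wwh H=p^{*}K$), and then obtains $(\kappa_H)_{(z,t)}$ by $\nnn$-parallel transport of $\tau(z^{w}a)$ from $\gamma(z,t)$ back to $(z,t)$ along the half-line in $\C^{*}$ --- transport entirely away from the singular fibers. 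Your forward direction is closer in spirit to what is needed (purity gives a global trivialization over $\P^1\times\{t\}$ from which $\kappa$, $h$, $D$, $\QQ$ are read off), but even there the identification of $D^{(1,0)}$ and $\QQ$ with $\nnn^{res}$ and $\VV^{res}-\tfrac w2$ via Lemma \ref{t2.7} is not available: that lemma requires a logarithmic pole at $\immm$, whereas Lemma \ref{t2.10b} only gives a pole of order $\leq 2$ there, so the data must be extracted from the full connection matrix in the global $\P^1$-trivialization.
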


{\bf Part of the proof:} 
To keep the text short,  we only   describe the construction of a $(TERP)(w)$-structure
from a $CV$-structure. Let 
$(K,C,\UU,g,\kappa,h,D,\QQ)$ be a CV-structure.
Lift $C,\UU,\QQ,D$ and $\kappa$ canonically
to $\wwh H := p^{*}K$ (considered as a  $C^{\infty}$-bundle). Define a connection $\wwh\nnn$ on $\wwh H$ by 
\begin{eqnarray}\label{2.33}
\wwh\nnn a :=
\left(D+\frac{1}{z}C+z\kappa C\kappa +
\left(\frac{1}{z}\UU-\QQ+\frac{w}{2}+z\kappa\UU\kappa\right)
\frac{\ddd z}{z}\right)(a),
\end{eqnarray}
for sections $a\in p^{-1}\Gamma (K)\subset \Gamma (\wwh
H)$.  Let $H:=\wwh H_{|\C\times M}$ with the
holomorphic structure  given as the kernel of the operator $\wwh\nnn^{(0,1)}$, where 
$$
\wwh\nnn^{(0,1)}_{\bar{X}}(a)= D^{(0,1)}_{\bar{X}}(a) + z \kappa C_{X} \kappa (a),\quad   X\in {\mathcal T}^{1,0}_{M},\  a\in p^{-1}\Gamma (H).
$$
Then $\wwh{\nabla}^{(1,0)}$ is  a holomorphic connection on $H$.
Define the pairing $\wwh{P}$ as in \eqref{2.24}.  
For any $(z,t)\in\P^1\times M$, define
\begin{eqnarray}\label{2.33b}
\tau_{(z,t)}:\wwh{H}_{(z,t)}&\to& \wwh{H}_{\gamma(z,t)},\ 
a\rightarrow \kappa(a).
\nonumber
\end{eqnarray}
Here we identify, in the canonical way, $K_{t}$ with $\widehat{H}_{(z,t)}$ 
and $\wwh{H}_{\gamma(z,t)}$.
For $(z,t)\in\C^*\times M$ define a real structure on $\wwh{H}_{(z,t)}$
with the $\C$-antilinear involution 
$$(\kappa_H)_{(z,t)}:\wwh{H}_{(z,t)}\to \wwh{H}_{(z,t)},$$
where, for any $a\in \wwh{H}_{(z,t)}$, $(\kappa_H)_{(z,t)}(a)$ is the
$\nnn$-parallel transport of $\tau(z^wa)\in \wwh{H}_{\gamma(z,t)}$
along the real half line $\{(\zeta,t)\in\C^*\times M\, |\, \arg\zeta=\arg z\}$.
Then $(H\rightarrow \mathbb{C}\times M, \widehat{\nabla}, \widehat{P},\kappa_{H})$ is a $(TERP)(w)$-structure. 
\hfill$\Box$

\subsection{Compatibilities between CV and Saito structures}\label{s2.4}
Let $(K,C ,\UU  ,g,\nnn^{r},\VV)$ and $(K,C,\UU,g,\kappa,h,D,\QQ)$
be a Saito and a CV-structure, which share the same data
$(K\rightarrow M, C, \UU , g).$ 
Let $(p^{*}K, \nabla , P)$ be the $(TEP)(w)$-structure determined 
by the Saito structure $(K,C ,\UU  ,g,\nnn^{r},\VV)$. 
Similarly, let  $(p^{*}K, \widehat{\nabla}, \widehat{P})$ 
be the $(TEP)(w)$-structure which underlies the $(TERP)(w)$-structure 
determined by the $CV$-structure $(K,C,\UU,g,\kappa,h,D,\QQ)$.
We ask to which extent these two $(TEP)$-structures  
(or rather their restrictions to $(\C , 0)\times M$) are isomorphic.

From Theorem \ref{t2.5}, 
when $\UU$ is semisimple 
(with different eigenvalues) and $M$ is simply connected, 
$(p^{*}K, \nabla , P)$  and  $(p^{*}K, \widehat{\nabla}, \widehat{P})$
are formally isomorphic. 
(But holomorphic isomorphism is stronger, 
as  it requires that they have the same Stokes structures).
Our expectations in cases different from the semisimple case will be discussed
in Conjecture \ref{t3.5} (in Subsection \ref{s3.1}).

The next definition is weaker than Definition \ref{equiv-f}:  
the $A(\infty )$-condition from Definition \ref{var-equiv} 
is equivalent to the existence of a formal isomorphism  between   
the two $(TEP)$-structures,  as in  Definition \ref{equiv-f}.

\begin{definition} \label{var-equiv} (a) The $(1,0)$-parts  $\nabla^{(1,0)}$ and $\wwh{\nabla}^{(1,0)}$ 
are  {\sf formally isomorphic up to order  $(k-1)$}  if and only if there is a formal automorphism 
\begin{eqnarray}\label{psi-form}
\Psi := \mathrm{id} + z\cdot A_{1} + z^{2}\cdot A_{2} + \cdots ,\quad A_{i} \in \Gamma ( \mathrm{End}(K))
\end{eqnarray}
such that  
 \begin{eqnarray*}
 \wwh{\nabla}_{X}  (\Psi (a)) = \Psi \nabla_{X}(a),\ \wwh{\nabla}_{U}(\Psi (a)) = \Psi \nabla_{U}(a)
 \end{eqnarray*}
 hold  modulo  $z^{k}{\mathcal O}[[z]]$, for any $X\in {\mathcal T}_{M}$ and $a\in p^{-1}\Gamma (K).$ If, in addition, 
 $$
 z^{-w-1} P(\Psi (a), \Psi (b)) = z^{-w-1} P(a,b)\ \mathrm{mod}\ z^{k} {\mathcal O}[[z]],\quad \forall \ a, b\in p^{-1}{\Gamma}(K)
 $$
 then the pairs $(\nabla^{(1,0)}, P)$ and $(\wwh{\nabla}^{(1,0)},\wwh{P})$ 
are  {\sf formally isomorphic up to order  $(k-1)$} .

 \medskip
  
(b) The holomorphic structures $\nabla^{(0,1)}$ and $\widehat{\nabla}^{(0,1)}$ are {\sf  formally isomorphic up to order $(k-1)$ }
if and only if there is  a formal automorphism $\Psi$ as in (\ref{psi-form}),  such that, for any $X\in {\mathcal T}^{1,0}_{M}$ and $a\in p^{-1} {\Gamma}(K)$, 
\begin{eqnarray}\label{2.35}
\wwh\nnn_{\bar{X}} ( \Psi (a))= \Psi ( \nnn_{\bar{X}}(a))\  \mathrm{mod}\  z^{k}{\mathcal O}[[z]].
\end{eqnarray}

(c) The $(TEP)$-structures $(p^{*}(K), \nabla , P)$ and $(p^{*}(K), \wwh{\nabla}, \wwh{P})$  are {\sf formally isomorphic up to order $(k-1)$}
(or the {\sf $A(k-1)$-condition} holds)  if  and only if the pairs
$(\nabla^{(1,0)}, P)$ and $(\wwh{\nabla}^{(1,0)},\wwh{P})$ 
are  formally isomorphic up to order  $(k-1)$ and the holomorphic structures $\nabla^{(0,1)}$ and $\widehat{\nabla}^{(0,1)}$
are formally isomorphic up to order $k$ (by the same formal isomorphism).

\medskip

(d) The $(TEP)$-structures $(p^{*}(K) , \nabla , P)$ and $(p^{*}(K), \wwh{\nabla}, \wwh{P})$  are {\sf isomorphic}
(or the  {\sf $A(hol)$-condition} holds)  if  and only if the $A(\infty )$-condition holds and 
the formal isomorphism is convergent.
 \end{definition}

In this section we are concerned with formal equivalences. (The holomorphic equivalence will be discussed in the next section,
 in relation with Conjecture \ref{t3.5}). Since the Saito and CV-structures share the same data $(K, C, \mathcal U , g)$, the condition $A(-1)$ holds
(easy check). As discussed in \cite{Sa08}, 
the condition $\nabla^{(0,1)}$ and $\widehat{\nabla}^{(0,1)}$-formally isomorphic up to order $1$ leads to the so-called harmonic potential Higgs structures. 
We now recall the notion of harmonic potential real Saito bundle 
(defined for the first time in  \cite[Section 1.d]{Sa08}) and we show that it is equivalent to the $A(0)$-condition above.

\begin{definition}\label{harm}
The union of a Saito structure 
$(K,C, \UU , g, \nabla^{r}, \VV )$ and a $CV$-structure
$(K,C, \UU , g, \kappa , h, D, \QQ )$, which share the same data $(K,C,\UU,g)$, 
forms a  {\sf harmonic potential real Saito structure}, if there is 
a $g$-symmetric (not necessarily holomorphic) endomorphism $A\in \Gamma (\mathrm{End}(K))$ (called the {\sf potential}), such that 
\begin{eqnarray}\label{def-harm}
D=\nnn^r- [A^{\flat},C],\quad
\QQ=\VV-[\UU , A^{\flat}],\quad D^{(0,1)} ( A^{\flat}) = \kappa C \kappa .
\end{eqnarray}
\end{definition}

\begin{remark}\label{simpler}
Consider a Saito structure and a CV-structure, 
with the same data $(K\rightarrow M, C, \UU , g).$ 
We  claim that if $A$ is any (smooth) endomorphism of $K$, which satisfies
(\ref{def-harm}), then its $g$-symmetric part $\frac{1}{2} ( A + A^{*})$ also satisfies (\ref{def-harm})
(in particular, the $g$-symmetry condition from the above definition is not essential).
To prove the claim, 
we suppose that such an $A$ is given.
The $h$-hermitian adjoint $\tilde{A}:= A^{\flat}$ satisfies
\begin{eqnarray}\label{def-harm-1}
D=\nnn^r- [\tilde{A},C],\quad
\QQ=\VV-[\UU , \tilde{A}],\quad D^{(0,1)} ( \tilde{A}) = \kappa C \kappa .
\end{eqnarray}
Since $D(g) = \nabla^{r}(g)=0$, $[\tilde{A}, C_{X}]$ is $g$-skew-symmetric, for any $X\in TM.$ 
Using that $C_{X}$ is $g$-symmetric, we obtain that $[C_{X}, \tilde{A} - \tilde{A}^{*}]=0.$ A similar argument (which uses the fact that both 
$\QQ$ and $\VV$ are $g$-skew-symmetric and $\UU$  is  $g$-symmetric) shows that $[\UU , \tilde{A} - \tilde{A}^{*}]=0.$
We proved that if the first two relations (\ref{def-harm-1}) are satisfied by $\tilde{A}$, then they are also satisfied by its $g$-symmetric part
$\frac{1}{2} ( \tilde{A} + \tilde{A}^{*}).$ We consider now the third relation (\ref{def-harm-1}).
It is equivalent to 
\begin{equation}\label{relation-1}
\bar{X} g (\tilde{A}(s_{1}), s_{2} ) = g( \kappa C_{X}\kappa (s_{1}), s_{2}),\quad  s_{1},s_{2}\in {\mathcal O} (K),\ X\in {\mathcal T}_{M},  
\end{equation}
or (by interchanging $s_{1}$ with $s_{2}$) to 
\begin{equation}\label{relation-2}
\bar{X} g (s_{2}, \tilde{A}^{*}(s_{1}) ) = g( \kappa C_{X}\kappa (s_{2}), s_{1}),\quad  s_{1},s_{2}\in {\mathcal O} (K),\ X\in {\mathcal T}_{M}.
\end{equation}
Using that 
$g(\kappa C_{X} \kappa (s_{1}), s_{2})$ is symmetric in $s_{1}$ and $s_{2}$ (being equal to
$\overline{ g( C_{X}\kappa (s_{1}), \kappa (s_{2})}$) and that $g$ is symmetric,
we obtain, from 
(\ref{relation-1}) and 
(\ref{relation-2}), that
\begin{equation}\label{relation-3}
\bar{X} g ((\tilde{A} - \tilde{A}^{*})(s_{1}), s_{2} ) = 0,\quad  s_{1},s_{2}\in {\mathcal O} (K).   
\end{equation}
We deduce  that if the third relation (\ref{def-harm-1}) 
(or equivalently, relation (\ref{relation-1})) 
is satisfied by $\tilde{A}$, then it is satisfied also by $\frac{1}{2} ( \tilde{A} + \tilde{A}^{*}).$
From $(B^{\flat})^{*} = (B^{*})^{\flat}$, for any  $B\in \Gamma (\mathrm{End}(K))$, we obtain the claim.
\end{remark}

\begin{proposition}\label{pot-cond}  A  CV-structure 
$(K,C, \UU , g, \kappa , h, D, \QQ )$
together with a Saito structure $(K,C, \UU , g, \nabla^{r}, \VV )$, 
which share the same data $(K\to M, C, \UU , g)$,  form a harmonic potential real Saito structure 
if and only if  the  $A(0)$-condition holds. 
\end{proposition}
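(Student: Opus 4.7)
The plan is to expand the four defining equations of the $A(0)$-condition at first order in $z$, using $\Psi = \id + z A_1 \bmod z^2$ with $A_1 \in \Gamma(\mathrm{End}(K))$, and to match the resulting identities on $A_1$ with the three relations in (\ref{def-harm}) via the substitution $A^\flat = -A_1$.

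Concretely, I would first substitute (\ref{2.23}) and (\ref{2.33}) into
\[
\wwh{\nnn}_\xi \circ \Psi \;=\; \Psi \circ \nnn_\xi, \qquad \xi \in \{X, \bar X, U\},\ X\in\TT^{1,0}_M,
\]
and compare the coefficients of $z^{-1}, z^0, z^1$. The $z^{-1}$ terms cancel automatically since both structures share $(C,\UU)$. Matching the remaining coefficients (orders $z^0$ in the $X$- and $U$-equations, order $z^1$ in the $\bar X$-equation) produces
\begin{align*}
D_X - \nnn^r_X &= [A_1, C_X], \\
\QQ - \VV &= [\UU, A_1], \\
D^{(0,1)}_{\bar X}(A_1) &= -\kappa\, C_X\, \kappa .
\end{align*}
For the pairing condition I would expand $P(\Psi a, \Psi b)(z) = z^w g\!\left((\Psi a)(z), (\Psi b)(-z)\right)$ modulo $z^{w+2}$; the sign flip $z \mapsto -z$ in the second slot of $P$ yields $g(A_1 a, b) - g(a, A_1 b) = 0$, i.e.\ that $A_1$ is \emph{$g$-symmetric}.

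It then remains to translate these four identities on $A_1$ into the three conditions on the $g$-symmetric potential $A$ in (\ref{def-harm}). Using $h = g(\cdot, \kappa\cdot)$ and $\kappa^2 = \id$, a direct computation shows that the $g$-adjoint $B^*$ and the $h$-adjoint $B^\flat$ of an endomorphism $B$ are related by $B^\flat = \kappa B^* \kappa$. Define $A := -\kappa A_1 \kappa$; then $A^\flat = -A_1$. The identity $g(\kappa a, \kappa b) = \overline{g(a,b)}$ from (\ref{2.25}) shows that the map $B \mapsto \kappa B \kappa$ preserves $g$-symmetry, so $A$ is $g$-symmetric. Substituting $A_1 = -A^\flat$ into the three differential identities above reproduces verbatim the three relations of (\ref{def-harm}), proving ``$A(0) \Rightarrow$ harmonic potential real Saito structure''. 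The converse direction reverses this computation: given a $g$-symmetric potential $A$, set $A_1 := -A^\flat$ (still $g$-symmetric, since $A^\flat = \kappa A \kappa$) and check that $\Psi = \id + z A_1$ satisfies all four requirements of $A(0)$ order by order.

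The main obstacle is purely bookkeeping: (i) verifying that the sign flip $b(z) \mapsto b(-z)$ in the definition of $P$ forces $g$-symmetry and not $g$-antisymmetry of $A_1$; and (ii) checking that the conversion $A \leftrightarrow A^\flat$ through conjugation by $\kappa$ preserves $g$-symmetry via (\ref{2.25}). Once these are settled, no genuine integrability argument is required, since the $A(0)$-condition only constrains the first two Taylor coefficients of $\Psi$.
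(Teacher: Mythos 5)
Your proposal is correct and follows essentially the same route as the paper's proof: write out both meromorphic connections via (\ref{2.23}) and (\ref{2.33}), match coefficients of $z$ in the four $A(0)$ relations to obtain $D=\nnn^r+[A_1,C]$, $\QQ=\VV+[\UU,A_1]$, $D^{(0,1)}(A_1)=-\kappa C\kappa$ and the $g$-symmetry of $A_1$, and identify the potential as $-A_1^\flat$. Your $A=-\kappa A_1\kappa$ coincides with the paper's $-A_1^\flat$ because $A_1^\flat=\kappa A_1^{*}\kappa=\kappa A_1\kappa$ once $A_1$ is $g$-symmetric, so the two formulations agree.
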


{\bf Proof:} 
Let $(p^{*}(K), \nabla , P)$ be the $(TEP)(w)$-structure associated  to the Saito structure. Thus, 
$p^{*}(K)$ is a holomorphic bundle (the pull-back of the holomorphic bundle $K$ by the holomorphic map $p$). We denote by $\bar{\partial}$ its
d-bar operator.  The connection   $\nabla$ (viewed as a $(1,0)$-connection)
is given by
$$
\nabla_{X} (a) = \nabla^{r}_{X}(a) +\frac{1}{z} C_{X}(a),\ \nabla_{U}(a) = (\frac{1}{z} \UU - \VV +\frac{w}{2} )(a), 
$$
for any $X\in {\mathcal T}^{1,0}_{M}$ and $a\in p^{-1} \Gamma (K)$
(as usual, we identify data on $K$ with its natural lift to $p^{*}(K)$).

Similarly, let $(p^{*}(K), \wwh{\nabla}, \wwh{P})$ be the $(TEP)(w)$-structure which underlies 
the $(TERP)(w)$-structure associated to the CV-structure. Here $p^{*}(K)$ is considered with the holomorphic structure given by the kernel of the map
$$
\widehat{\nabla}_{\bar{X}}(a) = D_{\bar{X}}(a)+ z \kappa C_{X} \kappa (a),\quad a\in p^{-1}\Gamma (K),\ X\in {\mathcal T}^{1,0}_{M}.
$$ The $(1,0)$-connection determined by $\widehat{\nabla}$ is given by
$$
\widehat{\nabla}_{X}(a)= D_{X}(a) +\frac{1}{z} C_{X}(a),\ \widehat{\nabla}_{U}(a) = (\frac{1}{z} \UU - \QQ +\frac{w}{2} + z \kappa \UU \kappa )(a),
$$
for any $X\in {\mathcal T}^{1,0}_{M}$ and $a\in p^{-1} \Gamma (K).$
We remark that $ D^{(0,1)}_{\bar{X}}(a)= \bar{\partial}_{\bar{X}}(a) $ ($D$ is a Chern connection on $K$) and both vanish when $a\in p^{-1} {\mathcal O}(K)$. 
The pairings $P$ and $\widehat{P}$ coincide and are given  by
$$
P(a,b)(z, t)= \widehat{P}(a,b)(z,t)= z^{w} g(a,b),\quad a,b\in p^{-1} \Gamma (K),\ (z,t)\in \C \times M.
$$
By definition,  the $A(0)$-condition holds if and only if 
there is a formal automorphism 
$\Psi := \mathrm{id} + z\cdot A_{1} + z^{2}\cdot A_{2} + \cdots$, with  $A_{i} \in \Gamma ( \mathrm{End}(K))$, 
such that, for any  $a\in p^{-1}{\Gamma}(K)$ and $X\in {\mathcal T}_{M}$, 
\begin{align}
\nonumber\wwh{\nabla}_{X}  (\Psi (a)) = \Psi \nabla_{X}(a)\ \mathrm{mod} z{\mathcal O}[[z]],\\
\nonumber\wwh{\nabla}_{U}(\Psi (a)) = \Psi \nabla_{U}(a)\ \mathrm{mod} z {\mathcal O}[[z]],\\
\label{cond-det}\wwh{\nabla}_{\bar{X}} \psi (a ) = \psi \bar{\partial}_{\bar{X}} (a)\ \mathrm{mod} z^{2}{\mathcal O}[[z]]
\end{align}
and, for any $a, b\in p^{-1}{\Gamma}(K)$,
\begin{equation}\label{cond-ad}
z^{-w-1} P(\Psi (a), \Psi (b)) = z^{-w-1} P(a,b)\ \mathrm{mod}\ z {\mathcal O}[[z]]. 
\end{equation}
By identifying coefficients in $z$, we obtain that the first relation (\ref{cond-det}) is equivalent to 
$D_{X} = \nabla^{r}_{X} + [A_{1}, C_{X}].$ The second relation (\ref{cond-det}) is equivalent to
$\QQ = \VV + [\mathcal U , A_{1}]$ and the third to  
$$
\bar{\partial}_{\bar{X}} (A_{1}(a) ) - A_{1} \bar{\partial}_{\bar{X}} (a) + \kappa C_{X}\kappa (a) =0,
$$
i.e. to  $D^{(0,1)}_{\bar{X}} (A_{1}) = - \kappa C_{X}\kappa $. Finally, condition (\ref{cond-ad}) 
is equivalent to $g(A_{1}(a), b) = g (a, A_{1}(b)).$  
Our claim follows. A potential for the resulting harmonic potential real Saito bundle is  $- {A}_{1}^{\flat}$.
\hfill$\Box$

\medskip

In the semisimple case, the various formal equivalences considered in Definition \ref{var-equiv}  are the same. More precisely, the following holds.

\begin{corollary}\label{t2.18}
Consider a   CV-structure together with a Saito structure, with the same holomorphic vector bundle $K \to M$ over a simply connected manifold $M$,  
such that the associated $(TEP)$-structures are semisimple.  Then the $A(-1)$ condition implies all higher order $A(k)$-conditions (for any $0\leq k\leq \infty$). 
If it holds, the  CV-structure and the Saito structure  form
a harmonic potential real Saito structure.
\end{corollary}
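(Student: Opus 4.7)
The strategy is to exploit the formal rigidity of semisimple $(TEP)$-structures (Theorem \ref{t2.5}(a)) in order to produce a formal isomorphism whose leading term at $z=0$ is $\mathrm{id}$; this upgrades $A(-1)$ directly to $A(\infty)$, and Proposition \ref{pot-cond} applied to $A(0)$ then yields the harmonic potential real Saito structure. First, I unpack the $A(-1)$-condition: as noted in the paper, it is equivalent to the two $(TEP)(w)$-structures $(p^{*}K, \nabla, P)$ (from the Saito structure) and $(p^{*}K, \widehat{\nabla}, \widehat{P})$ (from the CV-structure) inducing via Lemma \ref{t2.2} the same tuple $(K, C, \UU, g)$ at $z=0$. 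Together with simple connectivity and semisimplicity (with common eigenvalues of $\UU$), Theorem \ref{t2.5}(a) then furnishes exactly $2^{n}$ formal isomorphisms $\Psi_{1}, \ldots, \Psi_{2^{n}}$ between these $(TEP)$-structures, where $n = \rank K$; each restriction $\sigma_{j} := \Psi_{j}|_{z=0}$ is an automorphism of the shared data $(K, C, \UU, g)$.

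The main step is to match the two enumerations. Since $\UU$ is $g$-symmetric with pointwise distinct eigenvalues, $K$ splits on the simply connected $M$ as a $g$-orthogonal direct sum of $n$ line bundles $K_{1} \oplus \cdots \oplus K_{n}$, each non-isotropic; any automorphism preserving $\UU$ must respect this decomposition, preservation of $g$ then forces it to act by $\pm 1$ on each $K_{i}$, and commutation with $C$ is automatic from $[C,\UU] = 0$. Hence $\Aut(K, C, \UU, g)$ has cardinality $2^{n}$. On the other hand, the formal Hukuhara--Turrittin structure underlying Theorem \ref{t2.5}(a) (and exemplified in Example \ref{t2.4}(iii)) implies that the only formal $(TEP)$-automorphism with leading term $\mathrm{id}$ is $\mathrm{id}$ itself; thus any two of the $\Psi_{j}$ with the same leading term must coincide. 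The map $j \mapsto \sigma_{j}$ is therefore an injection of $\{1,\ldots,2^{n}\}$ into a $2^{n}$-element group, hence a bijection, and in particular some $\Psi_{j_{0}}$ satisfies $\sigma_{j_{0}} = \mathrm{id}$.

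This $\Psi_{j_{0}} = \mathrm{id} + z A_{1} + z^{2} A_{2} + \cdots$ preserves $\nabla$, $\nabla_{U}$ and $P$ as formal power series in $z$, which is precisely the $A(\infty)$-condition of Definition \ref{var-equiv}(c) (the pairing clause being automatic once $\widehat{P}(\Psi a,\Psi b)=P(a,b)$ holds exactly); consequently $A(k)$ holds for every $0 \le k \le \infty$. Proposition \ref{pot-cond} applied to the resulting $A(0)$-condition then exhibits the Saito and CV-structures as a harmonic potential real Saito structure, with potential $-A_{1}^{\flat}$. The principal obstacle is the matching in the second paragraph: pairing the formal isomorphisms supplied by Theorem \ref{t2.5}(a) with the elements of $\Aut(K,C,\UU,g)$. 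It relies on the simultaneous diagonalisability of $\UU$ and every $C_{X}$ in the semisimple regime, together with the non-degeneracy of $g$ on each $\UU$-eigenline; both become inessential outside the semisimple case, where the rigidity provided by Theorem \ref{t2.5}(a) breaks down.
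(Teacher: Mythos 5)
Your proof is correct and follows essentially the same route as the paper: the paper's own proof of Corollary \ref{t2.18} simply observes that the $A(-1)$-condition forces the two $(TEP)$-structures to share the same $(C,\UU,g)$, hence to satisfy the hypotheses of Theorem \ref{t2.5}(a), and then cites Proposition \ref{pot-cond} for the harmonic potential statement. The only difference is that you additionally spell out the (correct) counting argument identifying $\Aut(K,C,\UU,g)\cong\{\pm 1\}^{n}$ and showing that one of the $2^{n}$ formal isomorphisms has leading term $\mathrm{id}$ --- a point the paper leaves implicit even though its $A(k)$-conditions do require the leading term to be the identity.
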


\begin{proof}  
As already stated above, the $A(-1)$-condition means that the CV-structure and the Saito structure share the same data $(C, \mathcal U , g).$ In particular, the operator 
$\mathcal U$ (and also its eigenvalues) of the two $(TEP)$-structures is the same.
It is  semisimple (with different eigenvalues) and $M$ is simply connected. Therefore,  the hypothesis of Theorem \ref{t2.5} holds. The first claim follows from
Theorem \ref{t2.5}. The second claim follows from Proposition
\ref{pot-cond}.
\end{proof}

\section{Putting together Frobenius manifolds and $tt^*$ geometry}
\label{s3}
\setcounter{equation}{0}

\subsection{$F$-manifolds}\label{s3.1}

\noindent In this section we recall the relation between $(TEP)$-structures with the unfolding condition (see Definition \ref{t3.2} (b) below) and $F$-manifolds.  
We discuss a strong (if true)
conjecture from \cite{DH16} about $(TEP)$-structures with the
unfolding condition above a given $F$-manifold.

\begin{definition}\label{t3.1} \cite{HM99}\cite{He02}
A manifold $M$ with a commutative, associative
multiplication $\circ$ on the holomorphic
tangent bundle $TM$ and  a unit field $e\in\TT_M$ is an
{\sf $F$-manifold} if the multiplication satisfies
\begin{eqnarray}\label{3.1}
L_{X\circ Y}(\circ) =X\circ  L_Y(\circ)+Y\circ  L_X(\circ),\ X, Y\in {\mathcal T}_{M}.
\end{eqnarray}
A vector field $E\in\TT_M$ is called an {\sf Euler field of weight
$d\in\C$} if
\begin{eqnarray}\label{3.2}
L_E(\circ) =d\cdot \circ.
\end{eqnarray}
An Euler field of weight 1 is simply called an {\sf Euler field}.
\end{definition}

\bigskip

The $F$-manifolds we are interested in always come with a fixed Euler field. For this reason and to simplify terminology,
from now on by an $F$-manifold we always mean an $F$-manifold with a given Euler field.

\bigskip

\begin{definition}\label{t3.2}
(a) Let $(K, C,\UU)$ be a tuple where $K\to M$ is a
holomorphic vector bundle, $C$ is a Higgs field and
$\UU$ is a holomorphic endomorphism such that \eqref{2.7} is satisfied.
The tuple $(K, C, \UU )$ 
satisfies the {\sf unfolding condition} if
there is  a section $\zeta \in {\mathcal O}(K)$ (called primitive)
such that
\begin{equation}\label{def-I}
I= -C_\bullet\zeta:\TT_M\to\OO(K),\quad I(X):= - C_{X}(\zeta )
\end{equation}
is an isomorphism.

\medskip

(b) A $(TEP)$-structure on $\C\times M$ satisfies the {\sf
unfolding condition} if the induced data $(K\to M,C,\UU)$ (see Lemma
\ref{t2.2}) satisfies  it.
\end{definition}

\bigskip

The following lemma is \cite[Theorem 3.3]{HHP10}.

\begin{lemma}\label{t3.3}
A ($TEP)$-structure $(H \to \C \times M, \nabla , P)$  with the unfolding condition
induces the structure of an $F$-manifold  $(M,\circ ,e ,E)$. More precisely, let $(K\to M,C,\UU)$ be the restriction of $H$
to $\nmmm$, with Higgs field $C$ and endomorphism $\UU$ (see Lemma \ref{t2.2}). 
The multiplication $\circ$,  unit field $e$ and 
Euler field $E$  
are determined by: 
\begin{equation}\label{str}
C_{X\circ Y} (\zeta) = - C_{X}C_{Y}(\zeta),\  C_e (\zeta )=-\zeta,\ C_E(\zeta ) =- \UU (\zeta) ,
\end{equation}
for any $X, Y\in {\mathcal T}_{M}$.
\end{lemma}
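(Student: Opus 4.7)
The plan is to use the isomorphism $I:\TT_M \to \OO(K)$, $X\mapsto -C_X(\zeta)$, supplied by the primitive section, to transport the algebra generated by the endomorphisms $C_X$ back to $TM$. I set
\begin{eqnarray*}
X\circ Y := I^{-1}\bigl(C_X C_Y(\zeta)\bigr),\quad e := I^{-1}(\zeta),\quad E := I^{-1}\bigl(\UU(\zeta)\bigr),
\end{eqnarray*}
which are equivalent (after applying $I$) to the three formulas in \eqref{str}. Since $I$ is an isomorphism of holomorphic vector bundles, these structures are well-defined.

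Commutativity of $\circ$ is immediate from $C_X C_Y = C_Y C_X$. For associativity, the pivotal observation is that the relation $C_{X\circ Y}(\zeta) = -C_X C_Y(\zeta)$ can be promoted to the endomorphism identity $C_{X\circ Y} = -C_X C_Y$ in $\End(K)$. This I would verify by evaluating both sides on an arbitrary section $a = -C_Z(\zeta)$ (possible by surjectivity of $I$) and computing
\begin{eqnarray*}
C_{X\circ Y}(-C_Z(\zeta)) = -C_Z C_{X\circ Y}(\zeta) = C_Z C_X C_Y(\zeta) = -C_X C_Y(-C_Z(\zeta)),
\end{eqnarray*}
using $[C_Z, C_{X\circ Y}]=0$ and commutativity of the $C_\bullet$'s throughout. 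Associativity then reads $C_{(X\circ Y)\circ Z} = -C_{X\circ Y}C_Z = C_X C_Y C_Z = C_{X\circ (Y\circ Z)}$, and evaluation on $\zeta$ combined with injectivity of $I$ yields $(X\circ Y)\circ Z = X\circ (Y\circ Z)$. The unit property $X\circ e = X$ drops out directly from $C_e(\zeta) = -\zeta$.

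The main content, and the step I expect to be the principal obstacle, is the Hertling-Manin integrability \eqref{3.1} together with the Euler identity $L_E(\circ) = \circ$ of weight one. For these, the Higgs data on $K$ alone is not enough; one must invoke the flatness of the full meromorphic connection $\nabla$ of the $(TEP)$-structure. Writing $\nabla_X = \frac{1}{z}\mathcal{C}_X + \mathcal{B}_X$ in local coordinates near $z=0$, with $\mathcal{C}_X$ and $\mathcal{B}_X$ holomorphic in $z$ and $\mathcal{C}_X|_{z=0}=C_X$, the flatness $[\nabla_X,\nabla_Y]=\nabla_{[X,Y]}$ yields at order $z^{-1}$ a potentiality-type relation for $C$ with respect to the non-canonical regular part $\mathcal{B}$, while the mixed flatness $[\nabla_U,\nabla_X]=0$ together with $[C,\UU]=0$ controls $\UU$. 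Applying $I$ and using the endomorphism identity $C_{X\circ Y} = -C_X C_Y$ to rewrite product vector fields, the Lie derivatives in \eqref{3.1} unfold in terms of derivatives of $C$. The key bookkeeping is to check that the non-canonical contributions from $\mathcal{B}$ cancel on both sides of \eqref{3.1} thanks to the commutativity of the $C_\bullet$'s; once they do, \eqref{3.1} and the Euler identity reduce to identities among iterated commutators that hold automatically from \eqref{2.7}.
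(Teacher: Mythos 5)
The first half of your argument is correct and well executed: transporting the algebra via $I$, promoting $C_{X\circ Y}(\zeta)=-C_XC_Y(\zeta)$ to the endomorphism identity $C_{X\circ Y}=-C_XC_Y$ by evaluating on sections of the form $-C_Z(\zeta)$, and deducing commutativity, associativity and the unit property. Note that the paper gives no proof of this lemma; it quotes it as \cite[Theorem 3.3]{HHP10}, and the route taken there is the one you outline: flatness of $\nabla$ at order $z^{-1}$ yields a potentiality relation $\tilde{D}_X(C_Y)-\tilde{D}_Y(C_X)-C_{[X,Y]}=0$ for a connection $\tilde{D}$ built from the regular part of $\nabla$ in a local trivialization, and $[\nabla_U,\nabla_X]=0$ yields a relation of the form $\tilde{D}_X(\UU)-[C_X,\tilde{\VV}]+C_X=0$.

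However, your treatment of what you yourself call the main content is a plan rather than a proof, and its concluding claim is wrong as stated. The Hertling--Manin condition \eqref{3.1} and the identity $L_E(\circ)=\circ$ do \emph{not} reduce, after the $\mathcal{B}$-terms ``cancel'', to ``identities among iterated commutators that hold automatically from \eqref{2.7}'': condition \eqref{2.7} together with $C_e=-\id$ and the unfolding condition is satisfied by \emph{any} commutative associative multiplication with unit on $TM$ (take $K=TM$, $\zeta=e$, $C_X=-X\circ$, $\UU=E\circ$ for an arbitrary $E$), including multiplications that violate \eqref{3.1}; so no commutator algebra can produce \eqref{3.1} from \eqref{2.7}. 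The contributions of the regular part $\mathcal{B}$ do not cancel by commutativity of the $C_\bullet$'s --- the order-$z^{-1}$ potentiality relation must be used in an essential, non-cancelling way, and converting it (together with $C_{X\circ Y}=-C_XC_Y$) into \eqref{3.1}, and converting $\tilde{D}_X(\UU)-[C_X,\tilde{\VV}]+C_X=0$ into $L_E(\circ)=\circ$, is precisely the computation that constitutes the proof (carried out in \cite{HM04} and \cite{HHP10}; cf.\ also \cite{He02} for the statement that a multiplication admitting a potential connection is an $F$-manifold). Until that computation is supplied, the lemma is not proved.
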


\bigskip

The unfolding condition implies that the map 
$\TT_M \ni X \rightarrow - C_{X}\in \mathrm{End}(\OO(K))$ is 
an injection, and one can see that  relations (\ref{str}) are 
equivalent to the apparently stronger relations
\begin{equation}
C_{X\circ Y} = - C_XC_Y  ,\  C_e =-\id ,\ C_E 
=-\UU .
\end{equation}
In particular, $\circ$ and $E$ are independent of the choice of $\zeta .$
By means of the isomorphism (\ref{def-I}), the metric $g$ on $K$ induced by the $(TEP)$-structure 
(see Lemma \ref{t2.2}) induces a
multiplication invariant, non-degenerate bilinear form $g^{M}$ on $TM$.
As opposed to $\circ$ and $e$, $g^{M}$  is non-canonical (it depends on the choice of $\zeta$).
But its existence shows that all algebras $(T_tM,\circ_{t}, e_{t})$ are Gorenstein rings.

\begin{definition}\label{t3.4}
An $F$-manifold $(M,\circ ,e, E)$   is {\sf semisimple}
if the multiplication $\circ$  is everywhere semisimple (i.e.  any tangent space $T_{p}M$ is a direct sum of one dimensional algebras).  
An $F$-manifold is {\sf generically semisimple}
(in \cite{He02} it was called {\sf massive}) if it is semisimple
at generic points.
\end{definition}

By the local decomposition of $F$-manifolds \cite[Theorem
2.11]{He02}, a semisimple $F$-manifold is locally a product of
1-dimensional $F$-manifolds and the eigenvalues of $\UU=E\circ$
form locally Dubrovin's canonical coordinates.  
(Be aware that semisimplicity here does not require that these
eigenvalues are everywhere pairwise different.)

Special $F$-manifolds which are nowhere semisimple were studied in
\cite{DH15a}. The $(TEP)$-structures above them satisfy
themselves an unfolding condition, which was studied in
\cite{HM04}. It seems to be difficult to determine how many
$(TEP)(w)$-structures exist above other nowhere semisimple $F$-manifolds. 

But for irreducible germs of generically semisimple
$F$-manifolds, we have the following conjecture,
which we hope to address in the forthcoming paper
\cite{DH16}.

\begin{conjecture}\label{t3.5}
Let $((M,0),\circ,e,E)$ be an irreducible germ of a generically
semisimple $F$-manifold such that $(T_0M,\circ)$
is a Gorenstein ring. Then above it, there exists up to
isomorphism a unique $(TEP)(w)$-structure. Its only automorphisms
are $\pm\id$.
\end{conjecture}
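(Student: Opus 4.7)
The plan is to separate the problem into existence and uniqueness over the germ $(M,0)$, and in each case to first work on the open semisimple locus $M^{\circ}\subset M$ (non-empty by the generic semisimplicity hypothesis) and then extend across the caustic $\Sigma = M\setminus M^{\circ}$. The Gorenstein assumption on $(T_0 M,\circ)$ should enter at the very start: it guarantees a non-degenerate invariant bilinear form on $T_0 M$, which will furnish the central-fiber value of the holomorphic metric $g$ associated to any $(TEP)(w)$-structure by Lemma \ref{t2.2}, and thus is precisely the fiber datum one needs to propagate the pairing $P$ to $\{0\}\times\{0\}$.

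For existence, on $M^{\circ}$ the local decomposition theorem \cite[Theorem 2.11]{He02} writes the germ of $M$ at any semisimple point as a product of one-dimensional $F$-manifolds, with Dubrovin's canonical coordinates $u_i$ giving the eigenvalues of $\UU=E\circ$. The construction of Example \ref{t2.4}(iii) then yields a canonical semisimple $(TEP)(w)$-structure with trivial Stokes structure on the universal cover of $M^{\circ}$. One must then equip this with the correct (a priori non-trivial) Stokes data, determined by the germ geometry near $\Sigma$, and extend the resulting bundle holomorphically across $\Sigma$. The Gorenstein condition at $0$ pins down $g|_0$ and is exactly the compatibility that allows $P$ to extend non-degenerately across the central fiber.

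For uniqueness, let $(H^{(i)}\to\C\times M,\nnn^{(i)},P^{(i)})$ ($i=1,2$) be two $(TEP)(w)$-structures with the given $F$-manifold as underlying structure. On $M^{\circ}$ the endomorphisms $\UU^{(1)}$ and $\UU^{(2)}$ share eigenvalues (the Dubrovin coordinates), so Theorem \ref{t2.5}(a) supplies $2^n$ formal isomorphisms near every point of $M^{\circ}$, and part (b) promotes any one that matches the Stokes data to a holomorphic isomorphism. The irreducibility of $(M,0)$ should force a unique globally compatible choice from the $\{\pm\id\}^n$ ambiguity, since independent sign changes in different one-dimensional local factors correspond to a nontrivial product decomposition of the germ. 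The resulting isomorphism on $M^{\circ}$ must then be extended to a neighborhood of $0$, using the Gorenstein-normalized pairing as an anchor. Applying the same machinery to the identity morphism then yields that the automorphism group drops from $\{\pm\id\}^n$ on $M^{\circ}$ to $\{\pm\id\}$ on $(M,0)$.

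The main obstacle is the extension across $\Sigma$. On $M^{\circ}$ the formal and holomorphic theory is well developed (\cite{HS07}, Theorem \ref{t2.5}), but producing a holomorphic bundle $H\to\C\times M$ with the prescribed connection, pairing and Stokes data at points of $\Sigma\times\{0\}$---where the eigenvalues of $\UU$ collide and the Hukuhara--Turrittin normal form degenerates---requires a genuine analytic extension result that is not directly available in the cited literature. Likewise, the reduction of the automorphism group to $\pm\id$ hinges on a Hartogs-type argument that uses the irreducibility of $(M,0)$ to preclude isolated sign changes along components of $\Sigma$. Both steps are the hard part of the conjecture, and one expects \cite{DH16} to address them via a careful analysis of Stokes data in deformations of irregular connections, in the spirit of \cite{Ma83a,Ma83b,Sa02}, adapted to the rigidity imposed by the underlying $F$-manifold geometry.
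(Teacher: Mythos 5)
The statement you are trying to prove is stated in the paper as a \emph{conjecture}, not a theorem: the paper offers no proof, and Remarks \ref{t3.5b}(i) explicitly record that existence is unknown outside the cases where the $F$-manifold carries a Frobenius structure, that uniqueness is ``unknown in almost all cases,'' and that only the final assertion (automorphism group $\{\pm\id\}$) is ``true and elementary.'' Your proposal does not close this gap: you yourself defer the two essential steps --- the holomorphic extension of the bundle, connection, pairing and Stokes data across the caustic $\Sigma$, and the global rigidity forcing a unique compatible choice --- to the very future work \cite{DH16} that the authors announce. What you have written is a plausible research plan, not a proof.

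Beyond that, two specific points in the sketch would fail as stated. First, your uniqueness argument invokes Theorem \ref{t2.5}(b) to promote a formal isomorphism to a holomorphic one, but part (b) requires that the isomorphism \emph{match the Stokes structures} of the two $(TEP)$-structures --- and showing that two $(TEP)(w)$-structures over the same irreducible germ of a generically semisimple $F$-manifold necessarily have isomorphic Stokes data is precisely the open content of the uniqueness assertion; invoking \ref{t2.5}(b) there is circular. The same issue afflicts your existence step, where ``equip this with the correct Stokes data, determined by the germ geometry near $\Sigma$'' names the problem rather than solving it. Second, the role you assign to the Gorenstein hypothesis is inverted: by the discussion after Lemma \ref{t3.3}, any $(TEP)$-structure with the unfolding condition induces a multiplication-invariant non-degenerate form $g^M$, so Gorenstein is a \emph{necessary} condition for existence; it does not by itself ``pin down'' $g|_0$ (invariant forms on a Gorenstein algebra form a torsor, not a point) nor supply the compatibility needed to extend $P$ across the central fiber. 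If you want to contribute something verifiable here, the elementary part --- that any automorphism is $\pm\id$, using that over the semisimple locus the automorphism group is $\{\pm\id\}^n$ fiberwise (Example \ref{t2.4}(iii)) and that irreducibility of the germ plus monodromy around the caustic permutes the local factors transitively, forcing all signs to agree --- is the one claim you could actually write out in full.
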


\begin{remarks}\label{t3.5b}
(i) In various cases the existence is known: whenever the $F$-manifold
extends to a Frobenius manifold
(see Subsection \ref{s3.2}),  there is a Saito structure
and thus a $(TEP)(w)$-structure above the $F$-manifold.
But there are other cases (e.g. the 3-dimensional $F$-manifolds
from \cite[Theorem 5.30]{He02}), where existence
is unknown. Uniqueness is unknown in almost all cases.
That the only automorphisms are $\pm\id$ is true and elementary.

\medskip
(ii) In Conjecture \ref{t3.5} the irreducibility assumption 
of the germ is essential. 
Let $((M,0), \circ ,e, E)$ be a reducible germ of $F$-manifolds. 
It decomposes into irreducible germs (see \cite[Theorem 2.11]{He02}),
and this decomposition lifts 
to a formal decomposition of any $(TEP)$-structure above 
$((M,0), \circ ,e, E)$. 
(The forthcoming paper \cite{DH16}, which will address Conjecture \ref{t3.5},
will also contain this result). This is not, in general, a holomorphic decomposition.
Suppose now that $((M,0), \circ , e, E)$ is generically  semisimple (and reducible).
If Conjecture \ref{t3.5} is true 
then the $(TEP)(w)$-structures above
$((M,0), \circ ,e, E)$
differ (only) 
in the way in which the unique $(TEP)(w)$-structures above the irreducible
factors of $((M,0), \circ ,e, E)$ are put together (via formal isomorphisms).
This should be controlled by Stokes data.

\medskip
(iii) If Conjecture \ref{t3.5} is true, then for any  germ of a generically semisimple $F$-manifold,
the various conditions $A(-1)$, $A(0)$ and $A(\infty )$
from Definition \ref{var-equiv} are equivalent -  because of (ii).
If the germ is also irreducible, then all these conditions are equivalent to 
$A(hol )$ as well.
\end{remarks}

\subsection{Frobenius manifolds}\label{s3.2} 
We  consider only Frobenius manifolds with Euler fields.

\begin{definition}\label{t3.6}
(Dubrovin \cite{Du92}, see also \cite{Sa02} or \cite{He02})
A {\sf Frobenius manifold} 
is a tuple $(M,\circ,e,E,g,d)$ where $(M,\circ,e,E)$ is
an $F$-manifold, $g$ is a holomorphic, symmetric, non-degenerate
$\OO_M$-bilinear form with flat Levi-Civita connection
$\nnn^g$, $d\in\C$, and the following conditions hold:

\medskip   

(a)  the metric $g$ is multiplication invariant, i.e. $g(X\circ Y, Z) = g(X, Y\circ Z)$, for any $X, Y, Z\in {\mathcal T}_{M}.$

\medskip 

(b)   $\nnn^g e=0$ and  $L_E(g)=(2-d)\cdot g$.
\end{definition}

As proved in \cite{He02}, the $F$-manifold condition \eqref{3.1}
and $\nnn^g e=0$ imply the potentiality condition $\nnn^g(C)=0$.
The tangent bundle $TM$ of a Frobenius manifold underlies a Saito structure, with Higgs field $C_{X}(Y) = - X\circ Y$,
endomorphisms $\UU(X)  = X\circ E$, $\VV (X) = \nabla^{g}_{X}(E) -\frac{2-d}{2} X$, connection $\nabla^{r} = \nabla^{g}$  
and metric $g$.
This Saito structure satisfies the unfolding condition (take $\zeta = e$).
Conversely, any Saito structure with the unfolding condition and 
a certain global section defines a Frobenius manifold. 
This is expressed in the next theorem,  which  was  implicitly used in the
construction 
by K. Saito and M. Saito (1983) of Frobenius manifold structures
(under the name {\sf flat structures})
on the base space of a universal unfolding of an isolated
hypersurface singularity. But it was formalized only much later, first in \cite{Sa02}.

\begin{theorem}\label{t3.7}(\cite[Chapter VII, 3.6]{Sa02} , \cite[Theorem 5.12]{He03})
Let $(K\rightarrow M,C,\UU,g,\nnn^r,\VV)$ be a Saito structure with the unfolding condition. Let $\zeta\in\OO(K)$ be a global section such that
the map (\ref{def-I}) is an isomorphism,  $\nnn^r\zeta=0$ and
$\VV (\zeta )=\frac{d}{2}\zeta$ for some $d\in \C$.
By Lemma \ref{t3.3} $(M,\circ,e,E)$ is an $F$-manifold. Shift the Saito structure with $I^{-1}$
(the inverse of the isomorphism  $ TM \ni X \rightarrow I (X) = - C_{X}(\zeta )\in K$)
to the bundle $TM$. Then $g$ becomes $g^M$, and $\nnn^r$ becomes
the Levi-Civita connection of $g^M$. The tuple $(M,\circ,e,E,g^{M},d)$
is a Frobenius manifold.
\end{theorem}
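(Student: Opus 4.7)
The $F$-manifold structure on $(M,\circ,e,E)$ is already supplied by Lemma \ref{t3.3}, so the task is to verify the Frobenius axioms once the Saito data are transported from $K$ to $TM$ via $I^{-1}$. Concretely, I would set $g^M(X,Y):=g(I(X),I(Y))$ and $\nnn^g_XY:=I^{-1}(\nnn^r_XI(Y))$, and show successively that $g^M$ is multiplication invariant, that $\nnn^g$ is the Levi-Civita connection of $g^M$, that $\nnn^ge=0$, and that $L_E(g^M)=(2-d)g^M$. Two identities will be used repeatedly: $I(X)=-C_X\zeta$ and, from Lemma \ref{t3.3}, $I(e)=\zeta$ and $I(E)=\UU\zeta$.

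Multiplication invariance is immediate: because $I(X\circ Y)=-C_XI(Y)$ (apply $C_Y$-symmetry to $C_{X\circ Y}\zeta=-C_XC_Y\zeta$), the symmetry \eqref{2.9} gives
\begin{equation*}
g^M(X\circ Y,Z)=-g(C_XI(Y),I(Z))=-g(I(Y),C_XI(Z))=g^M(Y,X\circ Z).
\end{equation*}
That $\nnn^g$ is metric is immediate from $\nnn^r(g)=0$, and flatness from $(\nnn^r)^2=0$. For torsion-freeness I would apply the potentiality identity $\nnn^r_XC_Y-\nnn^r_YC_X-C_{[X,Y]}=0$ to $\zeta$ and use $\nnn^r\zeta=0$; this yields $\nnn^r_XI(Y)-\nnn^r_YI(X)=I([X,Y])$, i.e.\ $\nnn^g_XY-\nnn^g_YX=[X,Y]$. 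The condition $\nnn^ge=0$ then follows from $I(e)=\zeta$ and $\nnn^r\zeta=0$.

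The main point is the computation of $L_E(g^M)$. Applying \eqref{2.20b} to $\zeta$ and using $\nnn^r\zeta=0$ and $\VV(\zeta)=\tfrac{d}{2}\zeta$, I get
\begin{equation*}
\nnn^r_XI(E)=\nnn^r_X(\UU\zeta)=\tfrac{2-d}{2}I(X)+\VV(I(X)),
\end{equation*}
hence $\nnn^g_XE=\tfrac{2-d}{2}X+\tilde{\VV}(X)$ with $\tilde{\VV}:=I^{-1}\VV I$. Since $\nnn^g$ is torsion-free and metric,
\begin{equation*}
L_E(g^M)(X,Y)=g^M(\nnn^g_XE,Y)+g^M(X,\nnn^g_YE)=(2-d)g^M(X,Y)+g^M(\tilde{\VV}X,Y)+g^M(X,\tilde{\VV}Y),
\end{equation*}
and the last two terms cancel thanks to the $g$-skew-symmetry \eqref{2.22} of $\VV$, which transports to $g^M$-skew-symmetry of $\tilde{\VV}$. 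This gives $L_E(g^M)=(2-d)g^M$ and completes the proof.

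The only mildly delicate step is the Euler-weight identity; everything else reduces to directly translating the Saito axioms through the isomorphism $I$. The calculation for $\nnn^g_XE$ is where the three hypotheses on $\zeta$ (primitivity, flatness, $\VV$-homogeneity) all enter simultaneously, and the role of \eqref{2.20b} is essential there — this is the step I expect to require the most care.
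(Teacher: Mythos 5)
Your proof is correct and complete: every Frobenius axiom (multiplication invariance of $g^M$, metricity, torsion-freeness and flatness of the transported connection, $\nabla^g e=0$, and the Euler homogeneity $L_E(g^M)=(2-d)g^M$ via \eqref{2.20b} and the $g$-skew-symmetry \eqref{2.22} of $\VV$) is verified by pushing the Saito axioms through $I$, with the three hypotheses on $\zeta$ entering exactly where you say. The paper itself gives no proof of this theorem, citing \cite{Sa02} and \cite{He03}; your argument is the standard one found in those references.
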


\subsection{$CV$ structures on the tangent bundle}\label{s3.3}

Theorem 4.5 of \cite{He03} treated $CV$-structures $(TM,C,
\mathcal U, g, k, h, D, Q)$ on tangent bundles, with the unfolding condition. But two
facts were not included in its statement: namely, that $L_{e}(g)
=0$ implies  $D_{e} = L_{e}$ (where $e$ is the unit field of the induced $F$-manifold structure on $M$) and that the operator $\QQ$ is
uniquely determined by the other data of the CV-structure. These
facts lead to a stronger result. Theorem \ref{t3.8} below formulates
this stronger result. The first step in its proof is the next
lemma, which describes the endomorphism $\QQ$ 
of a CV-structure  (not
necessarily on a tangent bundle) with the unfolding condition.

\begin{lemma}\label{help} Let $(K \to M, C, \mathcal U, g, k, h, D, \QQ )$ be a CV-structure with the unfolding condition and $I: TM \rightarrow K$ the isomorphism
(\ref{def-I}) defined by a primitive section $\zeta .$ Then
\begin{equation}\label{expr-q-1}
\QQ= \left( D_{I^{-1}} ({\mathcal U})(\zeta )\right)^{skew}.
\end{equation}
Here $D_{I^{-1}}({\mathcal U})(\zeta )\in\Gamma ( \mathrm{End}(K))$ is
given by
$$
D_{I^{-1}} ({\mathcal U})(\zeta )(s):= D_{I^{-1}(s)} ({\mathcal U})(\zeta ),\quad s\in \OO  (K),
$$
and the superscript "$skew$" means its skew-symmetric part with
respect to $g$.
\end{lemma}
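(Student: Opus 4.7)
The strategy is to apply the CV-axiom \eqref{2.30} to the primitive section $\zeta$ and then identify $\QQ$ as the $g$-skew-symmetric part of the resulting endomorphism of $K$. The key algebraic inputs are the $g$-skew-symmetry of $\QQ$ from \eqref{2.29}, together with the $g$-symmetry and commutativity of the Higgs field encoded in \eqref{2.9} and \eqref{2.7}.

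First I would contract \eqref{2.30} with $X = I^{-1}(s)$ and evaluate at $\zeta$. Using the defining relation $C_{I^{-1}(s)}(\zeta) = -s$, the identity $D_X(\UU) - [C_X, \QQ] + C_X = 0$ rearranges, after unfolding the bracket and substituting $C_X\zeta = -s$, to
\begin{equation*}
D_{I^{-1}(s)}(\UU)(\zeta) \;=\; C_{I^{-1}(s)}(\QQ\zeta) \;+\; \QQ(s) \;+\; s.
\end{equation*}
Viewing the left-hand side as an endomorphism $A \in \Gamma(\mathrm{End}(K))$ via $s \mapsto D_{I^{-1}(s)}(\UU)(\zeta)$, this reads $A = A_1 + \QQ + \id$, where $A_1(s) := C_{I^{-1}(s)}(\QQ\zeta)$.

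The only nontrivial step is to check that $A_1$ is $g$-symmetric. Setting $s_i = -C_{X_i}(\zeta)$ with $X_i = I^{-1}(s_i)$, moving $C_{X_1}$ across $g$ by \eqref{2.9} and recognizing $C_{X_1}(s_2) = -C_{X_1}C_{X_2}(\zeta)$, one reaches
\begin{equation*}
g(A_1(s_1), s_2) \;=\; -\,g\!\left(\QQ\zeta,\; C_{X_1}C_{X_2}(\zeta)\right),
\end{equation*}
and the commutativity \eqref{2.7} of the Higgs field makes the right-hand side symmetric in $s_1 \leftrightarrow s_2$. Since $\id$ is trivially $g$-symmetric and $\QQ$ is $g$-skew-symmetric by \eqref{2.29}, the splitting $A = (A_1 + \id) + \QQ$ is precisely the decomposition of $A$ into its $g$-symmetric and $g$-skew-symmetric parts, which yields $A^{skew} = \QQ$, i.e.\ \eqref{expr-q-1}.

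No real obstacle arises: the calculation is two applications of the $g$-symmetry of $C$ together with the commutativity $[C_X,C_Y]=0$. The only point requiring care is the sign bookkeeping forced by the convention $I(X)=-C_X(\zeta)$, which is what lets the ``$+s$'' term from rearranging \eqref{2.30} be absorbed harmlessly into the $g$-symmetric summand.
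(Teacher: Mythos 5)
Your proof is correct and follows essentially the same route as the paper: contract \eqref{2.30} with $X=I^{-1}(s)$, evaluate on $\zeta$, and identify $\QQ$ as the $g$-skew part of $s\mapsto D_{I^{-1}(s)}(\UU)(\zeta)$. The only cosmetic difference is that you verify directly that $s\mapsto C_{I^{-1}(s)}(\QQ\zeta)$ is $g$-symmetric via \eqref{2.9} and $[C_X,C_Y]=0$ and then invoke uniqueness of the symmetric/skew decomposition, whereas the paper feeds the expression for $\QQ$ back into the skew-symmetry relation \eqref{2.29} to solve for the symmetric summand; these are the same computation.
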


{\bf Proof:} Let $X_{0}\in {\mathcal T}_{M}$, such that $C_{X_{0}} (\zeta )= \QQ (\zeta )$. The relation
$$
D_{X}({\mathcal U})(\zeta ) - [C_{X}, \QQ](\zeta ) +C_{X}(\zeta )=0
$$
is equivalent to
$$
D_{X}({\mathcal U})(\zeta)- C_{X_{0}} C_{X}(\zeta ) + \QQ C_{X}
(\zeta )+ C_{X} (\zeta ) =0.
$$
Letting $s:= - C_{X} (\zeta )$, we obtain
\begin{equation}\label{q}
\QQ(s) = C_{X_{0}} (s) - s + D_{I^{-1}(s)}({\mathcal U}) (\zeta ),\
\forall s\in \Gamma (K).
\end{equation}
On the other hand, $\QQ$ is $g$-skew-symmetric. Replacing in the
relation
$$
g(\QQ(s_{1}), s_{2}) + g(s_{1},\QQ(s_{2})) =0
$$
the expression (\ref{q}) for $\QQ$, we obtain that $C_{X_{0}} -
\mathrm{id} = - (D_{I^{-1}}(\mathcal U )(\zeta ))^{sym}$. This
fact, together with (\ref{q}), implies (\ref{expr-q-1}).\hfill$\Box$

\bigskip

The next theorem is a stronger version 
 of \cite[Theorem 4.5]{He03}. As before, the superscript "$skew$" in 
 (\ref{expr-q}) below means the $g$-skew-symmetric part.

\begin{theorem}\label{t3.8}
Let $(TM,C,\UU,g,\kappa , h,D,\QQ)$ be a $CV$-structure on a tangent bundle $TM$, with the unfolding condition.

\medskip
(a)  $(M,\circ,e,E)$ with $C_{X} = - X\circ$,  $C_e = -\id$ and
$C_{E} =  - \UU$ is an $F$-manifold.

\medskip
(b) The operator $\QQ$ is given by
\begin{equation}\label{expr-q}
\QQ=(D_{E} - L_{E})^{skew}
\end{equation}
and satisfies
\begin{eqnarray}\label{3.3}
L_{\oooo{e}}(\QQ)=D_{\oooo{e}}(\QQ)=D_e(\QQ)=0.
\end{eqnarray}

\medskip
(c) We have the equivalences
\begin{eqnarray}\label{3.4}
&& D_e- L_e=0\quad\textup{on }C^\infty \textup{ tensors}\\
&\iff& D_ee=0\nonumber\\
&\iff& L_e(h)=0\iff  L_{\oooo{e}}(h)=0\nonumber\\
&\iff& L_e(\kappa)=0\iff L_{\oooo{e}}(\kappa)=0\nonumber\\
&\iff& L_e(g)=0\nonumber
\end{eqnarray}
and if these statements hold, then also
\begin{eqnarray}\label{3.5}
L_e(\QQ)=0.
\end{eqnarray}

\medskip (d) We fix $d\in \mathbb{R}.$ If
\begin{eqnarray}\label{3.7}
L_E(g)&=&(2-d)g
\end{eqnarray}
then
\begin{eqnarray}\label{3.8}
&\QQ =D_{E} - L_{E}- \frac{2-d}{2} \mathrm{id}\\
&L_{E-\bar{E}}(h)=0,\label{3.9}\\
&L_E(\QQ )=L_{\bar{E}}(\QQ)=D_E\QQ=D_{\bar{E}}\QQ
=[\UU,\kappa\UU\kappa].\label{3.10}
\end{eqnarray}
In particular,  the tensors $h$ and $\QQ$ are invariant under the
flow of $E-\bar{E}$.
\end{theorem}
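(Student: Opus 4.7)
The plan is to take parts (a), (b), (c), (d) in order, using Lemma \ref{help} as the engine for part (b), whose formula then drives the remaining parts. Part (a) is immediate from Lemma \ref{t3.3} with $K = TM$ and primitive section $\zeta = e$, for which $I = -C_\bullet(e)$ is the identity on $TM$.

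For part (b), Lemma \ref{help} with $\zeta = e$ and $I = \id$ yields $\QQ = A^{skew}$, where $A(X) := D_X(\UU)(e) = D_X E - \UU(D_X e)$ (using $\UU(e) = E$). To recognize this as $(D_E - L_E)^{skew}$, I would first extract the torsion of the Chern connection $D^{(1,0)}$ from the potentiality condition \eqref{2.27}: substituting $C_X = -X\circ$ into $D^{(1,0)}(C) = 0$ and evaluating at a third slot equal to $e$ yields $T(V, W) = W \circ D_V e - V \circ D_W e$. Then
\begin{eqnarray*}
(D_E - L_E)(X) = D_X E + T(E, X) = A(X) + X \circ D_E e,
\end{eqnarray*}
and $X \mapsto X \circ D_E e$ is $g$-symmetric by the Frobenius property of $g$, so $(D_E - L_E)^{skew} = A^{skew} = \QQ$. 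The identity $D_e \QQ = 0$ is \eqref{2.31} at $X = e$ together with $C_e = -\id$; the identities $D_{\bar e}\QQ = L_e \QQ = L_{\bar e}\QQ = 0$ will follow from the $\kappa$-conjugate of \eqref{2.31} and from the commutator identity $L_v A - D_v A = [L_v - D_v, A]$ on endomorphisms, which vanishes on $A = \QQ$ once the corresponding $D_v \QQ = 0$ is known.

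For part (c), the same torsion formula gives $(D_e - L_e)(X) = X \circ D_e e$ for vector fields $X$; extending by the Leibniz rule, $D_e - L_e = 0$ on $C^\infty$ tensors iff $D_e e = 0$. The equivalences in \eqref{3.4} then unfold directly: using $D(g) = 0$ (which holds since $D(h) = D(\kappa) = 0$ and $g = h(\cdot, \kappa\cdot)$) and Frobenius symmetry, $L_e(g)(X, Y) = 2\, g(D_e e, X \circ Y)$, so $L_e(g) = 0 \Leftrightarrow D_e e = 0$ by non-degeneracy; the statements about $L_e(\kappa)$, $L_{\bar e}(\kappa)$, $L_e(h)$, $L_{\bar e}(h)$ reduce analogously through $\kappa^2 = \id$ and $h = g(\cdot, \kappa\cdot)$. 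Under the equivalent conditions, $L_e \QQ = D_e \QQ = 0$ from (b).

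For part (d), the hypothesis $L_E(g) = (2-d)g$ together with $D(g) = 0$ yields
\begin{eqnarray*}
g((D_E - L_E) X, Y) + g(X, (D_E - L_E) Y) = L_E(g)(X, Y) = (2-d)\, g(X, Y),
\end{eqnarray*}
so $(D_E - L_E)^{sym} = \frac{2-d}{2}\, \id$, and combining with (b) gives $\QQ = D_E - L_E - \frac{2-d}{2}\, \id$. The identity $L_{E - \bar E}(h) = 0$ follows from $L_E(g) = (2-d)g$ together with its real-structure conjugate and $h = g(\cdot, \kappa\cdot)$. For the final chain, $D_E \QQ = [\UU, \kappa\UU\kappa]$ is \eqref{2.31} at $X = E$ with $C_E = -\UU$; the equality $L_E \QQ = D_E \QQ$ follows from the commutator identity applied to $A = \QQ$, since $L_E - D_E = -\QQ - \frac{2-d}{2}\, \id$ commutes with $\QQ$; the two $\bar E$-assertions are the $\kappa$-conjugates. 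The hardest bookkeeping will be tracking these $(0,1)$-analogues of the CV-axioms \eqref{2.30} and \eqref{2.31}: one must apply $\kappa$ on both sides, use $D\kappa = 0$ and $\kappa\QQ\kappa = -\QQ$ (which follows from \eqref{2.29} and \eqref{2.32}) to get the conjugate forms, then read off the $\bar e$- and $\bar E$-assertions in parallel to the $(1,0)$-computations.
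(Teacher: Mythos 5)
Your proposal is correct and follows essentially the same route as the paper: part (b) rests on Lemma \ref{help} with $\zeta=e$ plus the potentiality identity $(D(C))(X,E)(e)=0$ to convert $D_X(\UU)(e)$ into $(D_E-L_E)(X)$ modulo a $g$-symmetric term, and parts (c), (d) rest on $D(g)=0$ exactly as in the paper (your direct computation $D_e-L_e=(D_ee)\circ$ from the torsion is just a repackaging of the paper's step $[C_X,D_e-L_e]=0\Rightarrow D_e-L_e=\xi\circ$; you also supply sketches for the identities the paper delegates to \cite{He03}). One caveat: in your treatment of \eqref{3.3} you list $L_e\QQ=0$ among the unconditional identities and assert that $[L_v-D_v,\QQ]$ vanishes ``once $D_v\QQ=0$ is known.'' That inference is not valid: for $v=e$ one has $L_e-D_e=-(D_ee)\circ$ on vector fields, which need not commute with $\QQ$ unless $D_ee=0$, and indeed $L_e(\QQ)=0$ is precisely \eqref{3.5}, valid only under the equivalent conditions of part (c); for $v=\bar e$ the correct reason is that $L_{\bar e}-D_{\bar e}$ vanishes identically on $(1,0)$-fields, not that $D_{\bar e}\QQ=0$. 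Since \eqref{3.3} does not assert $L_e(\QQ)=0$, this slip does not affect the proof of the stated theorem, but the justification of the Lie-derivative identities should be repaired accordingly.
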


{\bf Proof: } Part  (a) is proved  in \cite{He03}. 
It follows from Lemma  \ref{t3.3}
combined with the fact that any CV-structure arises from a $(TERP)$-structure  (in particular, from a $(TEP)$-structure).
To prove
(\ref{expr-q}), we consider
in Lemma \ref{help}  $\zeta =e$,
hence $I(X) =  X$ for any $X\in TM.$ Using that ${\mathcal U}(X) =
E\circ X$, we obtain
\begin{align}
2g(\QQ (X), Y) \nonumber&=  g (D_{X}({\mathcal U})(e), Y) - g (D_{Y}({\mathcal U})(e), X)\\
\nonumber&= g (D_{X}(E) - E\circ D_{X}(e) , Y)\\
\label{rel1}& -g (D_{Y}(E) - E\circ D_{Y}(e) , X).
\end{align}
On the other hand, from $D(C) =0$,
$$
0 = (D(C))(X, E)(e) = D_{X}(C_{E})(e) - D_{E}(C_{X})(e) - C_{[X,
E]}(e).
$$
This  gives, using $C_{X}(Y) = - X\circ Y$ and
$$
D_{X}(C_{Y}) (Z) = D_{X} ( C_{Y}(Z) ) - C_{Y} D_{X}(Z)
$$
the following relation:
\begin{equation}\label{rel2}
D_{X}(E) - D_{X}(e) \circ E = (D_{E}- L_{E})(X) - D_{E}(e) \circ
X.
\end{equation}
Combining (\ref{rel1}) with (\ref{rel2}) we obtain
$$
2g (\QQ (X), Y) = g ( (D_{E} - L_{E})(X), Y) - g(X, (D_{E}-L_{E})(Y))
$$ which is (\ref{expr-q}). The equalities (\ref{3.3}) were proved
in \cite{He03}. Part (b) follows.

In part (c), only the implication $L_e(g)\Longrightarrow
D_e- L_e=0$ is not contained in \cite[Theorem 4.5]{He02}. In
order to prove it, we make the following computation: for any
$X\in\TT_M$,
\begin{eqnarray}
[C_X,D_e- L_e]&=&-[X\circ,D_e- L_e] =D_e(X\circ)-L_e(X\circ)
\nonumber\\
&=& (D_e(X\circ)-D_X(e\circ))- L_e(X)\circ
\nonumber\\
&=&[e,X]\circ -[e,X]\circ=0,\label{3.11}
\end{eqnarray}
where in the second line we used 
$D_{X} (e\circ )=0$  and $L_{e}(\circ )=0$  and in last
line we used $(DC)(e,X)=0$. Therefore, there is a 
vector field $\xi\in\TT^{1,0}_M$, such that $D_e- L_e=\xi\circ$.
Now $L_e(g)=0$ and \cite[(4.27)]{He03}, which is
\begin{eqnarray}\label{3.12}
L_X(g)(Y,Z)=g((D_X- L_X)(Y),Z)+g(Y,(D_X- L_X)(Z)),
\end{eqnarray}
show together
\begin{eqnarray}
0&=& L_e(g)(Y,Z) \nonumber\\
&=& g((D_e- L_e)(Y,Z)+g(Y,(D_e- L_e)(Z))\nonumber\\
&=& g(\xi\circ Y,Z)+g(Y,\xi\circ Z)=2g(\xi\circ Y,Z)\nonumber .
\end{eqnarray}
We obtain that $\xi=0$ and $D_e- L_e=0$.

(d) Relation \eqref{3.8} follows from \eqref{expr-q}, $L_E(g)=(2-d)g$ and
\eqref{3.12} for $X=E$.
The rest of part (d) was proved in  \cite[Theorem 4.5]{He03} (in fact
the part behind the proof of (4.22) in \cite{He03} is not needed
anymore, due to (\ref{3.8})). \hfill$\Box$

\bigskip

In view of the above theorem, the notion of a 
$CDV$-structure from
\cite[Definition 4.6]{He03} can be defined in the following simplified way.

\begin{definition}\label{t3.9}
A {\sf CDV-structure} on a manifold $M$ is a $CV$-structure
$(TM\to M,C,\UU,g,\kappa,h,D,\QQ)$, together with a
Frobenius manifold structure $(M,\circ,e,E,g)$, such
that $C_X=-X\circ$, $  \UU (X)=E\circ X$, for any $X\in TM$
and the constant $d\in \mathbb{C}$ from $L_{E}(g) = (2-d)g$ is real.
 It is a {\sf  $CDV\oplus$-structure} if $h$ is positive definite.
\end{definition}

\begin{remarks}\label{t3.10} (i) In \cite[Definition 4.6]{He03}, it was required, in addition 
to the conditions from Definition \ref{t3.9}, that
the operator $\QQ$ of a CDV-structure is given by (\ref{3.8}) and the 
equivalent statements in \eqref{3.4} hold. Now these requirements
follow from Theorem \ref{t3.8}, part (d) respectively, part (c) 
(because  
$L_E(g)=(2-d)g$, respectively $L_e(g)=0$  on a Frobenius manifold).

\medskip

(ii) With the stronger version of \cite[Theorem 4.5]{He03}
and the new definition of a $CDV$-structure, Theorem
5.15 in \cite{He03} becomes trivial.

\medskip
(iii) Lin analyzed the definition of a $CDV$-structure
and reformulated it in \cite[Theorem 2.1]{Li11} as 
a Frobenius manifold $(M,\circ,e,E,g)$ with a real structure
$\kappa$ such that the following small system of additional
conditions is satisfied: $\QQ$ defined 
by  (\ref{3.8})   is self-adjoint with respect to $h$, and, moreover,
\eqref{2.27} and \eqref{2.28} hold.

\medskip

(iv) A $CDV$-structure on $M$ gives two $(TEP)(w)$-structures above $M$,
one from the Frobenius manifold (more precisely, from the associated Saito structure on $TM$), the other from the $CV$-structure. One can again use the 
various alternatives $A(-1),A(0),...,A(\infty)$ and $A(hol)$ from Definition 
\ref{var-equiv} to formulate a priori stronger compatibilities between the
Frobenius manifold and the $CV$-structure. A $CDV$-structure 
for which the Saito and CV-structure form a harmonic potential real Saito structure  
(or the $A(0)$-condition holds) is considered in \cite[Chapter 1]{Sa08} and is called
there a {\sf harmonic Frobenius manifold}. A $CDV$-structure where
$A(\infty )$  holds is called a {\sf  potential $CDV$-structure} in
\cite{LS12}.  A CDV-structure for which the $A(hol)$-condition holds is called a  
{\sf  strongly potential $CDV$-structure} in \cite{LS12}.
See Corollary \ref{t2.18} and 
Remark \ref{t3.5b} (iii) for the comparison of $A(-1),A(0),A(\infty)$ and 
$A(hol)$ in the semisimple and generically semisimple cases.

\medskip

(v) Lin studied in \cite[Theorem 2.4]{Li11} the case of a
$CDV$-structure with underlying semisimple $F$-manifold, where the
Frobenius manifold is arbitrary (i.e. its $(TEP)(w)$-structure has
an arbitrary Stokes structure), but the $CV$-structure
induces a $(TEP)(w)$-structure with trivial Stokes structure. Then
$\QQ=0$  and formulas for the
$CV$-structure are very explicit. The CV-structure  is obtained by pushing
an abstract $CV$-structure, which is a sum of rank 1 structures,
with a section $\zeta$ as in Theorem \ref{t3.7} to the tangent
bundle. These $CDV$-structures are also studied in \cite{LS12}.
\end{remarks}

\section{Canonical data from the endomorphism bundle}\label{s4}
\setcounter{equation}{0}

Let $(K\to M, C,\UU,g,\kappa,h,D,\QQ)$ be a
$CV$-structure with the unfolding condition.  
Owing to the unfolding condition,
the map
$$-C_\bullet: TM\to \End(K),\ X \rightarrow - C_{X}$$
is injective, so it restricts to an isomorphism 
\begin{eqnarray}\label{4.1}
-C_\bullet: TM\to F 
\end{eqnarray} from $TM$ to the bundle
$$
F:= \{ C_{X}\, |\, X\in TM\} \subset \mathrm{End}(K).
$$
On the bundle $F$ we define the data 
$(C^{F}, h^{F}, g^{F}, \QQ^{F})$, formed by a holomorphic Higgs field $C^{F}$, a hermitian form $h^{F}$ (which will be assumed to be non-degenerate), 
a holomorphic  complex bilinear form $g^{F}$ and a $C^{\infty}$-endomorphism $\QQ^{F}$.
The first three pieces of the data are defined as
$$
C^{F}_{X} (C_{Y}) := C_{X}C_{Y},\quad  h^{F}:= h^{end}\vert_{F},\ g^{F}:= g^{end}\vert_{F}.
$$
Here $h^{end}$ is the natural hermitian metric  on $\mathrm{End}(K)$ induced   
by $h$. It is obtained as follows:  $h$ induces a ($\mathbb{C}$-anti-linear) isomorphism $X \rightarrow h(\cdot , X)$ between $K$ and $K^{*}$ and 
a hermitian metric $h^{*}$ on $K^{*}$, defined by
$h^{*}( h(\cdot , X), h(\cdot , Y)) := h(Y, X)$, for any $X ,Y.$     
The metric $h^{end}= h^{*}\otimes h$ is the product metric on $\mathrm{End}(K) = K^{*}\otimes K$ and is given by
\begin{equation}\label{metric-end}
h^{end}(A, B) = \sum h (A(e_{i}), B(e^{*}_{i}))
\end{equation}
where $\{ e_{i}\}$ is a basis of $K$ and  $\{ e_{i}^{*} \}$ is the $h$-dual basis,
defined by $h(e_{i}, e_{j}^{*}) = \delta_{ij}$, for any $i, j$. 
The complex bilinear form  $g^{end}$  on $\mathrm{End}(K)$ is induced by $g$ and is defined
in the same way as $h^{end}$ (with $h$ replaced by $g$).

We assume, along the entire section, that $h^{F}$ is non-degenerate.
Then $\mathrm{End}(K) = F \oplus F^{\perp}$ is a direct sum decomposition,
where $F^{\perp}$ is the $h^{end}$-orthogonal complement of $F$. We denote by $\mathrm{pr} ^{F}$, $\mathrm{pr}^{F^{\perp}}$
the projections from $\mathrm{End}(K)$ onto its components $F$ and $F^{\perp}$. Then
$$
\QQ^F : F \rightarrow F,\quad \QQ^{F}:= \mathrm{pr}^{F} \circ \QQ^{end}\vert_{F}
$$
where 
$$
\QQ^{end} : \mathrm{End}(K) \to \mathrm{End}(K),\  \QQ^{end} (A):= [ \QQ , A],\ A\in \mathrm{End} (K).
$$

\medskip 

By Theorem \ref{t2.12} the $CV$-structure
$(K\to M, C,\UU,g,\kappa,h,D,\QQ)$ induces a 
$(TEP)$-structure with the unfolding condition.
Because of Lemma \ref{t3.3}, $M$ inherits a multiplication $\circ$ 
and a vector field $E$ which make it an $F$-manifold. Using the isomorphism  
\eqref{4.1}, the data $(C^{F}, h^{F}, g^{F}, \QQ^{F})$ induces 
a holomorphic  Higgs field $C^{M}$, a hermitian (non-degenerate) metric $h^{M}$, a holomorphic 
complex bilinear form $g^{M}$ and a $C^{\infty}$-endomorphism
$\QQ^{M}$ on $TM.$ We remark that $C^{M}_{X}(Y) = - X\circ Y$ for any $X$ and $Y$.
Since $\mathcal U = - C_{E}$, it is a section of $F$ and corresponds to $E$ by means of (\ref{4.1}).

\begin{definition}\label{t4.1}
Let $(K\to M,C,\UU,g,\kappa,h,D,\QQ)$ be a $CV$-structure with the
unfolding condition. The system  $(\circ , E,  h^{M}, g^{M}, \QQ^{M})$
is  called the {\sf canonical data} on $M$.  The hermitian form $h^M$ is
called the {\sf (hermitian) $CV$-metric} of the $F$-manifold $(M,
\circ ,e, E).$  
\end{definition}

The aim of this section is to study the properties of the canonical data.
To achieve this,  
in the following two lemmas  we study  the properties  of the system $(C^{F}, h^{F}, g^{F}, \QQ^{F})$.
The bundle $F$ will be considered as a subbundle of the hermitian vector bundle $(\mathrm{End}(K), h^{end}).$ 
Good references  for the theory of  holomorphic subbundles in holomorphic  hermitian vector bundles
(Chern connection of the subbundle,  its curvature, the second fundamental form)
are e.g. \cite[Chapter 1]{Ko87} and \cite[Chapter 0.5]{GH}. 

The Chern connection of $h^{end}$ is the natural extension of the Chern connection $D$  of $h$ to $\mathrm{End}(K)$ and will also be denoted by $D$.
The Chern connection $D^{F}$ of $h^{F}$ is
given by $D^{F}_{X}(C_{Y}) = \mathrm{pr}^{F}D_{X}(C_{Y})$ and the second fundamental form $A^{F}\in \Omega^{1} (\mathrm{Hom} (F, F^{\perp}))$ 
of the subbundle $F \subset \mathrm{End}(K)$ 
by
\begin{eqnarray}\label{added}
A^{F}_{X}(C_{Z})=  D_{X}(C_{Z}) - \mathrm{pr}^{F}
D_{X}(C_{Z}) . 
\end{eqnarray}

\begin{lemma}\label{t4.4} The following statements hold:

\medskip

(a) The second fundamental form $A^F$ is
symmetric in the following sense:
\begin{eqnarray}\label{4.6}
A^F_X(C_Y)=A^F_Y(C_X),\quad  X,Y\in \TT_M.
\end{eqnarray}

\medskip

(b) For any $X, Y, Z\in {\mathcal T}_{M}$,
\begin{equation}\label{changed}
(D^{F}(C^{F}))(X, Y)(C_{Z}) = \mathrm{pr}^{F} ( C_{Y}
A^{F}_{Z}(C_{X}) - C_{X} A^{F}_{Z}(C_{Y})).
\end{equation}
(c) The connection  $D^{F}$ preserves $g^{F}.$ 
\end{lemma}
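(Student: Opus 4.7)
The plan for (a) is to use the potentiality condition $D^{(1,0)}(C)=0$ directly. For holomorphic vector fields $X,Y$ this yields
\[
D_X(C_Y)-D_Y(C_X)=C_{[X,Y]}\in F.
\]
Projecting onto the $h^{end}$-orthogonal complement $F^\perp$, the left hand side becomes $A^F_X(C_Y)-A^F_Y(C_X)$, while the right hand side is already in $F$ and so projects to zero. This immediately gives (a).

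For (b), I will expand the covariant derivative
\[
(D^F(C^F))(X,Y)(C_Z)=D^F_X(C^F_Y)(C_Z)-D^F_Y(C^F_X)(C_Z)-C^F_{[X,Y]}(C_Z)
\]
using $D^F=\mathrm{pr}^F\circ D$ on $F$-valued sections and the Leibniz rule for $D$. The key structural input is that $F$ is closed under composition, since $C_YC_Z=-C_{Y\circ Z}\in F$ by Lemma~\ref{t3.3}; this guarantees that products $(\mathrm{pr}^FD_XC_Y)\cdot C_Z$ already lie in $F$. Decomposing $D_XC_Y=\mathrm{pr}^FD_XC_Y+A^F_X(C_Y)$ and similarly for $D_XC_Z$, the $F$-components coming from $\mathrm{pr}^FD_XC_Y$ and $\mathrm{pr}^FD_YC_X$ combine via potentiality (their difference is exactly $C_{[X,Y]}$) and cancel $C^F_{[X,Y]}(C_Z)$. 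The contributions $\mathrm{pr}^F(A^F_XC_Y\cdot C_Z)$ and $\mathrm{pr}^F(A^F_YC_X\cdot C_Z)$ cancel by the symmetry from (a). What remains is exactly the stated formula.

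For (c), the plan is to first establish that $D$ preserves $g$ on $K$: using $g(a,b)=h(a,\kappa b)$, the Chern compatibility of $D$ with $h$, and the identity $D_X(\kappa a)=\kappa(D_{\bar X}a)$ for $X\in\TT_M^{1,0}$ (a consequence of $D(\kappa)=0$), one computes directly that $Xg(a,b)=g(D_Xa,b)+g(a,D_Xb)$ on holomorphic sections. Consequently $D$ preserves $g^{end}$ on $\End(K)$, so for holomorphic $C_Y,C_Z$ and $X\in\TT_M^{1,0}$,
\[
Xg^F(C_Y,C_Z)=g^{end}(D_XC_Y,C_Z)+g^{end}(C_Y,D_XC_Z).
\]
The desired identity $Xg^F(C_Y,C_Z)=g^F(D^F_XC_Y,C_Z)+g^F(C_Y,D^F_XC_Z)$ therefore reduces to proving
\[
g^{end}(A^F_XC_Y,C_Z)+g^{end}(C_Y,A^F_XC_Z)=0.
\]
This is where the hard work lies. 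The main obstacle is that the $h^{end}$-orthogonal complement $F^\perp$ is generally not the $g^{end}$-orthogonal complement, so one cannot conclude the vanishing termwise. Two ingredients to attack it: (i) $A^F_X(C_Y)$ is $g$-symmetric, since $D$ preserves the $g$-adjoint and $C_Y^*=C_Y$; (ii) using this $g$-symmetry and trace-cyclicity, together with the subalgebra property $C_YC_Z\in F$, the displayed identity is equivalent to the trace identity
\[
\mathrm{tr}(D_X(C_YC_Z))=\mathrm{tr}(\mathrm{pr}^F(D_XC_Y)\cdot C_Z)+\mathrm{tr}(C_Y\cdot\mathrm{pr}^F(D_XC_Z)).
\]
The natural route to finish is to combine the symmetry $A^F_XC_Y=A^F_YC_X$ from (a) with the formula from (b) (permuted in $X,Y,Z$) to exhibit the cancellation; this is the step I expect to require the most delicate bookkeeping.
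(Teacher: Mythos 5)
Parts (a) and (b) of your proposal are essentially the paper's own argument. For (a) the paper likewise projects the potentiality identity $D_X(C_Y)-D_Y(C_X)-C_{[X,Y]}=0$ onto $F^{\perp}$; for (b) it performs the same Leibniz expansion of $D^F(C^F)$, uses that $F$ is closed under composition, cancels $C_{[X,Y]}C_Z$ against the projected potentiality identity composed with $C_Z$, and is left with $\mathrm{pr}^F(C_YA^F_X(C_Z))-\mathrm{pr}^F(C_XA^F_Y(C_Z))$, which becomes the stated formula after one application of (a). Your sketch of (b) is loose about which terms survive, but the bookkeeping does close as you claim.

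Part (c) is where your proposal has a genuine gap. Your reduction is correct: since $D(g)=0$ and hence $D(g^{end})=0$, the assertion $D^F(g^F)=0$ is equivalent to
\begin{equation*}
g^{end}\bigl(A^F_X(C_Y),C_Z\bigr)+g^{end}\bigl(C_Y,A^F_X(C_Z)\bigr)=0,
\end{equation*}
and you are right that this cannot be disposed of termwise, because $F^{\perp}$ is the $h^{end}$-orthogonal rather than the $g^{end}$-orthogonal complement (indeed, the second identity of Lemma \ref{t4.5}(b) records that $g^{end}(\mathrm{pr}^{F^{\perp}}(\cdot),F)$ does not vanish in general). But you never establish the displayed identity; you list ``ingredients'' and defer the decisive step. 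Worse, the route you propose cannot work as described: taking traces in the formula of part (b) (using $\tr\circ\,\mathrm{pr}^F=\tr$, which holds because $\mathrm{id}=-C_e\in F$) yields only $\tr(C_YA^F_Z(C_X))-\tr(C_XA^F_Z(C_Y))$, i.e.\ the combination \emph{antisymmetric} in $X,Y$, whereas (c) requires the vanishing of the \emph{symmetric} combination $\tr(C_YA^F_Z(C_X))+\tr(C_XA^F_Z(C_Y))$; note that together with the symmetry from (a) in the first and third slots, the latter is equivalent to the vanishing of each individual trace $\tr(A^F_Z(C_X)\,C_Y)$, a statement your ingredients (i)--(ii) do not reach. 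For comparison, the paper settles (c) in one line by asserting $D^F_Z(g^F)(C_X,C_Y)=D_Z(g^{end})(C_X,C_Y)=0$; that assertion is precisely equivalent to the cross-term identity above. So you have correctly isolated the one non-trivial point of (c) that the paper passes over in silence, but your proposal does not prove it, and is therefore incomplete exactly there.
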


{\bf Proof:} 
Part (a) follows by projecting the relation
\begin{equation}\label{pot}
(D(C))(X, Y) = D_{X}(C_{Y}) - D_{Y} (C_{X}) - C_{[X, Y]}=0
\end{equation}
onto $F^{\perp}$.

We now prove part (b). Using $D^{F}_{X} (C_{Y})=\mathrm{pr}^{F}
D_{X} (C_{Y})$ and the
definition of the Higgs field $C^{F}$, we obtain:
\begin{align}
\nonumber&(D^{F}(C^{F}))(X, Y) (C_{Z}) =
(D^{F}_{X} (C^{F}_{Y}) - D^{F}_{Y} (C^{F}_{X}) - C^{F}_{[X,Y]} )(C_{Z})\\
\nonumber&= D^{F}_{X} (C^{F}_{Y}C_{Z}) - C^{F}_{Y}D^{F}_{X}(C_{Z})\\
\nonumber&  - D^{F}_{Y} (C^{F}_{X}C_{Z}) + C^{F}_{X}D^{F}_{Y}(C_{Z})- C_{[X,Y]} C_{Z}\\
\nonumber&= \mathrm{pr}^{F}D_{X} (C_{Y}C_{Z}) - C_{Y}\mathrm{pr}^{F}D_{X}(C_{Z}) \\
\nonumber&-  \mathrm{pr}^{F}D_{Y} (C_{X}C_{Z}) + C_{X}\mathrm{pr}^{F}D_{Y}(C_{Z}) - C_{[X, Y]} C_{Z}\\
\nonumber&= \mathrm{pr}^{F} ( D_{X}(C_{Y}) C_{Z} +
C_{Y}D_{X}(C_{Z})) - C_{Y}
\mathrm{pr}^{F} D_{X}(C_{Z})\\
\label{calcul}& -  \mathrm{pr}^{F} ( D_{Y}(C_{X}) C_{Z} +
C_{X}D_{Y}(C_{Z})) + C_{X} \mathrm{pr}^{F} D_{Y}(C_{Z})- C_{[X,
Y]}C_{Z}.
\end{align}
On the other hand, composing (\ref{pot}) with $C_{Z}$,
$$
D_{X}(C_{Y}) C_{Z} - D_{Y}(C_{X}) C_{Z} - C_{[X, Y]}  C_{Z}=0.
$$
Projecting this relation to $F$ we obtain
$$
\mathrm{pr}^{F} ( D_{X} (C_{Y})C_{Z} -D_{Y}(C_{X})C_{Z}) = C_{[X,
Y]}C_{Z}.
$$
Using this relation, (\ref{calcul}) becomes
\begin{align}
(D^{F}(C^{F}))(X, Y) (C_{Z}) \nonumber&= \mathrm{pr}^{F}
(C_{Y}D_{X}(C_{Z}))
- C_{Y} \mathrm{pr}^{F}(D_{X}(C_{Z}))\\
\label{new}&  - \mathrm{pr}^{F}(C_{X} D_{Y}(C_{Z}))+ C_{X}
\mathrm{pr}^{F} (D_{Y}(C_{Z})).
\end{align}
On the other hand, using (\ref{added}) ,  we can write
\begin{align}
\nonumber&\mathrm{pr}^{F} (C_{Y}D_{X}(C_{Z})) - C_{Y}
\mathrm{pr}^{F}(D_{X}C_{Z})\\
\nonumber&= \mathrm{pr}^{F} (C_{Y}\mathrm{pr}^{F}D_{X}(C_{Z})) +
\mathrm{pr}^{F} (C_{Y}A^{F}_{X}(C_{Z})) - C_{Y}
\mathrm{pr}^{F}(D_{X}
(C_{Z}))\\
\label{r0}&= \mathrm{pr}^{F} (C_{Y} A^{F}_{X}(C_{Z})),
\end{align}
because $C_{Y}\mathrm{pr}^{F} D_{X}(C_{Z})$ is a section of $F$
(so, it coincides with its projection to $F$). In a similar way,
we obtain
\begin{equation}\label{r1}
\mathrm{pr}^{F} (C_{X}D_{Y}(C_{Z})) - C_{X} \mathrm{pr}^{F}
(D_{Y}C_{Z})= \mathrm{pr}^{F} (C_{X} A^{F}_{Y}(C_{Z})).
\end{equation}
From (\ref{new}), (\ref{r0}) and (\ref{r1}),
$$
(D^{F}(C^{F}))(X, Y) (C_{Z}) = \mathrm{pr}^{F} (C_{Y}
A^{F}_{X}(C_{Z})) - C_{X} A^{F}_{Y}(C_{Z})).
$$
Using $A^{F}_{X}(C_{Z})= A^{F}_{Z}(C_{X})$ and $A^{F}_{Y}(C_{Z})=
A^{F}_{Z}(C_{Y})$ we obtain (\ref{changed}). Part (b) is proved.
For part (c), we use $D( g^{end})=0$ (because $D(g)=0$) which implies
$$
D^{F}_{Z} (g^{F}) (C_{X}, C_{Y}) = D_{Z} (g^{end}) (C_{X}, C_{Y}) = 0
$$
as needed.
\hfill $\Box$

\bigskip

The next lemma studies the properties of 
$(C^{F}, h^{F}, g^{F}, \QQ^{F})$  which involve the endomorphism $\QQ^{F}.$

\begin{lemma}\label{t4.5}
(a) For any $X, Y\in {\mathcal T}_{M}$,  
\begin{eqnarray}
\nonumber &D^{F}_{X}({\mathcal U}) + \QQ^{F} (C_{X}) + C_{X} =0,\\
&D^{F}_{X} (\QQ^{F}) (C_{Y}) = D^{F}_{Y} (\QQ^{F}) (C_{X}).\label{suppl}
\end{eqnarray}

(b) For any $X, Y\in {\mathcal T}_{M}$,
\begin{eqnarray}
\nonumber&h^{F} ( \QQ^{F}(C_{X}), C_{Y}) = h^{F} (C_{X},
\QQ^{F}(C_{Y}))\\
\nonumber&g^{F} (\QQ^{F} (C_{X}), C_{Y})  + g^{F} (C_{X},\QQ^{F} (C_{Y}))\\
&= - g^{end} (\mathrm{pr}^{F^{\perp}} \QQ^{end}(C_{X}), C_{Y}) -
g^{end} (\mathrm{pr}^{F^{\perp}}
\QQ^{end}(C_{Y}), C_{X}).\label{suppl-1}
\end{eqnarray}
\end{lemma}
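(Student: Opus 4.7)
My plan is to establish the four identities in Lemma~\ref{t4.5} by direct manipulation of the CV-structure axioms \eqref{2.27}--\eqref{2.32}, combined with (i) the orthogonal decomposition $\mathrm{End}(K) = F \oplus F^\perp$ with respect to $h^{end}$, (ii) the trace descriptions $h^{end}(A, B) = \tr(B^\flat A)$ and $g^{end}(A, B) = \tr(B^* A)$ together with the cyclic property of the trace, and (iii) the fact that $D^F$ is the Chern connection of a Hermitian metric on the holomorphic subbundle $F \subset \mathrm{End}(K)$. I treat the identities in the order (a)(i), (a)(ii), (b)(i), (b)(ii).

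For (a)(i), I project the CV-axiom \eqref{2.30}, namely $D_X \UU = [C_X, \QQ] - C_X = -[\QQ, C_X] - C_X$, onto $F$ via $\mathrm{pr}^F$. Since $\UU = -C_E \in \Gamma(F)$ and $C_X \in F$, one has $\mathrm{pr}^F D_X \UU = D^F_X \UU$, and by the very definition of $\QQ^F$, $\mathrm{pr}^F[\QQ, C_X] = \QQ^F(C_X)$. Adding these yields $D^F_X \UU + \QQ^F(C_X) + C_X = 0$.

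For (a)(ii), I would use (a)(i) to substitute $\QQ^F(C_Y) = -D^F_Y \UU - C_Y$ (and the analogue with $X, Y$ swapped) in the Leibniz expansion and collect terms, obtaining
\begin{align*}
(D^F_X \QQ^F)(C_Y) - (D^F_Y \QQ^F)(C_X)
= &- (D^F_X D^F_Y - D^F_Y D^F_X)\UU \\
  &- (D^F_X C_Y - D^F_Y C_X) - \QQ^F(D^F_X C_Y - D^F_Y C_X).
\end{align*}
Projecting the potentiality identity \eqref{pot} (which holds by \eqref{2.27}) onto $F$ gives $D^F_X C_Y - D^F_Y C_X = C_{[X, Y]}$, and applying (a)(i) to the vector field $[X, Y]$ rewrites the last two terms as $D^F_{[X,Y]}\UU$. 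The right-hand side therefore collapses to $-R^{D^F}(X, Y)\UU$, where $R^{D^F}$ is the curvature of $D^F$. Since $D^F$ is a Chern connection on a holomorphic Hermitian bundle its curvature is of type $(1, 1)$, so $R^{D^F}(X, Y) = 0$ when $X, Y$ are of type $(1, 0)$, which is the case for holomorphic vector fields. This curvature-vanishing reduction is the step I expect to be the main obstacle, as it requires carefully choreographing the projections and the use of (a)(i) so that the non-trivial contributions telescope.

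For (b)(i), the $h^{end}$-orthogonality of $F$ and $F^\perp$ permits dropping the projection: $h^F(\QQ^F(C_X), C_Y) = h^{end}([\QQ, C_X], C_Y)$, and similarly on the other side. Writing $h^{end}(\cdot, \cdot) = \tr((\cdot)^\flat \cdot)$ and using $\QQ^\flat = \QQ$ from \eqref{2.32}, the cyclic property of the trace gives the identity. For (b)(ii), I decompose $\mathrm{pr}^F = \mathrm{id} - \mathrm{pr}^{F^\perp}$ to write
\[
g^F(\QQ^F C_X, C_Y) = g^{end}([\QQ, C_X], C_Y) - g^{end}(\mathrm{pr}^{F^\perp}\QQ^{end}(C_X), C_Y),
\]
and similarly for the swapped term; the symmetry of $g^{end}$ converts the two $\mathrm{pr}^{F^\perp}$-contributions into the right-hand side of \eqref{suppl-1}. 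It remains to show $g^{end}([\QQ, C_X], C_Y) + g^{end}(C_X, [\QQ, C_Y]) = 0$, which, using $g^{end}(\cdot, \cdot) = \tr((\cdot)^* \cdot)$, the $g$-skew-adjointness $\QQ^* = -\QQ$ from \eqref{2.29}, and the cyclic property of the trace, reduces to a direct cancellation.
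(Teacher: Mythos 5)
Your proposal is correct, and its treatment of the second identity in part (a) is genuinely different from the paper's. The paper proves $D^{F}_{X}(\QQ^{F})(C_{Y})=D^{F}_{Y}(\QQ^{F})(C_{X})$ by pairing against an arbitrary $C_{Z}$ with $h^{end}$, feeding in the axiom \eqref{2.31} in the form $D_{X}(\QQ)=-[C_{X},\kappa\UU\kappa]$, using the symmetry $A^{F}_{X}(C_{Y})=A^{F}_{Y}(C_{X})$ of the second fundamental form from Lemma \ref{t4.4}(a), and finishing with the Jacobi identity together with $[C_{X},C_{Y}]=0$. You instead observe that the first identity forces $\QQ^{F}(C_{Y})=-D^{F}_{Y}\UU-C_{Y}$, so the antisymmetrized covariant derivative collapses to $-R^{D^{F}}(X,Y)\UU$ after inserting the projected potentiality $D^{F}_{X}C_{Y}-D^{F}_{Y}C_{X}=C_{[X,Y]}$ and applying the first identity once more to $[X,Y]$; this vanishes because the curvature of the Chern connection of the holomorphic subbundle $(F,h^{F})$ has no $(2,0)$-component (a fact that holds for non-degenerate, not necessarily definite, $h^{F}$). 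Your route is shorter and exhibits the second identity of (a) as a formal consequence of the first together with potentiality and the holomorphicity of $F$; in particular it never invokes \eqref{2.31}, which the paper's computation does use. The remaining parts --- projecting \eqref{2.30} onto $F$ for the first identity of (a), and the adjoint manipulations for (b), where your descriptions $h^{end}(A,B)=\tr(B^{\flat}A)$ and $g^{end}(A,B)=\tr(B^{*}A)$ combined with cyclicity of the trace are exactly the paper's identity $h^{end}([A,B],C)=-h^{end}(A,[B^{\flat},C])$ and its $g$-analogue, together with $\QQ^{\flat}=\QQ$ from \eqref{2.32} and $\QQ^{*}=-\QQ$ from \eqref{2.29} --- coincide with the paper's argument. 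You also correctly keep track of the fact that $F^{\perp}$ is the $h^{end}$-orthogonal complement and not the $g^{end}$-orthogonal one, which is why the $\mathrm{pr}^{F^{\perp}}$ contributions can be dropped in the first identity of (b) but survive in \eqref{suppl-1}.
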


{\bf Proof:} The first relation (\ref{suppl}) follows by
projecting (\ref{2.30}) to $F$. We now prove the second relation
(\ref{suppl}). 
We shall use repeatedly the relation
\begin{align}
\nonumber& X h^{end}(A, B) = h^{end} (D_{X}(A), B) + h^{end}(A, D_{\bar{X}} (B)) \\
\label{deriv-chern}& = h^{end}(D_{X}(A), B),\ X\in {\mathcal T}^{1,0}_{M}.
\end{align}
for any $A\in \Gamma (\mathrm{End}(K))$ and $B\in {\mathcal O}(\mathrm{End}(K))$
(because $D_{\bar{X}} (B) = \bar{\partial}_{\bar{X}} (B) =0$, when $B$ is holomorphic).
Consider now  any $X, Y, Z\in {\mathcal T}_{M}$. Then
\begin{align*}
&h^{end}( D^{F}_{X}(\QQ^{F}) (C_{Y}), C_{Z}) 
= h^{end} (D^{F}_{X}(\QQ^{F}(C_{Y})) - \QQ^{F} D^{F}_{X} (C_{Y}), C_{Z})\\
&= h^{end} (D_{X}(\mathrm{pr}^{F}[\QQ, C_{Y}]) , C_{Z}) - 
h^{end}([ \QQ ,D^{F}_{X}(C_{Y})], C_{Z})\\
\nonumber&= X h^{end}([\QQ, C_{Y}], C_{Z}) 
-h^{end}([\QQ ,D^{F}_{X}(C_{Y})], C_{Z})
\end{align*}
where in the last equality we used
(\ref{deriv-chern}), with  $A:= \mathrm{pr}^{F} [ \QQ , C_{Y}]$ and $B:= C_{Z}.$
Therefore, 
\begin{align}
\nonumber&h^{end}( D^{F}_{X}(\QQ^{F}) (C_{Y}), C_{Z}) 
= X h^{end}([\QQ, C_{Y}], C_{Z}) 
-h^{end}([\QQ ,D^{F}_{X}(C_{Y})], C_{Z})\\
\nonumber&= h^{end}([D_{X}(\QQ), C_{Y}], C_{Z})+ h^{end}( [ \QQ, D_{X}(C_{Y})], C_{Z})
-h^{end}([\QQ ,D^{F}_{X}(C_{Y})], C_{Z})\\
\nonumber&= h^{end}([D_{X}(\QQ), C_{Y}], C_{Z})+ h^{end}(D_{X}(C_{Y}),[\QQ,
C_{Z}]) -
h^{end}(D^{F}_{X}(C_{Y}), [\QQ, C_{Z}])\\
\label{skew-part}& = 
h^{end}([D_{X}(\QQ), C_{Y}], C_{Z})+ h^{end}(A^{F}_{X}(C_{Y}),[\QQ,
C_{Z}])
\end{align}
where in the second line we used  again (\ref{deriv-chern}), with $A:= [ \QQ , C_{Y}]$ and
$B:= C_{Z}$, and afterwards the relation
\begin{equation}\label{prop-h}
h^{end}([A, B], C) = - h^{end}(A, [B^{\flat}, C]),\quad A, B, C\in
\mathrm{End}(K)
\end{equation}
and that $\QQ$ is $h$-hermitian. 
Using that $A^{F}_{X}(C_{Y}) = A^{F}_{Y}( C_{X})$, 
$D_{X}(\QQ) =- [C_{X}, \kappa \mathcal U \kappa ]$
and (\ref{prop-h}) again, we obtain
\begin{align*}
&h^{end}( D^{F}_{X}(\QQ^{F}) (C_{Y})- D^{F}_{Y}(\QQ^{F}) (C_{X}),
C_{Z}) \\
&= h^{end} ([D_{X}( \QQ ), C_{Y}], C_{Z} ) - h^{end} ( [D_{Y}(\QQ ), C_{X}], C_{Z})\\
&=- h^{end} ( [C_{X},\kappa \mathcal U \kappa ] , C_{Y}]  -
[C_{Y},\kappa \mathcal U \kappa ] , C_{X}], C_{Z})
\end{align*}
which is zero (from the Jacobi identity for $C_{X}$, $C_{Y}$,
$\kappa {\mathcal U}\kappa$ and $[C_{X}, C_{Y}]=0$). The second
relation (\ref{suppl}) is proved. Part (a) follows.

We now prove part (b). 
First, we show that  $\QQ^{F}$ is $h^{F}$-hermitian: using 
(\ref{prop-h}) and that $\QQ$ is $h$-hermitian, we can write 
\begin{align}
\nonumber &  h^{F} (\QQ^{F}(C_{X}), C_{Y}) = - h^{end}( [C_{X}, \QQ ],
C_{Y}) =h^{end}(C_{X}, [\QQ , C_{Y}])\\
\label{skew-h}&= 
h^{end}(C_{X},\QQ^{end}(C_{Y}))  =  h^{F}(C_{X}, \QQ^{F}(C_{Y})).
\end{align}
The second relation (\ref{suppl-1}) follows from a similar computation, 
which uses that  $\QQ$ is $g$-skew-symmetric and
$$
g^{end}([A, B], C) = - g^{end}(A, [B^{*}, C]),\quad A, B, C\in
\mathrm{End}(K).
$$
\hfill $\Box$

\bigskip

We now interpret Lemmas \ref{t4.4} and \ref{t4.5} on $TM.$ 
Because of Lemma \ref{t4.4} (b), the potentiality condition $D^{M}(C^{M}) =0$ 
(where $D^{M}$ is the Chern connection of $h^{M}$)
does not hold. Because of Lemma \ref{t4.5} (b), $\QQ^{M}$ is not, in general, $g^{M}$ 
skew-symmetric. But the other statements from these two lemmas translate nicely in terms of the canonical data. 
The following proposition collects various properties
of the canonical data.

\begin{proposition}
\label{canonical-TM} 
Let $(\circ , E, h^{M}, g^{M}, \QQ^{M})$ be the canonical data on $M$.

\medskip

(a) The Chern connection $D^{M}$ of $h^{M}$  preserves $g^{M}$, and, for any $X, Y\in {\mathcal
T}_{M}$,
$$
D^{M}_{X} (Q^{M}) (Y) = D^{M}_{Y} (Q^{M}) (X),\quad  D^{M}_{X}(E)
= Q^{M} (X) + X.
$$
The operator $Q^{M}$ is $h^{M}$-hermitian. In general, it is not
$g^{M}$-skew-symmetric.
\medskip

(b) The CV-metric $h^{M}$ is invariant under the flows of $e$, $\bar{e}$
and $E - \bar{E}.$

\end{proposition}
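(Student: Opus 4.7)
The plan is to deduce (a) by transcribing Lemmas \ref{t4.4} and \ref{t4.5} via the isomorphism $\iota := -C_\bullet : TM \to F$ from (\ref{4.1}), and to prove (b) by direct computation using Chern connection compatibility together with the potentiality condition and (\ref{2.30}).

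First I would observe that $\iota$ is a holomorphic isomorphism of holomorphic bundles, since $C$ is holomorphic and the unfolding condition makes $\iota$ injective with image the holomorphic subbundle $F$. By construction $h^M = \iota^* h^F$, $g^M = \iota^* g^F$, $\QQ^M = \iota^{-1} \QQ^F \iota$, and $C^M = \iota^{-1} C^F \iota$. Since $\iota$ is simultaneously holomorphic and an isometry for the hermitian forms, uniqueness of the Chern connection gives $D^M = \iota^{-1} D^F \iota$. With this dictionary, (a) reads off directly: $D^M g^M = 0$ is Lemma \ref{t4.4}(c); the symmetry $D^M_X \QQ^M(Y) = D^M_Y \QQ^M(X)$ is the second identity of Lemma \ref{t4.5}(a); the formula $D^M_X E = \QQ^M(X) + X$ is the first identity of Lemma \ref{t4.5}(a) together with $\iota(E) = -C_E = \UU$ and $\iota(X) = -C_X$; and the $h^M$-hermiticity of $\QQ^M$ is the first equation of Lemma \ref{t4.5}(b). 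The failure of $g^M$-skew-symmetry of $\QQ^M$ in general is the second equation of the same lemma, whose right hand side involves the $F^\perp$-parts of $\QQ^{end}(C_X)$ and $\QQ^{end}(C_Y)$ and is not in general zero.

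For (b), the core identity for holomorphic vector fields $X, Y$ on $M$ is
\begin{equation*}
e \cdot h^M(X,Y) = h^{end}(D_e C_X, C_Y) + h^{end}(C_X, D_{\bar e} C_Y) = h^{end}(D_e C_X, C_Y),
\end{equation*}
the second term vanishing because $C_Y$ is holomorphic. The potentiality condition $(D(C))(e,X) = 0$ together with $C_e = -\id$ gives $D_e C_X = C_{[e,X]}$, whence $e \cdot h^M(X,Y) = h^M([e,X], Y)$. Viewing $h^M$ as a section of $(T^{1,0}M)^* \otimes (T^{0,1}M)^*$ and noting that $[e, \overline{Y}] = 0$ for holomorphic $e$ and $Y$, the Lie derivative formula reduces to $(L_e h^M)(X, Y) = 0$. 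Swapping $e$ and $\bar e$ (using $[\bar e, X] = 0$ for holomorphic $X$) yields $L_{\bar e} h^M = 0$ by the same argument.

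For $L_{E - \bar E} h^M = 0$, the same strategy applies but one substitutes (\ref{2.30}) in the form $D_X \UU = [C_X, \QQ] - C_X$ to obtain $D_E C_X = -[C_X, \QQ] + C_X + C_{[E,X]}$. Using $h^{end}(\QQ^{end}(C_X), C_Y) = h^F(\QQ^F(C_X), C_Y) = h^M(\QQ^M X, Y)$, which holds because $C_Y \in F$ eliminates the $F^\perp$-contribution, one finds $(L_E h^M)(X, Y) = h^M(\QQ^M X, Y) + h^M(X,Y)$. The parallel computation for $L_{\bar E} h^M$, using the $h^{end}$-self-adjointness of $\QQ^{end}$ (a consequence of $\QQ^\flat = \QQ$), produces the same right hand side; subtraction gives the invariance under the flow of $E - \bar E$. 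The main bookkeeping obstacle throughout is keeping track of the sesquilinear form $h^M$ versus its tensorial incarnation on $T^{1,0}M \otimes T^{0,1}M$ when taking Lie derivatives along complex vector fields, in particular identifying correctly the brackets involving antiholomorphic fields that vanish.
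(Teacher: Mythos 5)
Your proposal is correct and follows essentially the same route as the paper: part (a) is obtained by transporting Lemmas \ref{t4.4} and \ref{t4.5} through the holomorphic isometry $-C_\bullet:TM\to F$, and part (b) is the same computation using $(D(C))(e,X)=0$ with $C_e=-\id$ for the $e$, $\bar e$ flows, and \eqref{2.30} together with the $h^{end}$-self-adjointness of $\QQ^{end}$ (equivalently the $h^F$-hermiticity of $\QQ^F$ from \eqref{suppl-1}) for $E-\bar E$. The only cosmetic difference is that the paper phrases the $E-\bar E$ step via $L_E(h^M)(X,Y)=-h^F(D^F_X(\UU),C_Y)$ and the first relations of \eqref{suppl} and \eqref{suppl-1}, which is the same bookkeeping in different packaging.
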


{\bf Proof: } Part (a)   follows from Lemmas \ref{t4.4} and \ref{t4.5}. We now prove part (b).
For any $X, Y\in {\mathcal T}_{M}$, 
\begin{align*}
L_{e} (h^{M}) (X, Y) & = e h^{M} (X, Y) - h^{M} ( [e, X], Y) - h^{M} ( X, [\bar{e}, Y])\\
&= e h^{M} (X, Y) - h^{M} ( [e, X], Y) \\
&= e h^{end} (C_{X}, C_{Y}) - h^{end}(C_{[e,X]} , C_{Y})\\
&= h^{end} (D_{e}(C_{X})- C_{[e, X]} , C_{Y}) =0,
\end{align*}
where we used $[\bar{e}, Y ]=0$ and 
$D_{e} (C_{X}) =C_{[e, X]}$ (the latter follows from $(D(C))(e, X)=0$ and $C_{e} = -\sf id $). 
In a similar way we obtain
$$
L_{\bar{e}}( h^{M})(X, Y) = h^{end} (C_{X}, D_{e}(C_{Y})  - C_{[e,Y]}) =0.
$$
It remains to prove that $L_{E- \bar{E}}(h^{M})=0.$ Using that
$$
D_{E}(C_{X}) - C_{[E, X]} = D_{X} (C_{E}) = - D_{X}(\UU )
$$
we obtain, by similar computations,
\begin{align*}
L_{E} (h^{M}) (X, Y)& = - h^{end} (D_{X}(\UU ), C_{Y}) =- h^{F} (D_{X}^{F}(\UU ), C_{Y})\\
L_{\bar{E}} (h^{M}) (X, Y)& = - h^{end} (C_{X}, D_{Y} (\UU )) =  - h^{F} (C_{X}, D^{F}_{Y} ( \UU ) ),
\end{align*}
for any $X, Y\in {\mathcal T}_{M}.$ 
We conclude from the first relation (\ref{suppl}) and the first relation (\ref{suppl-1}). 
\hfill $\Box$

\bigskip

For the proof of the next theorem we need to recall the relation between the curvatures of a  hermitian holomorphic vector bundle and a holomorphic subbundle, in terms of the second
fundamental form. Recall that   if $F\subset V$ is a holomorphic subbundle 
of a hermitian holomorphic vector bundle $(V, h^{V})$, such that $h^{V}$
restricts to a (non-degenerate) hermitian metric $h^F$ on $F$, then the curvatures $R^{V}$ and $R^{F}$ of the Chern connections $D^{V}$ and $D^{F}$ 
of $h^{V}$ and $h^{F}$ are related by (see e.g. \cite[Ch. I (6.12)]{Ko87}):
\begin{eqnarray}\label{4.3}
R^F=\mathrm{pr}^{F} \circ R^V+(A^F)^{\flat}\land A^F,
\end{eqnarray}
where $(A^F)^{\flat}\in \Omega^{0,1}(\mathrm{Hom} (F^{\perp}, F)) $ is the hermitian dual of $A^F\in \Omega^{1,0} (\mathrm{Hom}(F, F^{\perp}))$, given by 
$$h^{V}(A^F_{X} s,\tilde{s})=h^{V}(s,(A^F)_{\bar{X}}^{\flat}\tilde{s})\quad s\in \Gamma (F),\ 
\tilde{s}\in \Gamma (F^\perp),\ X\in {\mathcal T}^{1,0}_{M}.
$$
Consider now the special case when  $V:= \mathrm{End}(K)$ 
is an endomorphism bundle and $h^{V} := h^{end}$ is the natural extension of a hermitian metric $h$ on $K$ to
$V$. Suppose that 
the restriction $h^{F}$ of $h^{V}$ to $F$  is non-degenerate. The curvature $R^{end}$ of $h^{end}$ is naturally related to the curvature $R^{D}$ of $h$ by 
\begin{equation}\label{hom-curv}
R^{end}(X,\bar{Y})(A)= [ R^{D}(X,\bar{Y}), A],\quad X, Y\in {\mathcal T}^{1,0}_{M}.
\end{equation}
From  (\ref{prop-h}),  (\ref{4.3}) and (\ref{hom-curv}),  the curvature $R^F$ of $h^{F}$ is given by
\begin{align}
\nonumber  h^F(R^F(X,\oooo X)(B),B)&=h^{end}(R^{D}(X,\oooo X),[B,B^\flat ])\\
\label{4.4}&  -h^{end}(A^F_X(B),A^F_X(B)),
\end{align}
for any  $X\in {\mathcal T}_{M}^{1,0}$ and $B\in \Gamma (F).$  

\medskip 

Our main result from this section is the following.

\begin{theorem}\label{t4.6}
Let $(K\to M,C,\UU,g,\kappa,h,D,\QQ)$
be a $CV\oplus$-structure with  the unfolding condition.
Then the hermitian forms $h^{end},h^F$ and $h^M$ are automatically positive definite.
Let $(\circ , e)$ be the multiplication and the unit field of the  induced $F$-manifold structure on $M$.

\medskip

(a) The holomorphic sectional curvature  
$R^{sect}$ of $h^{M}$ is non-positive,
\begin{eqnarray}\label{4.7}
R^{sect}(X):=\frac{h^M(R^M(X,\bar{X})X,X)}{h^M(X,X)^{2}}
\leq 0,\quad X\in TM\setminus \{ 0 \}
\end{eqnarray}
and vanishes in the direction of  $e$:
\begin{eqnarray}\label{4.8}
R^{sect}(e)=0.
\end{eqnarray}

(b) Let $t\in M$ be such that $(T_{t}M, \circ_{t} ,e_{t})$ is an irreducible algebra. The decomposition 
$T_{t}M = \mathbb{C} e_{t} \oplus I^{max} (T_{t}M)$ of $T_{t}M$ into 
the line $\mathbb{C}e_{t}$ and the maximal ideal 
$I^{max}(T_{t}M)$, extends, in a neighbourhood $V$ of $t$, to a decomposition 
\begin{equation}\label{dec-hyp}
T_{\tilde{t}} M =\mathbb{C} e_{\tilde{t}}  + U_{\tilde{t}},\quad  \mathbb{C} e_{\tilde{t}} \cap U_{\tilde{t}} = \{  0\},
\end{equation}
with the following properties:

(i) for any $\tilde{t}\in V$, $U_{\tilde{t}}$ is a cone of  $T_{\tilde{t}}M$ (i.e.  $r\cdot U_{\tilde{t}} \subset U_{\tilde{t}}$,
 for any $r\geq 0$) and 
$I^{max} (T_{t}M)$ is included  in $U_{t}$;\\

(ii) there is $k_{0}> 0$ such that
\begin{equation}\label{h1}
R^{sect} (X) < - k_{0},\quad X\in U_{\tilde{t}}\setminus \{ 0 \},\  \tilde{t}\in V.
 \end{equation}

(iii) for any $\lambda_{0} >0$, there is $k_{1} >0$ such that
 \begin{equation}\label{h2}
 R^{sect} (X) < -k_{1}, 
 \end{equation}
 for any $X\in T_{\tilde{t}}V$, with $X - e_{\tilde{t}}\in U_{\tilde{t}}\setminus \{ 0\}$ and
 $h^{M} (X- e_{\tilde{t}},X-e_{\tilde{t}})> \lambda_{0} h^{M}(X, X).$
 \end{theorem}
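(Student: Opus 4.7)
My plan is to transport the subbundle-curvature identity~\eqref{4.4} to $TM$ through the holomorphic isometry $-C_\bullet:(TM,h^M)\to(F,h^F)$, and to use~\eqref{alt-curv} to eliminate the Higgs-bundle curvature. Since $C_X$ is $g$-symmetric by~\eqref{2.9}, we have $C_X^\flat=\kappa C_X\kappa$; plugging $B=-C_X$ into~\eqref{4.4} and inserting $R^D(X,\bar X)=-[C_X,\kappa C_X\kappa]$ from~\eqref{alt-curv}, a direct computation should yield
\begin{equation}\label{plankey}
h^M(R^M(X,\bar X)X,X)\;=\;-\bigl\|[C_X,\kappa C_X\kappa]\bigr\|^2_{h^{end}}\;-\;\bigl\|A^F_X(C_X)\bigr\|^2_{h^{end}}.
\end{equation}
The right-hand side is manifestly non-positive (since $h$ positive definite implies $h^{end}$ positive definite), which proves~\eqref{4.7}. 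For $X=e$ the equalities $C_e=-\id$ and $D(C_e)=0$ make both terms vanish, giving~\eqref{4.8}. Thus part~(a) follows immediately from~\eqref{plankey}.

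The algebraic heart of~(b) is the observation that $[C_Y,\kappa C_Y\kappa]=[C_Y,C_Y^\flat]=0$ means $C_Y$ is $h$-normal, and a nilpotent $h$-normal endomorphism of the $h$-positive-definite fibre $K_t$ is zero. When $(T_tM,\circ_t,e_t)$ is an irreducible algebra it is local (over $\C$), so $I^{max}(T_tM)$ consists of all its nilpotent elements; hence for every $0\neq Y\in I^{max}(T_tM)$ the endomorphism $C_Y$ is nilpotent and, by the unfolding condition, nonzero. Therefore $[C_Y,\kappa C_Y\kappa]\neq 0$ and~\eqref{plankey} gives $R^{sect}(Y)<0$ on $I^{max}(T_tM)\setminus\{0\}$. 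Compactness of the $h^M$-unit sphere of $I^{max}(T_tM)$ upgrades this to a uniform bound $R^{sect}(Y)\leq -2k_0$. To obtain (i) and (ii), pick any holomorphic subbundle $U\to V$ of $TM|_V$ with $U_t=I^{max}(T_tM)$, transverse to $e$ throughout $V$ (possible because $I^{max}(T_tM)$ is a hyperplane complementary to $\C e_t$); continuity of $R^{sect}$ on the unit-sphere bundle of $TM|_V$, combined with the uniform bound at $t$, then yields (after shrinking $V$) $R^{sect}(Y)<-k_0$ on $U_{\tilde t}\setminus\{0\}$ for every $\tilde t\in V$, each $U_{\tilde t}$ being a complex-linear subspace (hence a cone).

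For~(iii) the crucial point is that the right-hand side of~\eqref{plankey} depends only on $Y:=X-e_{\tilde t}$. Indeed, $C_e=-\id$ is central in $\End(K)$, so $[C_X,\kappa C_X\kappa]=[C_Y,\kappa C_Y\kappa]$; and $D(C_e)=0$ combined with the symmetry~\eqref{4.6} forces $A^F_X(C_e)=A^F_e(C_X)=0$ for all $X$, so that bilinear expansion gives $A^F_X(C_X)=A^F_Y(C_Y)$. Therefore $h^M(R^M(X,\bar X)X,X)=h^M(R^M(Y,\bar Y)Y,Y)$, and consequently
\[
R^{sect}(X)=R^{sect}(Y)\cdot\Bigl(\tfrac{h^M(Y,Y)}{h^M(X,X)}\Bigr)^{\!2}\;<\;-k_0\lambda_0^2\;=:\;-k_1
\]
whenever $Y\in U_{\tilde t}\setminus\{0\}$ and $h^M(Y,Y)>\lambda_0\,h^M(X,X)$. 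The main obstacle I anticipate is the careful sign/adjoint bookkeeping leading to~\eqref{plankey}: one must verify that the subbundle-curvature formula~\eqref{4.4} applies cleanly to $F\subset\End(K)$ with the induced holomorphic and hermitian structures, and that the substitution $B=-C_X$ together with the relations $B^\flat=-\kappa C_X\kappa$ and $[B,B^\flat]=[C_X,\kappa C_X\kappa]$ indeed reproduces the squared norm of $[C_X,\kappa C_X\kappa]$ from the first term of~\eqref{4.4}.
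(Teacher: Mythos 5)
Your proposal is correct. For part (a) it coincides with the paper's argument: your key identity for $h^M(R^M(X,\bar X)X,X)$ is exactly the paper's \eqref{ajut-curv}, obtained from \eqref{4.4} and \eqref{alt-curv}, and the vanishing in the direction of $e$ follows, as in the paper, from $C_e=-\id$ and $A^F_e=0$. For part (b) the core negativity mechanism is also the same (for $0\neq Y\in I^{max}(T_tM)$ the endomorphism $C_Y$ is nilpotent and nonzero, hence not $h$-normal, so the commutator term is strictly negative; compactness then gives a uniform bound), but your construction of $V$ and of the cones $U_{\tilde t}$ genuinely differs from the paper's. The paper fixes a frame of $K_t$, works in the matrix space $M(n\times n,\C)$, and takes $U_{\tilde t}$ to be the preimage under $X\mapsto C^v_X$ of an open conical neighbourhood $\R^{\geq 0}\cdot U_1$ of the symmetric nilpotent matrices; this produces an \emph{open} cone strictly containing a linear complement of $e$, so that the directions not controlled by (ii) and (iii) form only a small conical neighbourhood of $e$. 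You instead take $U_{\tilde t}$ to be a linear subbundle extending $I^{max}(T_tM)$ and run the compactness argument on the unit-sphere bundle; this is simpler and satisfies the theorem as literally stated (a linear subspace is a cone, and (i) only requires $I^{max}(T_tM)\subset U_t$ at the centre point $t$), but it yields a thinner cone than the paper's, which matters for the ``almost hyperbolic'' interpretation in Remark \ref{t4.7}. Finally, in (iii) you observe that the whole curvature numerator, not just the commutator term, is invariant under $X\mapsto X-e_{\tilde t}$, using $A^F_X(C_e)=A^F_e(C_X)=0$ together with the symmetry \eqref{4.6}; the paper only exploits the invariance of the commutator term and discards the non-positive second-fundamental-form term. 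Both variants are valid.
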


{\bf Proof:} 
Using \eqref{4.4},  the $tt^*$-equation (\ref{alt-curv})  and 
$C_{\bar{X}}^{\flat}= \kappa C_{X}\kappa$, we can write
\begin{align}
\nonumber& h^M(R^M(X,\bar{X})X,X) = h^F(R^F(X,\bar{X})C_X,C_X)\\
\nonumber&=h^{end}(R^{D}(X,\bar{X}),[C_X, C_{\bar{X}}^\flat ])-h^{end}(A^F_X(C_X),A^F_X(C_X))\\
\label{ajut-curv}&=-h^{end}([C_X,C_{\bar{X}}^{\flat} ],[C_X,C_{\bar{X}}^\flat ]) -h^{end}(A^F_X(C_X),A^F_X(C_X)).
\end{align}
Relation 
(\ref{4.7}) follows. We now prove (\ref{4.8}). 
From $C_{e} = - \sf id$,  we need to show that $A^{F}_{e}=0.$ 
But  from the proof of Proposition 
\ref{canonical-TM},  $D_{e} (C_{X}) = C_{[e, X]}$. We obtain that
 $D_e(C_X)=D^F_e(C_X)$ and hence $A^F_e(C_X)=0$, as needed.
Part (a) is proved.

We now prove part (b).  For this, we need  to introduce some notation. 
Let $t\in M$ be such that $(T_{t}M , \circ_{t} , e_{t})$ is irreducible.
We choose a basis  
$\uuuu v=(v_1,...,v_n)$ of  $K_t$, in which 
$\kappa$, $g$ and $h$ take the standard form: $\kappa(\uuuu
v)=\uuuu v$,  $g(\uuuu{v}^t,\uuuu{v})={\bf 1}_n$ and 
$h(\uuuu{v}^t,\uuuu{v})={\bf 1}_n$. 
We extend this basis to  a basis of local sections 
$\uuuu{v}^{ext} = (v_{1}^{ext}, \cdots , v_{n}^{ext} )$  of $K$ on a neighbourhood  $\tilde{V}$ of $t$.
For any $X\in T_{\tilde{t}}M$ ($\tilde{t}\in \tilde{V}$)  we define the matrix 
$C^v_X$, by
$C_X (\uuuu{v}^{ext})=\uuuu{v}^{ext}\cdot C^v_X$,
i.e. $C_{X} (v_{i}^{ext} )= \sum_{j} (C_{X}^{v})_{ji} v_{j}^{ext}.$
When $X\in T_{t}M$,   
$C_{X}^{v}$ is symmetric, and, when  $X\in I^{max} (T_{t}M)$,  it is also nilpotent. 
We define the map 
$$
F: \tilde{V}\times  (M(n\times n, \mathbb{C})\setminus \{ 0\} ) \rightarrow \mathbb{R} ,\quad F(\tilde{t}, A):= 
-\frac{h^{end}_{\tilde{t}}([A,A^\flat ],[A,A^\flat ])}
{h^{end}_{\tilde{t}}(A,A)^{2}} ,
$$
where we identified $\mathbb{C}^{n}$  with $K_{\tilde{t}}$ 
using $\uuuu{v}^{ext}_{\tilde{t}}$. 
Thus $h_{\tilde{t}}$, $h_{\tilde{t}}^{end}$ are seen as  hermitian metrics on
$\mathbb{C}^{n}$ and $M(n\times n, \mathbb{C})$ respectively and the  superscript 
'$\flat$' denotes $h_{\tilde{t}}$ adjoint; in particular, $A^{\flat } = \bar{A}^{t}$ with respect to  $h_{t}$;
also $h^{end}_{t} (A, B) = \mathrm{tr} (A\bar{B}^{t})$, for any $A, B\in M(n\times n, \mathbb{C})$.
We denote by $\rho$ the restriction  $F\vert_{ \{ t \} \times ( M(n\times n, \mathbb{C})\setminus \{ 0\} )}$. It is given by
$$
\rho : M(n\times n, \mathbb{C})\setminus \{ 0\} \rightarrow \mathbb{R},\quad
\rho (A) = F(t, A) = - \frac{\textup{tr}   ([A,\bar{A}^{t}]\cdot\overline{[A,\bar{A}^{t} ] }^{t})} {\text{tr}(A\cdot   \bar{A}^{t} )^{2} }. 
$$
In order to define $V$ and the decomposition (\ref{dec-hyp}),  let
\begin{align*}
&\AAA:=\{A\in M(n\times n,\C),\ A\textup{ is symmetric and nilpotent}\},\\
&N:=\{A\in M(n\times n,\C),\  \textup{tr}(A\cdot\oooo{A}^t)=1\} .
\end{align*}
Remark that $\AAA \cap N$ is compact, $\AAA = \mathbb{R}^{\geq 0}\cdot   (\AAA\cap N )$ and
$\rho\vert_{\AAA\setminus \{ 0 \}}$ takes only negative values (if 
$\rho (A)=0$ then $[A, \bar{A}]=0$, so $\mathrm{Re}(A)$ and $\mathrm{Im}(A)$ commute; being symmetric, they are simultaneously 
diagonalisable; thus $A$ is diagonalisable and, being nilpotent, $A=0$;
this argument can be found in Lemma 4.2 of 
\cite{HS08}). Since $\AAA \cap N$ is compact, 
$F\vert_{ \{ t \} \times (\AAA \cap N)}= \rho\vert_{\AAA \cap N}$, is bounded from above by a negative number $- k_{0}$ 
(with $k_{0}>0$).  Choose an open neighborhood $\tilde{V}_{1}\times U_{1}$ of $\{ t \} \times (\AAA \cap N)$
in $\tilde{V}\times ( M(n\times n, \mathbb{C} )\setminus \{  0 \} )$ 
such that $F\vert_{ \tilde{V}_{1}\times U_{1}}$ is still bounded from above by  $- k_{0}$.
In particular, $U_{1}$ does not contain multiples of the identity matrix. 

Restricting $\tilde{V}_{1}$ if necessary, we extend  $I^{max} (T_{t}M)$ to a subbundle $\mathcal D \rightarrow \tilde{V}_{1}$ of $T\tilde{V}_{1}$, 
complementary to the line bundle generated by $e$.  We define  
\begin{align*}
V:=\{ \tilde{t} \in\tilde{V}_{1},\ C^v_X\in  \mathbb{R}^{>0} \cdot {U}_{1}\quad \forall  X\in {\mathcal D}_{\tilde{t}} \setminus \{ 0 \} \} 
\end{align*}
and we claim that it is a neighbourhood of $t$. To prove the claim, let $\{ s_{1}, \cdots , s_{n-1}\}$ be a basis of local sections of $\mathcal D$.
Define the map
$$
G : \tilde{V}_{1} \times (\mathbb{C}^{n}\setminus \{0  \} )\ni (\tilde{t}, X^{1}, \cdots , X^{n}) \rightarrow C^{v}_{\sum_{i} X^{i} s_{i}(\tilde{t}) } \in M(n\times n,\mathbb{C}).
$$ 
It satisfies $G(\tilde{t}, \lambda X^{1}, \cdots , \lambda X^{n} ) = \lambda G(\tilde{t}, X^{1}, \cdots ,  X^{n} )$, for any $\lambda , X^{i} \in \mathbb{C}$
(in fact, we only need $\lambda > 0$), and the image 
of its restriction to $\{ t\} \times  (\mathbb{C}^{n} \setminus \{ 0 \} )$ is included in $\mathbb{R}^{>0} \cdot  U_{1}$ 
(because ${\mathcal D}_{t} = I^{max}(T_{t}M)$, and, for any
$X\in I^{max} (T_{t}M)\setminus \{ 0 \}$, $C_{X}^{v} \in \mathbb{R}^{>0} \cdot 
({\mathcal A}\cap N ) \subset \mathbb{R}^{>0} \cdot U_{1}$).
A continuity argument shows that $V$ is a neighbourhood of
$t$.    We define $U_{\tilde{t}}$ as the set of all vectors $X\in T_{\tilde{t}} M$ such that $C^{v}_{X}\in\mathbb{R}^{\geq 0}\cdot {U}_{1}.$ 
One can easily see that decomposition (\ref{dec-hyp}) holds and has the required properties (i).

It remains to prove (ii) and (iii). 
The key observation in the proof of these statements is that 
$R^{sect}(X)$ (for any  $X\in T_{\tilde{t}} M \setminus \{ 0\}$) 
is the sum of a non-positive term and of 
$F(\tilde{t}, C^{v}_{X})$ (this follows from relation  (\ref{ajut-curv}) and the definition of $R^{sect}(X)$). 
For any $X\in U_{\tilde{t}}\setminus \{ 0 \}$, $C_{X}^{v}\in \mathbb{R}^{>0} \cdot U_{1}$. We obtain that 
$F(\tilde{t}, C^{v}_{X})$ 
(hence also $R^{sect}(X)$) 
is bounded from above by $-k_{0}.$ Thus (\ref{h1}) holds.
Claim (ii) follows.

Consider now $X:= e_{\tilde{t}} + Y\in T_{\tilde{t}} M$, where $Y\in U_{\tilde{t}}$.
Then
$$
F(\tilde{t}, C_{X}^v)  =  F(\tilde{t}, C_{Y}^{v} ) 
\cdot 
\frac{  h^{end}_{\tilde{t}} (C_{Y}^{v}, C_{Y}^{v})^{2}} {  h^{end} _{\tilde{t}} (C_{X}^{v}, C_{X}^{v})^{2}} .
$$
From (ii), the first factor from the r.h.s.  of this relation is bounded by $- k_{0}$. In the hypothesis of  (iii), the second factor is bigger than
$\lambda_{0}^{2} >0.$ Claim (iii) follows with $k_{1} := k_{0}\lambda_{0}^{2}.$
\hfill$\Box$

\medskip

\begin{remarks}\label{t4.7}
(i) The open neighborhood $V\subset M$ of the set
$\{t\in M\, |\, (T_tM,\circ_{t}, e_{t})\textup{ is irreducible}\}$
is almost a hyperbolic manifold. Only the direction of
the unit field with $R^{sect}(e)=0$ disturbs this.
Thus any smooth hypersurface in $M$ which is transversal to $e$
is a hyperbolic manifold. A related, but weaker situation turns up in the classifying
spaces of polarized Hodge structures. They are hyperbolic
in the horizontal directions \cite{GSch69}.
This has been generalized to classifying spaces for {\it
Brieskorn lattices} \cite{HS08}. A weaker property here 
(compared to \cite{GSch69,HS08}) is that $M$ and $V$ are representatives
of germs and not some global manifolds. 

\medskip

(ii) The main family of examples which we have in mind
comes from singularity theory.
Fix a holomorphic function germ $f:(\C^{n+1},0)\to (\C,0)$ with
an isolated singularity at 0 and Milnor number $\mu\in\N$.
The base space $M\subset \C^\mu$ of a universal
unfolding $F$ is a neighborhood of 0 in $\C^\mu$.
There is a natural $(TERP)(n+1)$-structure with unfolding condition
above $M$ (see \cite{Sa02}, \cite[ch. 8]{He03}, \cite{DS03}).
It comes from the Fourier-Laplace transform of the Gau{\ss}-Manin
connection of $F$. The $(TERP)$-structure
is not necessarily pure or polarized, so that one might not
obtain a $CV$-structure or just one with hermitian metric
$h$ with signature. But it turns out \cite[Corollary 11.4]{HS07}
that for any sufficiently big $r\in\R_{>0}$,
the $(TERP)(n+1)$-structure
of $r\cdot f$ is pure and polarized, so that one obtains
a $CV\oplus$ structure on the bundle $K(r\cdot f)\to M(r\cdot f)$.
Except from this good change of the $(TERP)$-structure,
this multiplying of $f$ by a scalar $r$ is harmless.
It does not change the topology and most invariants
of the singularity $f$.
Discriminant and caustic in $M$ are only rescaled. Therefore for most purposes, we can suppose that the base space
$M$ comes equipped with a natural $CV\oplus$-structure
with the unfolding condition.

\medskip

(iii) Even the mere existence of a {\it canonical} 
positive definite hermitian metric on the base space $M$
of a universal unfolding of a singularity,
has not been observed before.
It might become a very useful fact. 

\medskip
(iv) The base space $M$ of a universal unfolding of a singularity
comes with two interesting hypersurfaces, the discriminant
\begin{eqnarray}\label{4.10}
\DD=(\det E\circ)^{-1}(0)\subset M
\end{eqnarray}
and the caustic 
\begin{eqnarray}\label{4.11}
\KK=\{t\in M\, |\, (T_tM,\circ)\textup{ is irreducible}\}
=(\det H^{op}\circ)^{-1}(0).
\end{eqnarray}
Here $H^{op}\in\TT_M$ is a {\it socle field} (see \cite[14.1]{He02}
for the definition of socle fields), and  is not unique.
Its choice corresponds to a choice of a multiplication
invariant non-degenerate bilinear form on $TM$.
In the cases of the ADE-singularities, the complements
$M-\DD$ and $M-\KK$ are $K(\pi,1)$-spaces.

In 2006 Javier Fernandez de Bobadilla asked the second
author whether there are natural, positive definite, hermitian
metrics on $M-\DD$ and $M-\KK$, with  good properties which would
allow to prove that $M-\DD$ and $M-\KK$ 
are $K(\pi,1)$-spaces.  The following twist $h^{M,\DD}$ on $TM_{|M-\DD}$ is a canonical
hermitian metric on $M-\DD$, and the twist $h^{M,\KK}$ on $TM_{|M-\KK}$ 
(which depends on the choice of $H^{op}$) is at least a natural
hermitian metric on $M-\KK$,
\begin{eqnarray}\label{4.12}
h^{M,\DD}(X,Y)&:=& h^M((E\circ)^{-1}X,(E\circ)^{-1}Y)\\
h^{M,\KK}(X,Y)&:=& h^M((H^{op})^{-1}X,((H^{op})^{-1}Y).\label{4.13}
\end{eqnarray}
We expect that $h^{M,\DD}$ is complete on $U-\DD\subset M-\DD$ where
$U$ is a sufficiently small compact neighborhood in $M$ of a point on $\DD$,
and similarly for $h^{M,\KK}$ on $M-\KK$.
Unfortunately, the proof of Theorem \ref{t4.6}
does not give a negative sectional curvature $R^{sect}(\xi)<0$
for almost all directions for these twisted metrics.
Therefore Fernandez de Bobabilla's question is still open.

\medskip
(v) In the singularity case,
the almost hyperbolicity of $h^M$ on $M$ might be
useful for proving the conjecture about the variance
of the spectral numbers of a singularity
\cite[Conjecture 14.8]{He02}.
\end{remarks}

\end{document}